\theoremstyle{plain}
\newtheorem{theorem}{Theorem}[section]
\newtheorem{claim}[theorem]{Claim}
\newtheorem{conjecture}[theorem]{Conjecture}
\newtheorem{proposition}[theorem]{Proposition}
\newtheorem{corollary}[theorem]{Corollary}
\newtheorem{lemma}[theorem]{Lemma}
\theoremstyle{definition}
\newtheorem{problem}[theorem]{Problem}
\newenvironment{remark}[1][Remark.]{\begin{trivlist}
		\item[\hskip \labelsep {\bfseries #1}]}{\end{trivlist}}
\newcommand{\Rea}{{\mathbb R}}
\DeclareMathOperator{\lk}{lk}
\begin{document}
	\title{Complexes of graphs with bounded independence number}
    \author{Minki Kim\thanks{\href{mailto:kimminki@campus.technion.ac.il}{kimminki@campus.technion.ac.il}. M.K. was supported by ISF grant no. 2023464 and BSF grant no. 2006099.} }
    
    \author{Alan Lew\thanks{\href{mailto:alan@campus.technion.ac.il}{alan@campus.technion.ac.il}. A.L. was supported by ISF grant no. 326/16.}}
	\affil{Department of Mathematics, Technion, Haifa 32000, Israel}

	\date{}
	\maketitle

\begin{abstract}
Let $G=(V,E)$ be a graph and $n$ a positive integer. Let $I_n(G)$ be the abstract simplicial complex whose simplices are the subsets of $V$ that do not contain an independent set of size $n$ in $G$. We study the collapsibility numbers of the complexes $I_n(G)$ for various classes of graphs, focusing on the class of graphs with maximum degree bounded by $\Delta$. As an application, we obtain the following result:

Let $G$ be a claw-free graph with maximum degree at most $\Delta$. Then, every collection of $\left\lfloor\left(\frac{\Delta}{2}+1\right)(n-1)\right\rfloor+1$ independent sets in $G$ has a rainbow independent set of size $n$.
	
\end{abstract}

\section{Introduction}

Let $X$ be a simplicial complex and $d$ a non-negative integer.
A face $\sigma$ that is contained in a unique maximal face $\tau$ of $X$ is called a \emph{free face} of $X$.
 If $\sigma$ is a free face of $X$ with $|\sigma| \le d$, we say that the complex
 \[
 X'= X\setminus \{ \eta\in X:\, \sigma\subset\eta\subset\tau\}
 \]
 is obtained from $X$ by an \emph{elementary $d$-collapse}, and we write
 $
 X\xrightarrow{\sigma} X'.
 $
 A complex $X$ is called $d$-\emph{collapsible} if there exists a sequence of elementary $d$-collapses
 \[
 X=X_1\xrightarrow{\sigma_1} X_2 \xrightarrow{\sigma_2} \cdots \xrightarrow{\sigma_{k-1}} X_k=\emptyset
 \]
 reducing $X$ to the void complex $\emptyset$.

We define the {\em collapsibility number }   $C(X)$ as the minimum integer $d$ such that $X$ is $d$-collapsible.

Let $G=(V,E)$ be a (simple, undirected) graph.
A vertex subset $I \subset V$ is called an {\em independent set} in $G$ if no two vertices in $I$ are adjacent in $G$. 
The {\em independence number} of $G$, denoted by $\alpha(G)$, is the maximal size of an independent set in $G$. For $U\subset V$, we denote by $G[U]$ the subgraph of $G$ induced by $U$.
For every integer $n\geq 1$, we define the simplicial complex
\[
    I_{n}(G)=\{ U\subset V: \, \alpha(G[U])<n\}.
\]
For example, $I_2(G)$ is the clique complex of $G$, i.e. $U \in I_2(G)$ if and only if $G[U]$ is a complete graph. For any graph $G$, the complex $I_1(G)$ is just the empty complex $\{\emptyset\}$.

In this paper we study the collapsibilty numbers of the complexes $I_n(G)$, for several classes of graphs.
Our main motivation is the following problem, presented by Aharoni, Briggs, Kim and Kim in \cite{ABKK}:

Let $\mathcal{F} = \{A_1,\ldots,A_m\}$ be a family of (not necessarily distinct) non-empty subsets of some finite set $V$. For a positive integer $n \leq m$, a {\em rainbow set} of size $n$ for $\mathcal{F}$ is a set of $n$ distinct elements in $V$ of the form $\{a_{i_1},\ldots,a_{i_n}\}$, where $1 \leq i_1 < i_2 < \cdots < i_n \leq m$ and $a_{i_j} \in A_{i_j}$ for each $j \leq n$.

Let $G$ be a graph, and let $\mathcal{F}$ be a finite family of independent sets in $G$. A {\em rainbow independent set} in $G$ with respect to $\mathcal{F}$ is a rainbow set for $\mathcal{F}$ that forms an independent set in $G$. For a positive integer $n$, let $f_G(n)$ be the minimum integer $t$ such that every collection of $t$ independent sets of size $n$ in $G$ has a rainbow independent set of size $n$.
For a graph class $\mathcal{G}$ and a positive integer $n$, let 
\[
f_\mathcal{G}(n)= \sup_{G\in \mathcal{G}} f_G(n).
\]

The connection between the complexes $I_n(G)$ and the parameters $f_G(n)$ is given by the following version of Kalai and Meshulam's ``topological colorful Helly theorem'':

\begin{theorem}[Kalai and Meshulam \cite{KM2005}]\label{km}
	Let $X$ be a $d$-collapsible simplicial complex on vertex set $V$, and let
	$
	X^c=\{\sigma\subset V:\, \sigma\notin X\}.
	$	
	Then, every collection of $d+1$ sets in $X^c$ has a rainbow set belonging to $X^c$.
\end{theorem}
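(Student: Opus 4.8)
The displayed statement is the Kalai--Meshulam colorful Helly theorem, and since we only need the $d$-collapsible case (not the more general $d$-Leray setting that the original homological proof handles), the plan is to give a direct combinatorial induction on the number of faces of $X$. Throughout, by a \emph{rainbow set} for the collection $\sigma_1,\dots,\sigma_{d+1}\in X^c$ I mean a set $R=\{a_i:i\in J\}$ with $J\subseteq\{1,\dots,d+1\}$, $a_i\in\sigma_i$, and the $a_i$ pairwise distinct, and the goal is to produce such an $R$ lying in $X^c$. Two closure facts will be used repeatedly: $X^c$ is upward closed (supersets of non-faces are non-faces), and since each $\sigma_i$ is a non-face while every subset of a face is a face, no $\sigma_i$ is contained in any face of $X$; in particular, if $\tau\in X$ then $\sigma_i\not\subseteq\tau$, so each $\sigma_i$ contains an element $b_i\in\sigma_i\setminus\tau$. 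The base case of the induction is $X=\emptyset$ (the void complex), where $X^c$ is all of $2^V$ and $\{a_1\}$ for any $a_1\in\sigma_1$ already works.

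For the inductive step, assume $X$ is nonvoid and $d$-collapsible. Then its collapsing sequence begins with an elementary $d$-collapse at a free face $\rho$ with $|\rho|\le d$, contained in its unique maximal coface $\tau$; writing $X'=X\setminus\{\eta:\rho\subseteq\eta\subseteq\tau\}$, the complex $X'$ is again $d$-collapsible (the remainder of the sequence collapses it to the void complex) and has strictly fewer faces. The sets $\sigma_1,\dots,\sigma_{d+1}$ remain non-faces of $X'\subseteq X$, so the inductive hypothesis yields a rainbow set $R\in (X')^{c}$. If $R\in X^c$ we are done. Otherwise $R\in X\setminus X'=\{\eta:\rho\subseteq\eta\subseteq\tau\}$, so $\rho\subseteq R\subseteq\tau$ and $R$ is a face of $X$ that was destroyed by the collapse.

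This last case is the main obstacle, and it is resolved using the defining property of a free face: if $S\supseteq\rho$ and $S\not\subseteq\tau$, then $S$ is a non-face, because $\tau$ is the \emph{unique} maximal face containing $\rho$. The plan is to perturb $R$ so that it acquires an element outside $\tau$ while still containing $\rho$. If $R$ uses at most $d$ of the color classes, some class $k$ is unused; set $R'=R\cup\{b_k\}$, which is a rainbow set since $b_k\notin\tau\supseteq R$ forces $b_k\notin R$. If instead $R$ uses all $d+1$ classes, then $|R|=d+1>d\ge|\rho|$, so there is a chosen element $a_k\in R\setminus\rho$; swap it for $b_k$, setting $R'=(R\setminus\{a_k\})\cup\{b_k\}$, again a valid rainbow set as $b_k\notin\tau\supseteq R\setminus\{a_k\}$. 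In both cases $\rho\subseteq R'$ and $b_k\in R'\setminus\tau$, so $R'\not\subseteq\tau$, whence $R'\in X^c$, completing the induction. The crucial role of $d$-collapsibility is exactly here: the bound $|\rho|\le d$ guarantees that among the $d+1$ color classes at least one vertex of $R$ can be redirected outside $\tau$ (an unused class when $|R|\le d$, or a non-$\rho$ representative when $|R|=d+1$), which is precisely what restores the non-face property.
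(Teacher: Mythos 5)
Your proof is correct. Note that the paper itself contains no proof of Theorem \ref{km}: it is quoted from Kalai and Meshulam \cite{KM2005}, where it arises as a special case of a homological colorful Helly theorem for $d$-Leray complexes (with a detailed derivation in \cite{aharoni2018fractional}). Your argument is therefore a genuinely different and more elementary route: induction on the number of faces along the collapsing sequence, using only the defining property of a free face $\rho$ with $|\rho|\le d$ --- that any set containing $\rho$ but not contained in the unique maximal coface $\tau$ is a non-face --- together with the pigeonhole step that a rainbow set $R$ with $\rho\subseteq R\subseteq\tau$ either misses some color class $k$ (so can be enlarged by $b_k\in\sigma_k\setminus\tau$) or uses all $d+1$ classes, whence $|R|=d+1>d\ge|\rho|$ and some representative $a_k\in R\setminus\rho$ can be swapped for $b_k$. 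I verified the details: $X'$ is $d$-collapsible with strictly fewer faces; the $\sigma_i$ remain non-faces of $X'\subseteq X$; each $b_i\in\sigma_i\setminus\tau$ exists because $\tau\in X$ and $X$ is downward closed; and in both subcases $R'\supseteq\rho$ and $R'\not\subseteq\tau$ force $R'\in X^c$, with distinctness of representatives guaranteed by $b_k\notin\tau\supseteq R\setminus\{a_k\}$. The trade-off between the two approaches: yours is short and self-contained but uses collapsibility essentially, while the Kalai--Meshulam theorem holds under the weaker hypothesis that $X$ is $d$-Leray (recall $C(X)\ge L(X)$, Lemma \ref{lem:coll_leray}) and comes in a stronger matroidal form. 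Two cosmetic points you should make explicit: in the base case, allow the empty rainbow set (when $X$ is void, $\emptyset\in X^c$; for nonvoid $X$ every $\sigma_i$ is automatically nonempty, so this degenerate situation never recurs), and when invoking the induction hypothesis for $X'$ keep the ambient set $V$ fixed even if some vertices of $X$ disappear --- harmless, since membership in $X^c$ depends only on the complex and not on the ambient vertex set.
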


Theorem \ref{km} is a special case of Theorem 2.1 in \cite{KM2005}. A detailed derivation of Theorem \ref{km} from the general case can be found in \cite{aharoni2018fractional}.
An immediate application of Theorem \ref{km} gives us:

\begin{proposition}\label{prop:helly_corollary}
	Let $G$ be a graph and $n\geq 1$ an integer. Then,
	\[
	f_G(n)\leq C(I_n(G))+1.
	\]
\end{proposition}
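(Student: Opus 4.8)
The plan is to apply the Kalai–Meshulam theorem (Theorem \ref{km}) directly to the complex $X = I_n(G)$. The statement to prove relates $f_G(n)$, a combinatorial quantity about rainbow independent sets, to the collapsibility number $C(I_n(G))$, and the bridge between these is precisely the complementarity built into Theorem \ref{km}. So the first thing I would do is unwind the definitions to see what $I_n(G)^c$ is: a subset $U \subset V$ fails to lie in $I_n(G)$ exactly when $\alpha(G[U]) \geq n$, i.e. when $U$ contains an independent set of size $n$. Thus $I_n(G)^c = \{U \subset V : U \text{ contains an independent set of size } n\}$.

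Next, set $d = C(I_n(G))$, so that $I_n(G)$ is $d$-collapsible by definition of the collapsibility number. Theorem \ref{km} then tells me that any collection of $d+1$ sets in $I_n(G)^c$ has a rainbow set belonging to $I_n(G)^c$. The main translation step is to connect this to the definition of $f_G(n)$, which concerns collections of independent sets of size $n$. Given $d+1$ independent sets $A_1, \ldots, A_{d+1}$ of size $n$, each $A_i$ is itself an independent set of size $n$, hence certainly contains an independent set of size $n$, so $A_i \in I_n(G)^c$. Applying Theorem \ref{km} yields a rainbow set $R = \{a_{i_1}, \ldots\}$ with $R \in I_n(G)^c$, meaning $R$ contains an independent set $J$ of size $n$.

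The one subtlety I anticipate is that Theorem \ref{km} produces a rainbow set that merely \emph{contains} an independent set of size $n$, whereas $f_G(n)$ asks for a \emph{rainbow independent set of size exactly $n$}. I would resolve this by noting that a rainbow set for $\{A_1, \ldots, A_{d+1}\}$, as defined in the paper, already has exactly as many elements as the number of indices used, and I can restrict attention to the subcollection of $A_i$ indexed by the $n$ chosen representatives: the independent subset $J \subseteq R$ of size $n$ is obtained by picking one element from each of $n$ distinct sets among the $A_i$, so $J$ is itself a rainbow independent set of size $n$ for the family. (More carefully, the definition of a rainbow set of size $n$ in the paper only requires that the $n$ elements come from $n$ distinctly-indexed sets, so any size-$n$ independent subset of a rainbow set is again a rainbow independent set.) Hence every collection of $d+1$ independent sets of size $n$ admits a rainbow independent set of size $n$, which by definition gives $f_G(n) \leq d+1 = C(I_n(G)) + 1$.

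The genuinely delicate point, and the step I would double-check most carefully, is precisely this extraction of an \emph{exactly size-$n$} rainbow independent set from the Kalai–Meshulam output; everything else is a routine unwinding of definitions. Assuming that extraction goes through cleanly, the proposition follows immediately.
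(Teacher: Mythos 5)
Your proof is correct and is exactly the argument the paper has in mind: the paper states the proposition as an ``immediate application'' of Theorem~\ref{km} without writing out a proof, and your unwinding --- observing that $I_n(G)^c$ consists of the sets containing an independent set of size $n$, applying the theorem with $d = C(I_n(G))$, and extracting a size-$n$ independent subset of the rainbow set, which is itself a rainbow set since its elements come from distinctly indexed members of the family --- is precisely the intended derivation. The ``delicate point'' you flag goes through cleanly for the reason you give, so there is nothing to add.
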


The study of rainbow independent sets originated as a generalization of the ``rainbow matching problem'' in graphs (note that a matching in a graph is an independent set in its line graph); see e.g. \cite{aharoni2009rainbow,aharoni2016large,barat2017rainbow}. The application of collapsibility numbers in the study of rainbow matchings was initiated in \cite{aharoni2018fractional}, and further developed in \cite{briggs2019choice}.

In \cite{ABKK}, Aharoni et al. prove some results about $f_\mathcal{G}(n)$ for different classes of graphs. One of the main conjectures in \cite{ABKK} is the following.
\begin{conjecture}[Aharoni, Briggs, Kim, Kim \cite{ABKK}]
	\label{mainconj}
	Let $\mathcal{D}(\Delta)$ be the class of graphs with maximum degree at most $\Delta$, and let $n$ be a positive integer. Then,
	\[
	f_{\mathcal{D}(\Delta)}(n) = \left\lceil\frac{\Delta + 1}{2}\right\rceil(n-1) + 1.
	\]
\end{conjecture}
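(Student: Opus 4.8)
The plan is to establish the two inequalities separately. For the lower bound $f_{\mathcal{D}(\Delta)}(n) \ge \lceil(\Delta+1)/2\rceil(n-1)+1$ one exhibits, for $d := \lceil(\Delta+1)/2\rceil$, a graph $G$ with maximum degree at most $\Delta$ together with a family of $d(n-1)$ independent sets of size $n$ admitting no rainbow independent set of size $n$. The clean base case is $n=2$: take $G$ to be the complete multipartite graph with $d$ parts of size $2$, whose maximum degree is $2(d-1)\le\Delta$; the $d$ parts are independent $2$-sets, and any two vertices drawn from distinct parts are adjacent, so no rainbow independent $2$-set exists. For general $n$ one assembles $n-1$ disjoint copies of this gadget and builds the family from the parts of each copy, so that any purported rainbow set of size $n$ is forced by pigeonhole (there being only $n-1$ copies) to contain two vertices from distinct parts of a common copy, hence an edge. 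This direction is the more tractable one, and the explicit construction is due to \cite{ABKK}; the real content of the conjecture is the matching upper bound.

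For the upper bound, Proposition \ref{prop:helly_corollary} reduces everything to a single collapsibility estimate:
\[
C(I_n(G)) \le \left\lceil \frac{\Delta+1}{2}\right\rceil (n-1) \qquad \text{whenever } \Delta(G)\le\Delta.
\]
Indeed, combined with $f_G(n)\le C(I_n(G))+1$ this yields the claimed bound for every $G\in\mathcal{D}(\Delta)$, and taking the supremum gives $f_{\mathcal{D}(\Delta)}(n)\le\lceil(\Delta+1)/2\rceil(n-1)+1$. Thus the entire problem becomes the verification of this collapsibility bound. I would prove it by a double induction on $n$ and on $|V|$, using the standard vertex-elimination recursion $C(X)\le\max\{C(\cost_X(v)),\,C(\lk_X(v))+1\}$ for any vertex $v$. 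The deletion term is harmless: $\cost_{I_n(G)}(v)=I_n(G-v)$, and $G-v$ has fewer vertices and maximum degree at most $\Delta$, so the induction hypothesis bounds it by $d(n-1)$. The work is concentrated in the link. Writing $M=V\setminus N[v]$ and decomposing a face as $A\cup B$ with $A\subseteq N(v)$ and $B\subseteq M$, one computes
\[
\lk_{I_n(G)}(v)=\{A\cup B:\ \alpha(G[A\cup B])\le n-1,\ \alpha(G[B])\le n-2\}.
\]
The constraint on $B$ alone is exactly $B\in I_{n-1}(G[M])$, so the link sits over a copy of $I_{n-1}(G[M])$ with the neighborhood directions $A\subseteq N(v)$ attached. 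The strategy is to collapse away these $|N(v)|\le\Delta$ neighborhood directions and retract the link onto an $I_{n-1}$-type complex on a graph of maximum degree at most $\Delta$, so that the $n-1$ case contributes $d(n-2)$ and the neighborhood accounts for the remaining budget of at most $d-1=\lceil(\Delta-1)/2\rceil$.

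The main obstacle is precisely this accounting for the neighborhood, and it is here that the factor of $\tfrac12$ in the conjecture lives. A naive treatment collapses the neighborhood directions one vertex at a time and charges one unit to each of the up to $\Delta$ neighbors, yielding only $C(I_n(G))\lesssim\Delta(n-1)$ — off by a factor of two. To reach the optimal constant one must collapse the neighborhood so as to eliminate two vertices per unit of budget, i.e.\ exploit adjacencies \emph{inside} $N(v)$ so that edges among neighbors can be paired off. When $G$ is claw-free one has $\alpha(G[N(v)])\le 2$, which tightly constrains the local structure and makes this pairing essentially forced; this is what renders the claw-free case (and the application stated in the abstract) tractable. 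For a general bounded-degree graph, however, $G[N(v)]$ may be sparse or even independent, the pairing mechanism breaks down, and no choice of the eliminated vertex $v$ is obviously good. Overcoming this — whether by a cleverer global vertex-ordering, by eliminating an edge rather than a single vertex, or by a charging scheme that recovers the missing factor from the ambient graph — is the crux on which I would concentrate the main effort, and it is the step that keeps the full conjecture open.
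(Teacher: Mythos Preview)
The statement you are attempting is a \emph{conjecture}; the paper does not prove it, and explicitly says it remains open beyond the cases $\Delta\le 2$ and $n\le 3$. So there is no proof in the paper to compare against. That said, your proposed route has a fatal structural flaw worth naming.

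Your entire upper-bound strategy is to establish
\[
C(I_n(G)) \le \left\lceil \frac{\Delta+1}{2}\right\rceil (n-1)\qquad\text{for every }G\in\mathcal{D}(\Delta),
\]
and then apply Proposition~\ref{prop:helly_corollary}. But this inequality is exactly Conjecture~\ref{mainconj_collapsibility_version}, and the paper \emph{refutes} it: for the dodecahedral graph $G$ (which is $3$-regular) and $n=8$, Proposition~\ref{claim:10,2} gives $L(I_8(G))\ge 16$, hence $C(I_8(G))\ge 16 > 14 = 2(8-1)$. So the bound you are trying to prove is false already for $\Delta=3$, and no clever vertex-ordering or pairing inside $N(v)$ can rescue it. The same example satisfies $f_G(8)=11\le 15$, so Conjecture~\ref{mainconj} may well be true --- but the route through $C(I_n(G))$ is a dead end, not merely a hard step. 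The paper's own partial progress (Theorem~\ref{maincor}, for claw-free graphs) does \emph{not} go through $C(I_n(G))$; it bounds instead the collapsibility of a link $\lk(I_n(G),A)$ at a size-$(n-1)$ independent set $A$ (Proposition~\ref{prop:clawfree}) and combines this with an inductive rainbow argument. Your proposal should be redirected toward something of that shape (cf.\ Conjecture~\ref{conjecture:link}).

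A smaller issue: your lower-bound construction for general $n$ is not well-formed. The ``parts'' of your $(n-1)$ disjoint gadgets are independent sets of size~$2$, but $f_G(n)$ is defined via families of independent sets of size~$n$, so the family you describe is not even admissible once $n>2$. The extremal examples the paper cites from \cite{ABKK} are the graphs $G_{k,n}$ of Section~\ref{sec:lowerbound} (powers of a cycle), which are of a quite different nature.
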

It is shown in \cite{ABKK} that Conjecture~\ref{mainconj} is true for $\Delta\leq 2$ and for $n \leq 3$.
In the general case, the best bounds observed by Aharoni et al. are given by
\[\left\lceil\frac{\Delta + 1}{2}\right\rceil(n-1) + 1 \leq f_{\mathcal{D}(\Delta)}(n) \leq \Delta(n-1)+1.\]

It is natural to ask whether the following extension of Conjecture \ref{mainconj} holds:

\begin{conjecture}\label{mainconj_collapsibility_version}
	Let $G$ be a graph with maximum degree at most $\Delta$, and let $n$ be a positive integer. Then,
		\[
		C(I_n(G))\leq \left\lceil\frac{\Delta + 1}{2}\right\rceil(n-1).
		\]
\end{conjecture}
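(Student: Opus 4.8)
The plan is to argue by induction on the number of vertices $|V|$ (with $n$ and $\Delta$ as parameters), driving the argument with the standard link--deletion principle for collapsibility: for any vertex $v$,
\[
C(I_n(G)) \le \max\bigl(C(\cost_{I_n(G)}(v)),\; C(\lk_{I_n(G)}(v)) + 1\bigr).
\]
Since $\cost_{I_n(G)}(v) = I_n(G-v)$ and $G-v$ again has maximum degree at most $\Delta$ on fewer vertices, the deletion term is bounded by $\lceil(\Delta+1)/2\rceil(n-1)$ by the inductive hypothesis. Everything therefore reduces to bounding the link, and the goal becomes
\[
C(\lk_{I_n(G)}(v)) \le \left\lceil\frac{\Delta+1}{2}\right\rceil(n-1) - 1.
\]

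First I would compute the link explicitly. Writing $W = V \setminus (\{v\}\cup N(v))$ for the set of non-neighbours of $v$, a case analysis on whether an independent set uses $v$ gives
\[
\lk_{I_n(G)}(v) = \bigl\{U \subseteq V\setminus\{v\} : \alpha(G[U]) \le n-1 \text{ and } \alpha(G[U\cap W]) \le n-2\bigr\}.
\]
The key structural observation is that the faces of this link supported on $W$ form exactly $I_{n-1}(G[W])$, since for $U \subseteq W$ the binding constraint is $\alpha(G[U]) \le n-2$. Thus if one peels off the vertices of $N(v)$ one at a time by further link--deletion steps, the complex that remains is $I_{n-1}(G[W])$, whose collapsibility is at most $\lceil(\Delta+1)/2\rceil(n-2)$ by induction (fewer vertices and smaller $n$). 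Comparing with the target, the entire budget left for peeling $N(v)$ is
\[
\left\lceil\frac{\Delta+1}{2}\right\rceil(n-1) - 1 - \left\lceil\frac{\Delta+1}{2}\right\rceil(n-2) = \left\lfloor\frac{\Delta}{2}\right\rfloor.
\]

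The crux, and the step I expect to be the main obstacle, is therefore to show that peeling the at most $\Delta$ vertices of $N(v)$ off the link costs at most $\lfloor\Delta/2\rfloor$ in collapsibility, i.e.\ that the neighbourhood can be removed at an amortized cost of one collapse per two vertices. This factor of $1/2$ is precisely what separates the conjectured bound from the trivial $\Delta(n-1)$, and it cannot come from peeling vertices individually (which would cost up to $\Delta$). The cleanest model for where the saving originates is the case $n=2$, where the link is exactly the clique complex $I_2(G[N(v)])$ on at most $\Delta$ vertices; there the saving follows from the auxiliary bound $C(I_2(H)) \le \lfloor |V(H)|/2\rfloor$ for every graph $H$, which is itself provable by induction on $|V(H)|$ by always collapsing at a non-universal vertex (a complete graph contributes a full simplex and is $0$-collapsible). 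To treat general $n$ I would strengthen the induction hypothesis to the two-parameter family of complexes modelled on the link, namely $\{U : \alpha(G[U]) \le n-1,\ \alpha(G[U\cap W]) \le p-1\}$, and prove for these a bound that pairs up the vertices of $N(v)$. Establishing such a pairing uniformly in $n$ is the genuinely hard part: the interaction between the two independence constraints obstructs a direct clique-complex argument, which is presumably why the full conjecture is reachable only under additional hypotheses such as claw-freeness, where the neighbourhood constraint $\alpha(G[N(v)]) \le 2$ is strong enough to make the pairing tractable, at the mild cost of the slightly weaker constant $\lfloor(\Delta/2+1)(n-1)\rfloor$ appearing in the application.
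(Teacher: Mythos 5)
Your reductions are all sound as far as they go: the link identity $\lk(I_n(G),v)=\{U\subseteq V\setminus\{v\}:\alpha(G[U])\le n-1,\ \alpha(G[U\cap W])\le n-2\}$ is correct, the budget arithmetic $\lceil\frac{\Delta+1}{2}\rceil(n-1)-1-\lceil\frac{\Delta+1}{2}\rceil(n-2)=\lfloor\frac{\Delta}{2}\rfloor$ is right, and your $n=2$ model --- the link is the clique complex of $G[N(v)]$, hence $\lfloor\Delta/2\rfloor$-collapsible by the flag-complex bound (Proposition \ref{lemma:vertexset_bound} with $d=1$) --- is essentially how the paper proves Theorem \ref{thm:n=2}. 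But the step you yourself flag as the crux, peeling the $\le\Delta$ neighbours of $v$ at amortized cost $1/2$ for general $n$, is never established, so what you have is a proof plan with its central lemma missing, not a proof.

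More importantly, the gap cannot be closed: the statement is Conjecture \ref{mainconj_collapsibility_version} of the paper, and the paper refutes it. For the dodecahedral graph $G$, which is $3$-regular, Proposition \ref{claim:10,2} computes (via the Nerve Theorem applied to the five maximal independent sets, followed by Alexander duality) that $\tilde{H}_i(I_8(G))\cong\Rea^4$ for $i=15$ and vanishes otherwise, so $L(I_8(G))\ge 16$; by Wegner's inequality (Lemma \ref{lem:coll_leray}), $C(I_8(G))\ge L(I_8(G))\ge 16>14=\lceil\frac{3+1}{2}\rceil(8-1)$. Disjoint unions of $k$ copies push this further, giving $L(I_{8k}(G_k))\ge 17k-1$ (Theorem \ref{prop:union}, Corollary \ref{cor:dodecahedral}), so the discrepancy is not a boundary effect. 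Consequently your hoped-for pairing step must genuinely fail for some $3$-regular graphs already at $n=8$, and the obstruction is homological rather than technical. What survives of your strategy is exactly what the paper proves: the bound holds for $\Delta\le 2$ or $n\le 3$ (Theorems \ref{cor:bounded degree}, \ref{thm:n=2}, \ref{thm:n=3}), and the link-level estimate you are after, $C(\lk(I_n(G),A))\le\lfloor\frac{(n-1)\Delta}{2}\rfloor$, is established only under claw-freeness (Proposition \ref{prop:clawfree}); whether that link bound holds for all bounded-degree graphs is posed as the open Conjecture \ref{conjecture:link}. Your closing hedge about claw-freeness was the right instinct --- it is the hypothesis under which the pairing becomes tractable, and without it the statement you set out to prove is simply false.
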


Our main results are the following:

\begin{theorem}\label{chordal collapse}
	Let $G=(V,E)$ be a chordal graph and $n\geq 1$  an integer. Then,
	\[C(I_n(G)) \leq n-1.\]
\end{theorem}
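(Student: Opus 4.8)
The plan is to prove the statement by a double induction, the outer one on $n$ and the inner one on the number of vertices $|V|$, exploiting the two standard local moves for collapsibility: the deletion and the link at a vertex. Throughout I will use the \emph{gluing lemma}: if $\lk_X(v)$ is $(d-1)$-collapsible and the deletion $\cost_X(v)=X\setminus v$ is $d$-collapsible, then $X$ is $d$-collapsible. (This is proved by first collapsing the star of $v$ into $\cost_X(v)$ using free faces of the form $\{v\}\cup\rho$, where $\rho$ runs over the free faces of a collapsing sequence of $\lk_X(v)$; these have size at most $d$, and one then finishes with a collapsing sequence of $\cost_X(v)$.) I will apply this with $d=n-1$, so that it suffices to show the link is $(n-2)$-collapsible and the deletion is $(n-1)$-collapsible.

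The base cases are immediate: $I_1(G)=\{\emptyset\}$ is $0$-collapsible, and $I_n(G)=\{\emptyset\}$ when $V=\emptyset$. For the inductive step I would choose $v$ to be a \emph{simplicial vertex} of $G$, which exists because $G$ is chordal; thus $N(v)$ is a clique. The deletion is then easy to handle: $\cost_{I_n(G)}(v)=I_n(G[V\setminus\{v\}])$, and since $G[V\setminus\{v\}]$ is chordal on fewer vertices, the induction on $|V|$ gives that it is $(n-1)$-collapsible.

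The crucial step is identifying the link. Writing $R=V\setminus N[v]$ and splitting any $U\subseteq V\setminus\{v\}$ as $U=(U\cap N(v))\cupdot(U\cap R)$, I would use that $N(v)$ is a clique to observe that every independent set of $G[U]$ meets $N(v)$ in at most one vertex, whence $\alpha(G[U\cup\{v\}])=1+\alpha(G[U\cap R])$. Consequently $U\cup\{v\}\in I_n(G)$ if and only if $\alpha(G[U\cap R])\le n-2$, with no constraint on $U\cap N(v)$. This exhibits the link as a join $\lk_{I_n(G)}(v)=\Delta_{N(v)} * I_{n-1}(G[R])$, where $\Delta_{N(v)}$ denotes the full simplex on $N(v)$. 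Since $G[R]$ is chordal, the induction on $n$ gives that $I_{n-1}(G[R])$ is $(n-2)$-collapsible.

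It then remains to argue that joining with a full simplex preserves $d$-collapsibility, so that the link is $(n-2)$-collapsible; this is the step I expect to require the most care. I would reduce to coning with a single new vertex $w$ and show that a collapsing sequence of a complex $Y$ lifts verbatim to $w*Y$: if $\sigma$ is a free face of $Y$ with unique maximal face $\tau$, then $\sigma$ is still free in $w*Y$ with unique maximal face $\{w\}\cup\tau$, and the elementary $d$-collapse at $\sigma$ removes exactly the faces $\{\eta:\sigma\subseteq\eta\subseteq\{w\}\cup\tau\}$, which is precisely the union of the interval removed in $Y$ and its cone. Crucially, the free face $\sigma$ is unchanged, so the bound on its size is preserved; iterating over the vertices of $N(v)$ then yields the claim. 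With the link $(n-2)$-collapsible and the deletion $(n-1)$-collapsible, the gluing lemma with $d=n-1$ completes the induction.
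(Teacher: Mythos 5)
Your proof is correct and follows essentially the same route as the paper: choose a simplicial vertex $v$ (Lekkerkerker--Boland), identify $\lk(I_n(G),v)=2^{N_G(v)} \ast I_{n-1}(G[V\setminus N_G[v]])$, and combine the deletion/link bound (Tancer's lemma, your ``gluing lemma'') with the fact that joining with a full simplex preserves the collapsibility number. The only cosmetic differences are that you reprove the two auxiliary lemmas from scratch (the paper cites Tancer and derives the join fact from his cone lemma $C(X)=C(X\setminus v)$), and both of your direct arguments are sound.
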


\begin{theorem}\label{cor:bounded degree}
	Let $G=(V,E)$ be a graph with  maximum degree at most $\Delta$ and  $n\geq 1$ an integer. Then,
	\[
		C(I_n(G))\leq \Delta(n-1).
	\]
\end{theorem}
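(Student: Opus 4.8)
The plan is to prove the bound by a double induction on $n$ and on $|V|$, using the standard link--deletion inequality for collapsibility: for any complex $X$ and any vertex $v$,
\[ C(X)\le \max\{\,C(\cost_X(v)),\; C(\lk_X(v))+1\,\}, \]
where $\cost_X(v)=\{\sigma\in X: v\notin\sigma\}$ is the deletion and $\lk_X(v)=\{\sigma:\, v\notin\sigma,\ \sigma\cup\{v\}\in X\}$ is the link. (This is proved by first mirroring an optimal collapsing sequence of $\lk_X(v)$ on the faces of $X$ that contain $v$---each such collapse gains one in dimension, which accounts for the $+1$---and then collapsing the surviving complex $\cost_X(v)$.) The base case is $n=1$, where $I_1(G)=\{\emptyset\}$ and $C=0=\Delta(n-1)$; I will also assume $\Delta\ge 1$, the case $\Delta=0$ being degenerate.

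Applying the inequality to $X=I_n(G)$ and a vertex $v$, observe first that $\cost_{I_n(G)}(v)=I_n(G-v)$, and $G-v$ still has maximum degree at most $\Delta$, so $C(I_n(G-v))\le \Delta(n-1)$ by the induction on $|V|$. Hence it suffices to prove $C(\lk_{I_n(G)}(v))\le \Delta(n-1)-1$. A direct computation pins down the link: since $\alpha(G[U\cup\{v\}])=\max\{\alpha(G[U]),\,1+\alpha(G[U\setminus N(v)])\}$, we get
\[ \lk_{I_n(G)}(v)=\{\,U\subseteq V\setminus\{v\}:\ \alpha(G[U])<n\ \text{and}\ \alpha(G[U\setminus N(v)])<n-1\,\}. \]

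To bound this link I would peel off the neighbors of $v$ one at a time. Deleting a neighbor $w$ produces $\lk_{I_n(G-w)}(v)$, the link of $v$ in a graph where $v$ has one fewer neighbor but the same non-neighbor set $V\setminus N[v]$; after all $\deg(v)\le\Delta$ neighbors are deleted, the surviving complex is exactly $I_{n-1}(G[V\setminus N[v]])$, which is $\Delta(n-2)$-collapsible by the induction on $n$. The target is therefore the sub-lemma
\[ C(\lk_{I_n(G)}(v))\le C\bigl(I_{n-1}(G[V\setminus N[v]])\bigr)+\deg(v)-1, \]
since then $C(\lk_{I_n(G)}(v))\le \Delta(n-2)+\Delta-1=\Delta(n-1)-1$, as required. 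As a sanity check, at $n=2$ this reads $C(\lk_{I_2(G)}(v))\le \deg(v)-1$, reflecting the fact that the link then lives on the $\le\Delta$ vertices of $N(v)$ and so is automatically $(\deg(v)-1)$-collapsible.

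The main obstacle is the link branch in this peeling. Deleting a neighbor $w$ is harmless, but the link--deletion inequality simultaneously forces one to control $\lk_{I_n(G)}(\{v,w\})$, and iterating creates links $\lk_{I_n(G)}(\sigma)$ of larger faces $\sigma$ containing $v$. Each such link carries an \emph{additional} independence constraint, namely $\alpha(G[U\setminus N(x)])<n-1$ for every $x\in\sigma$, so the recursion does not stay inside the family of complexes $I_{n'}(H)$. I therefore expect the real work to be a strengthened inductive statement covering these ``multiply constrained'' links---equivalently, links of arbitrary faces of $I_n(G)$---arranged so that the maximum-degree hypothesis caps the number of vertices that must be peeled and the bookkeeping extracts exactly the $\deg(v)-1$ saving. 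Making this accounting tight, rather than losing an additive constant and only reaching $\Delta(n-1)$ instead of $\Delta(n-1)-1$ for the link, is the delicate point on which the whole argument turns.
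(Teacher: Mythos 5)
Your proposal and the paper diverge at exactly the point you flag as ``the delicate point on which the whole argument turns,'' and that point is a genuine gap: the sub-lemma
\[
C\bigl(\lk(I_n(G),v)\bigr)\le C\bigl(I_{n-1}(G[V\setminus N_G[v]])\bigr)+\deg_G(v)-1
\]
is asserted but never proved, and the peeling argument you sketch does not yield it. Note first a bookkeeping issue: in Tancer's inequality $C(X)\le\max\{C(X\setminus v),C(\lk(X,v))+1\}$, the deletion branch carries \emph{no} additive cost, so deleting the $\deg(v)$ neighbors of $v$ one at a time does not by itself produce a $+(\deg(v)-1)$ term; the $+1$'s accrue only along \emph{nested links}, i.e.\ you must separately bound $C(\lk(I_n(G'),\{v\}\cup S))$ by roughly $\Delta(n-1)-1-|S|$ for every $S\subseteq N_G(v)$ arising in the recursion (this is the content of the paper's Lemma~\ref{lemma:induction_step}). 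As you correctly observe, these links carry one constraint $\alpha(G[U\setminus N_G(x)])<n-1$ per vertex $x\in\{v\}\cup S$ and so leave the family of complexes $I_{n'}(H)$; the ``strengthened inductive statement'' you defer to is the entire difficulty, and the paper gives evidence it is genuinely hard: control of such multiply-constrained links is established only for claw-free graphs (Proposition~\ref{prop:clawfree}), is posed as the open Conjecture~\ref{conjecture:link} in general, and the stronger Conjecture~\ref{mainconj_collapsibility_version} is actually \emph{false} for $\Delta=3$ (Section~\ref{sec:lowerbound}), so any local link analysis must be delicate enough not to prove too much.

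The paper avoids this problem entirely by a global argument. For $\Delta\ge 3$ it invokes Brooks' theorem: if no component of $G$ is $K_{\Delta+1}$, then $G$ is $\Delta$-colorable, and then every face of $I_n(G)$ has at most $n-1$ vertices per color class, so $\dim(I_n(G))+1\le\Delta(n-1)$ and the bound follows from the trivial estimate $C(X)\le\dim(X)+1$ (Proposition~\ref{prop:easy} and Lemma~\ref{lemma:bdddim}) --- no link of a general vertex is ever analyzed. Components isomorphic to $K_{\Delta+1}$ are handled by choosing a \emph{simplicial} vertex $v$ there, for which the link does decompose cleanly, $\lk(I_n(G),v)=2^{N_G(v)}\ast I_{n-1}(G[V\setminus N_G[v]])$ (Lemma~\ref{lemma simplical}), so the recursion stays inside the family $I_{n'}(H)$ with no additive $\deg(v)-1$ correction needed; the case $\Delta=2$ (where odd cycles obstruct Brooks) is treated separately by a bespoke induction on the number of cycles (Proposition~\ref{claim:delta2}). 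If you want to salvage your approach, you should either prove your sub-lemma in the strengthened form covering links of arbitrary faces of $I_n(G)$ --- which would be new and interesting, being closely related to Conjecture~\ref{conjecture:link} --- or restrict the vertex-by-vertex analysis to simplicial vertices and fill the generic case by a coloring argument, which is precisely the paper's route.
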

The bound in Theorem \ref{cor:bounded degree} is tight only for $\Delta\leq 2$. 
In the case $n\leq 3$ we can prove the following tight bounds, for general $\Delta$: 
\begin{theorem}\label{thm:n=2}
	Let $G=(V,E)$ be a graph with maximum degree at most $\Delta$. Then,
	\[
	 C(I_2(G))\leq \left\lceil \frac{\Delta+1}{2} \right\rceil.
	\]
\end{theorem}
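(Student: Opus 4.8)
The plan is to reduce the statement to two facts about the clique complex $I_2(G)$ and combine them by induction on the number of vertices. Throughout, write $X = I_2(G)$ and, for a vertex $v$, let $N(v)$ denote its set of neighbours, so that $\deg(v) = |N(v)| \le \Delta$. The two structural observations I would use are that the link of $v$ in $X$ is the clique complex of the induced neighbourhood, $\lk_X(v) = I_2(G[N(v)])$, and that the costar (deletion) of $v$ is $\cost_X(v) = I_2(G - v)$, where $G - v = G[V\setminus\{v\}]$; both are immediate from the definition of $I_2$.

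First I would record the standard link--deletion recursion
\[
C(X) \le \max\{\, C(\lk_X(v)) + 1,\ C(\cost_X(v)) \,\}.
\]
The idea behind it is to collapse the whole star of $v$ onto $\cost_X(v)$ by mirroring an optimal collapse of $\lk_X(v)$: if $\sigma$ is a free face of $\lk_X(v)$ with unique maximal coface $\tau$, then $\{v\}\cup\sigma$ is a free face of the current complex with unique maximal coface $\{v\}\cup\tau$, so each elementary $d$-collapse of the link lifts to an elementary $(d+1)$-collapse removing the faces $\{v\}\cup\rho$ with $\sigma\subseteq\rho\subseteq\tau$. Once the collapse of $\lk_X(v)$ is exhausted one is left with exactly $\cost_X(v)$, which one finishes by an optimal collapse; since $d$-collapsibility implies $d'$-collapsibility for $d'\ge d$, the bound follows.

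The key lemma is the vertex-count bound: for every graph $H$ one has $C(I_2(H)) \le \lfloor |V(H)|/2\rfloor$. I would prove this by induction on $m = |V(H)|$. If $H$ is complete then $I_2(H)$ is a simplex and $C = 1 \le \lfloor m/2\rfloor$; otherwise $H$ has a non-edge, hence a vertex $v$ missing some other vertex, so $|N(v)| \le m-2$. Applying the recursion at $v$, the link is a clique complex on at most $m-2$ vertices, giving $C(\lk(v)) \le \lfloor (m-2)/2\rfloor = \lfloor m/2\rfloor - 1$, while the deletion is a clique complex on $m-1$ vertices, giving $C(\cost(v)) \le \lfloor(m-1)/2\rfloor \le \lfloor m/2\rfloor$; the maximum is $\lfloor m/2\rfloor$. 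The decisive point is that choosing a non-universal vertex drops the neighbourhood size by \emph{two}, exactly absorbing the $+1$ in the recursion.

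Finally I would prove the theorem by induction on $|V(G)|$, applying the recursion at an arbitrary vertex $v$. The deletion $\cost_X(v) = I_2(G-v)$ has maximum degree at most $\Delta$ and fewer vertices, so by induction $C(\cost_X(v)) \le \lceil(\Delta+1)/2\rceil$. For the link, the vertex-count bound gives $C(\lk_X(v)) = C(I_2(G[N(v)])) \le \lfloor \deg(v)/2\rfloor \le \lfloor \Delta/2\rfloor$, whence $C(\lk_X(v)) + 1 \le \lfloor \Delta/2\rfloor + 1 = \lceil(\Delta+1)/2\rceil$, using that this identity holds for both parities of $\Delta$. The recursion then yields $C(X) \le \lceil(\Delta+1)/2\rceil$. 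I expect the main obstacle to be the odd case: a purely degree-based induction applied to the link only yields $C(\lk_X(v)) \le \lceil \Delta/2\rceil$, which overshoots by one when $\Delta$ is odd, and the role of the vertex-count bound is precisely to supply the sharper estimate $\lfloor \deg(v)/2\rfloor$ that closes this gap.
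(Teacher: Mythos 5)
Your proposal is correct and follows essentially the same route as the paper: induction on $|V|$ via Tancer's link--deletion bound (Lemma \ref{lemma:tancer}), using that $\lk(I_2(G),v)=I_2(G[N_G(v)])$ is a flag complex on at most $\Delta$ vertices and hence at most $\left\lfloor \Delta/2\right\rfloor$-collapsible --- your key lemma is exactly the $d=1$ case of the paper's Proposition \ref{lemma:vertexset_bound}, which you reprove directly rather than cite. One small correction: a complete complex is $0$-collapsible (collapse at the free face $\emptyset$), not $1$-collapsible, and as written your base case ``$C=1\leq \left\lfloor m/2\right\rfloor$'' fails for $m\leq 1$; with the correct value $C=0$ the induction goes through verbatim.
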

\begin{theorem}\label{thm:n=3}
	Let $G=(V,E)$ be a graph with maximum degree at most $\Delta$. Then,
	\[
		C(I_3(G))\leq\begin{cases}
		\Delta+2 & \text{ if $\Delta$ is even,}\\
		\Delta+1 & \text{ if $\Delta$ is odd.}
		\end{cases}
	\]
\end{theorem}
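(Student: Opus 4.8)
The plan is to induct on $|V|$ using the standard recursive inequality for collapsibility: for any vertex $v$ of a complex $X$,
\[
C(X) \le \max\{\,C(\lk_X(v)) + 1,\ C(X \setminus v)\,\}.
\]
This is proved by first lifting a $c$-collapsing sequence of $\lk_X(v)$ to a sequence of $(c+1)$-collapses of $X$ that removes every face containing $v$ (a free face $\tau$ of the link, contained in a unique maximal coface, lifts to the free face $\tau \cup \{v\}$ of $X$, whose cofaces all contain $v$), and then collapsing the remaining deletion $X \setminus v$. Applying this with $X = I_3(G)$ and observing that $X \setminus v = I_3(G - v)$, where $G - v$ still has maximum degree at most $\Delta$, the deletion term is handled by the induction hypothesis. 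Fully expanded, the recursion gives $C(I_3(G)) \le 1 + \max_{H,w} C(\lk_{I_3(H)}(w))$ over induced subgraphs $H$ (which satisfy $\Delta(H)\le\Delta$) and their vertices $w$. Hence the whole theorem reduces to the single link estimate
\[
C\bigl(\lk_{I_3(H)}(w)\bigr) \le 2\Bigl\lceil\tfrac{\Delta+1}{2}\Bigr\rceil - 1,
\]
whose right-hand side equals $\Delta+1$ for even $\Delta$ and $\Delta$ for odd $\Delta$, exactly one less than the target bound.

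The next step is to understand this link. Writing $N = N_H(w)$ and $M = V(H)\setminus N_H[w]$, a set $\sigma \subseteq V(H)\setminus\{w\}$ lies in $\lk_{I_3(H)}(w)$ precisely when $\alpha(H[\sigma]) \le 2$ and $\alpha(H[\sigma\setminus N]) \le 1$; that is, $\alpha(H[\sigma])\le 2$ and $\sigma \cap M$ induces a clique. The crucial simplification offered by $n = 3$ is this last condition: the non-neighbours of $w$ inside a face are forced to behave as in the clique complex, i.e. as in $I_2(H[M])$. Indeed the ``bottom'' of the link, obtained by deleting all of $N$, is exactly $I_2(H[M])$, which is $\lceil(\Delta+1)/2\rceil$-collapsible by Theorem~\ref{thm:n=2}. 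The plan is then to bound $C(\lk_{I_3(H)}(w))$ by collapsing away the vertices of $N$ on top of this base, showing that each successive link, which now carries two clique-type constraints, can be collapsed using the same pairing-of-neighbours mechanism that underlies Theorem~\ref{thm:n=2}. Morally, one clique-constraint ``layer'' costs $\lceil(\Delta+1)/2\rceil = \lfloor\Delta/2\rfloor + 1$, and the $n=3$ link stacks two such layers; this is the expected source both of the factor $2$ and of the even/odd split in the constant.

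The main obstacle is carrying out this last step with the sharp constant. The complex $\lk_{I_3(H)}(w)$ is not itself of the form $I_3(H')$ or $I_2(H')$: it is a mixed complex combining a clique constraint on $M$ with a global independence-$\le 2$ constraint, and collapsibility is not well behaved under the intersections and restrictions that describe it. In particular, a naive application of the displayed inequality only takes a maximum and cannot account for the genuine \emph{accumulation} of the two layers, so it loses the factor $2$; to reach the value $2\lceil(\Delta+1)/2\rceil - 1$ I expect one must instead exhibit an explicit collapsing order for the link, pairing the at most $\Delta$ neighbours of $w$ so that each pair is charged with reducing the size of the free faces, and tracking face sizes directly while reusing the $I_2$-collapse of Theorem~\ref{thm:n=2} twice. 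Getting the parity right---saving the final $-1$ when $\Delta$ is odd, which is precisely what lowers the bound from $\Delta+1$ to $\Delta$---is the delicate point, and is presumably where a minimum-degree choice of $v$ and a case analysis on the parity of $\deg(v)$ enter.
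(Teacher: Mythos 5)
Your outer reduction is sound and is exactly the paper's: induct on $|V|$ via Lemma~\ref{lemma:tancer}, note $I_3(G)\setminus v = I_3(G\setminus v)$, and reduce the theorem to the single estimate $C(\lk(I_3(G),v)) \le \Delta+1$ for even $\Delta$ and $\le \Delta$ for odd $\Delta$ (the paper's Proposition~\ref{prop:n3_linka1}). Your description of the link is also correct: $\sigma$ lies in it iff $\alpha(G[\sigma])\le 2$ and $\sigma\cap M$ is a clique, where $M=V\setminus N_G[v]$. But from that point on the proposal is a plan, not a proof, and you say so yourself (``morally'', ``I expect'', ``presumably''). The link estimate is the entire substance of the theorem, and your sketch of it does not survive contact with the main structural obstacle: the vertex set of $\lk(I_3(G),v)$ is all of $V\setminus\{v\}$ (every pair of vertices is a face of $I_3(G)$), so no vertex-counting or flag-complex argument applies to it directly, and ``collapsing away the vertices of $N$ on top of the base $I_2(G[M])$'' leaves you with nested mixed complexes for which you have no collapsibility bound --- reusing Theorem~\ref{thm:n=2} twice is not a mechanism, since these links are not of the form $I_2(H)$ and collapsibility does not accumulate under Lemma~\ref{lemma:tancer}, as you yourself observe.

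What the paper actually does is qualitatively different and none of it appears in your proposal. It runs a \emph{second} induction inside the link, reducing to links of independent \emph{pairs} $A=\{a_1,a_2\}$ (with a three-case analysis in Proposition~\ref{prop:n3_linka1}); for a pair one may discard all vertices outside $N_G(A)\cup A$, so at most $2\Delta$ vertices remain. The common neighborhood $B=N_G(a_1)\cap N_G(a_2)$ is then processed by the partition machinery of Lemma~\ref{lemma:induction_step2}, the resulting complexes are shown to be \emph{flag} (Lemma~\ref{lemma:isflag}), and Proposition~\ref{lemma:vertexset_bound} gives $C\le \lfloor|U|/2\rfloor \le \Delta-|B|$, which is the even-$\Delta$ bound. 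The odd-$\Delta$ saving of $1$ --- which you correctly identify as the delicate point --- does not come from a minimum-degree choice of $v$ or the parity of $\deg(v)$; it comes from the \emph{equality characterization} in Proposition~\ref{lemma:vertexset_bound} (equality forces the complex to be a join of disjoint missing edges), after which a long extremal analysis (Claims~\ref{claim:odd1}--\ref{claim:odd7}, using the degree bound, the disjointness of missing faces, and a parity count of $|U_1\setminus\{v_1,\ldots,v_k\}| = \Delta-2k$) derives a contradiction. So the gap is concrete: you have reduced the theorem to Proposition~\ref{prop:n3_linka1} but supplied no proof of it, and the mechanism you gesture at (stacking two $I_2$ layers) is not the one that works.
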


Theorems \ref{cor:bounded degree}, 
\ref{thm:n=2} and \ref{thm:n=3} confirm Conjecture \ref{mainconj_collapsibility_version} in the special cases where $\Delta\leq 2$ or $n\leq 3$. Unfortunately, Conjecture \ref{mainconj_collapsibility_version} does not hold in general: In Section~\ref{sec:lowerbound} we present a family of counterexamples to the case $\Delta=3$.

Combining these results with Proposition \ref{prop:helly_corollary}, we obtain corresponding upper bounds for $f_G(n)$, thus recovering several results first proved in \cite{ABKK}. The following bound, however, is new:

\begin{theorem}\label{maincor}
	Let $G$ be a claw-free graph with maximum degree at most $\Delta$, and let $n\geq 1$ be an integer. Then,
\[
f_{G}(n) \leq \left\lfloor \left(\frac{\Delta}{2}+1\right)(n-1)\right\rfloor+1.
\]
\end{theorem}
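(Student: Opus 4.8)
The plan is to go through the collapsibility bridge of Proposition~\ref{prop:helly_corollary}: since $f_G(n)\le C(I_n(G))+1$, it suffices to prove the collapsibility estimate
\[
C(I_n(G))\le\left\lfloor\left(\tfrac{\Delta}{2}+1\right)(n-1)\right\rfloor
\]
for every claw-free graph $G$ with maximum degree at most $\Delta$. I would prove this by induction on the pair $(n,|V(G)|)$ in lexicographic order. The base case $n=1$ is immediate, since $I_1(G)=\{\emptyset\}$ is $0$-collapsible and the right-hand side is $0$.

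For the inductive step I would fix a vertex $v$ and apply the standard local collapsibility inequality $C(X)\le\max\{C(\cost_X(v)),\,C(\lk_X(v))+1\}$, where $\cost_X(v)=\{\sigma\in X:v\notin\sigma\}$ is the deletion; it follows by mirroring a collapse sequence of $\lk_X(v)$ to remove every face containing $v$ (each free face enlarged by the single vertex $v$) and then collapsing $\cost_X(v)$. For $X=I_n(G)$ the deletion is $\cost_{I_n(G)}(v)=I_n(G-v)$, and as $G-v$ is again claw-free with maximum degree at most $\Delta$ but fewer vertices, induction bounds $C(I_n(G-v))$ by the target quantity. The whole problem thus reduces to showing $C(\lk_{I_n(G)}(v))\le\left\lfloor\left(\tfrac{\Delta}{2}+1\right)(n-1)\right\rfloor-1$.

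To handle the link I would use its explicit form: writing $W=N(v)$ and $Z=V\setminus(W\cup\{v\})$, a set $\sigma\subseteq W\cup Z$ lies in $\lk_{I_n(G)}(v)$ exactly when $\alpha(G[\sigma])<n$ and $\alpha(G[\sigma\cap Z])<n-1$. The faces disjoint from $W$ form precisely the subcomplex $I_{n-1}(G[Z])$, whose collapsibility is bounded by $\left\lfloor\left(\tfrac{\Delta}{2}+1\right)(n-2)\right\rfloor$ by the induction hypothesis. The core step is to collapse $\lk_{I_n(G)}(v)$ down onto $I_{n-1}(G[Z])$ --- that is, to delete all faces meeting $W$ --- using free faces of size at most $\left\lfloor\left(\tfrac{\Delta}{2}+1\right)(n-1)\right\rfloor-1$. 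This is where claw-freeness is essential: having no induced claw forces $\alpha(G[W])\le2$, so at the top level the neighbours of $v$ behave like the vertices of a graph of independence number at most $2$, and their contribution to the collapse should be governed by the clique complex $I_2(G[W])$. By Theorem~\ref{thm:n=2} applied to $G[W]$ (of maximum degree at most $\Delta-1$) this contribution is at most $\lceil\Delta/2\rceil$, rather than the $\deg(v)\le\Delta$ that the neighbour-by-neighbour peeling behind Theorem~\ref{cor:bounded degree} would spend; this halving is exactly what turns the coefficient $\Delta$ into $\tfrac{\Delta}{2}+1$.

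The main obstacle is this link estimate. The global condition $\alpha(G[\sigma])<n$ couples the $W$-part and the $Z$-part of each face, so $\lk_{I_n(G)}(v)$ is a proper subcomplex of the join $I_2(G[W])*I_{n-1}(G[Z])$, and since collapsibility is not monotone under passage to subcomplexes one cannot merely invoke a join bound. I would instead construct an explicit collapsing order (equivalently, an acyclic matching) on the faces meeting $W$ that respects both constraints and pairs up the neighbours of $v$, so that no free face ever uses more than roughly half of $W$. The delicate points are to check that such an order exists, that it interacts correctly with the collapse of $I_{n-1}(G[Z])$, and that all free faces stay within the budget $\left\lfloor\left(\tfrac{\Delta}{2}+1\right)(n-1)\right\rfloor-1$ once the floor and parity bookkeeping is carried out (the tight case being $\Delta$ even). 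With the link bound established, the local inequality and the induction close at once.
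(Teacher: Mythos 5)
Your proposal is not a proof as it stands, and the gap is at its very center: you reduce the theorem to the global collapsibility bound $C(I_n(G))\leq \lfloor(\frac{\Delta}{2}+1)(n-1)\rfloor$ for claw-free $G$, but this statement is \emph{not} proved in the paper --- it is explicitly listed in Section \ref{sec:open} as an open problem (``It would also be interesting to prove the corresponding tight upper bound on the collapsibility number of $I_n(G)$''). The paper deliberately avoids bounding $C(I_n(G))$ itself. Its actual route runs the induction on $n$ at the level of rainbow sets: from the first $n-1$ of the given sets it extracts, by the induction hypothesis, a rainbow independent set $A$ of size $n-1$; it then bounds only $C(\lk(I_n(G),A))$ for $A$ an \emph{independent} $(n-1)$-set by $\lfloor\Delta(n-1)/2\rfloor$ (Proposition \ref{prop:clawfree}, which rests on the flag property of Lemma \ref{lemma:isflag}, the vertex-count bound of Proposition \ref{lemma:vertexset_bound}, and the partition machinery of Lemma \ref{lemma:induction_step2}); finally it applies the Kalai--Meshulam theorem (Theorem \ref{km}) to the remaining $\lfloor\Delta(n-1)/2\rfloor+1$ sets to obtain a rainbow set $R\notin\lk(I_n(G),A)$, whence $A\cup R$ contains a rainbow independent $n$-set. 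In your plan, by contrast, the link of an \emph{arbitrary} vertex $v$ must be collapsed, and the explicit acyclic matching that is supposed to do this within budget is precisely the step you leave unconstructed (``the delicate points are to check that such an order exists''); that is the entire difficulty, not a checkable detail.

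Beyond the missing construction, two concrete claims in your sketch fail. First, the structural claim is false: $\lk_{I_n(G)}(v)$ is not a subcomplex of $I_2(G[W])\ast I_{n-1}(G[Z])$. For $n\geq 3$, two nonadjacent neighbors $w_1,w_2\in W=N_G(v)$ form a face of the link, since $\alpha(G[\{v,w_1,w_2\}])=2<n$, yet $\{w_1,w_2\}\notin I_2(G[W])$; so the heuristic that the $W$-contribution is ``governed by'' $I_2(G[W])$, and hence by Theorem \ref{thm:n=2}, has no formal basis. (In the paper, the halving comes from flagness of the link of a full independent $(n-1)$-set together with Proposition \ref{lemma:vertexset_bound}, with claw-freeness used only to guarantee each vertex has at most two neighbors in $A$ --- not from $\alpha(G[W])\leq 2$.) Second, even granting your heuristic, the floor bookkeeping breaks for odd $\Delta$: writing $T(n)=\lfloor(\frac{\Delta}{2}+1)(n-1)\rfloor$, your induction needs $T(n)-T(n-1)\geq\lceil\Delta/2\rceil+1$, but for odd $\Delta$ the increment alternates between $(\Delta+1)/2$ and $(\Delta+3)/2$, so in alternating steps the budget is short by exactly $1$; closing it would require odd-$\Delta$ savings in the spirit of Proposition \ref{prop:n3_odd}, which your plan does not provide. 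For even $\Delta$ the budget is exactly tight (the extremal graphs $G_{k,n}$ of Section \ref{sec:lowerbound} attain $C(I_n(G_{k,n}))=(\frac{k}{2}+1)(n-1)$), leaving zero slack for the unproven matching argument.
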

Theorem \ref{maincor} shows that Conjecture \ref{mainconj} holds for the subclass of claw-free graphs with maximum degree at most $\Delta$, in the case where $\Delta$ is even. 
The proof of Theorem \ref{maincor} relies on bounding the collapsibility numbers of certain subcomplexes of the complex $I_n(G)$. 

The paper is organized as follows. In Section \ref{sec:prelim} we introduce some basic definitions about graphs and simplicial complexes that we will use throughout the paper. In Section \ref{sec:tools} we present several tools for bounding the collapsibility numbers of a general simplicial complex. Section \ref{sec:chordal} contains the proof of Theorem \ref{chordal collapse}, dealing with the case of chordal graphs. Section \ref{sec:bounded_degree} focuses on the class of graphs with bounded maximum degree. It contains the proofs of Theorems \ref{cor:bounded degree}, \ref{thm:n=2} and \ref{thm:n=3}. In Section \ref{sec:rainbow} we prove our main application, Theorem \ref{maincor}. Section \ref{sec:lowerbound} deals with the \emph{Leray number}, a homological variant of the collapsibility number, of the complex $I_n(G)$. In particular, it
presents extremal examples determining the tightness of our main results (Theorems \ref{thm:n=2}, \ref{thm:n=3} and \ref{maincor}), and 
  examples of $3$-regular graphs for which the complexes $I_n(G)$ do not satisfy the bound in Conjecture \ref{mainconj_collapsibility_version} (for various values of $n$). In Section \ref{sec:open} we discuss some open problems arising from our work and possible directions for further research.

\section{Preliminaries}\label{sec:prelim}

\subsection{Simplicial complexes}
A (finite) \emph{abstract simplicial complex} is a family $X$ of subsets of some finite set that is closed downward. That is, if $\tau\in X$ and $\sigma\subset \tau$, then $\sigma\in X$.

The \emph{vertex set} of $X$ is the set $V=\bigcup_{\sigma\in X} \sigma$. A set $\sigma\in X$ is called a {\em simplex} or a {\em face} of $X$. The \emph{dimension} of a simplex $\sigma\in X$ is $\dim(\sigma)=|\sigma|-1$. The \emph{dimension} of the complex $X$ is the maximal dimension of a simplex in $X$.

A \emph{missing face} of a complex $X$ is a set $\tau\subset V$ such that $\tau\notin X$ but $\sigma\in X$ for any $\sigma\subsetneq \tau$. If all the missing faces of $X$ are of size $2$, then $X$ is called a \emph{flag complex}.

Let $U\subset V$. The subcomplex of $X$ \emph{induced} by $U$ is the complex
\[
	X[U]=\{\sigma\in X:\, \sigma\subset U\}.
\]
For any vertex $v\in V$, we define the \emph{deletion} of $v$ in $X$ to be the subcomplex
\[
X\setminus v = \{\sigma\in X:\, v\notin \sigma\}=X[V\setminus\{v\}].
\]
Let $\tau\subset V$. We define the \emph{link} of $\tau$ in $X$ to be the subcomplex
\[
\lk(X,\tau)= \{\sigma\in X: \, \sigma\cap\tau=\emptyset,\, \sigma\cup\tau \in X\}.
\] 
Note that $\lk(X,\tau)=\emptyset$ unless $\tau\in X$. If $\tau=\{v\}$, we write  $\lk(X,v)=\lk(X,\{v\})$.

Let $X$ and $Y$ be two simplicial complexes on disjoint vertex sets. We define the \emph{join} of $X$ and $Y$ to be the simplicial complex
\[
	X\ast Y= \{ \sigma\cup \tau:\, \sigma\in X, \, \tau\in Y\}.
\]

Let $v\in V$. If $v\in \tau$ for every maximal face $\tau\in X$ we say that $X$ is a \emph{cone over $v$}.

For $U\subset V$, we denote by $2^U=\{\sigma:\, \sigma\subset U\}$ the \emph{complete complex} on vertex set $U$.

\subsection{Homology and Leray numbers}

For $i\geq -1$, let $\tilde{H}_i(X)$ be the $i$-th reduced homology group of $X$ with real coefficients. We say that $X$ is $d$-\emph{Leray} if every induced subcomplex $Y$ of $X$ has trivial homology in dimensions $d$ and above, namely $\tilde{H}_i(Y)=0$ for $i \ge d$. The \emph{Leray number} of $X$, denoted by $L(X)$, is the minimum integer $d$ such that $X$ is $d$-Leray.

The notions of $d$-collapsibility and $d$-Lerayness of simplicial complexes were introduced by Wegner in \cite{Weg75}. He observed the following simple fact:
\begin{lemma}[Wegner \cite{Weg75}]\label{lem:coll_leray}
	Let $X$ be a simplicial complex. Then,
	\[
		C(X)\geq L(X).
	\]
\end{lemma}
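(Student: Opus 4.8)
The plan is to prove the implication that a $d$-collapsible complex is $d$-Leray; applying this with $d=C(X)$ then yields $L(X)\le C(X)$. I would break the argument into two ingredients. Ingredient \textbf{(A)}: $d$-collapsibility passes to induced subcomplexes, i.e. if $X$ is $d$-collapsible and $W\subseteq V$, then $X[W]$ is $d$-collapsible. Ingredient \textbf{(B)}: every $d$-collapsible complex $Y$ satisfies $\tilde H_i(Y)=0$ for all $i\ge d$. Granting both, each induced subcomplex $X[W]$ is $d$-collapsible by (A), and hence has vanishing reduced homology in dimensions $\ge d$ by (B); this is exactly the definition of $d$-Lerayness.

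For ingredient (A) I would analyze the effect of a single elementary $d$-collapse $X\xrightarrow{\sigma}X'$ after restricting to $W$. If $\sigma\not\subseteq W$, then no face of $X[W]$ contains $\sigma$, so $X[W]=X'[W]$ and nothing happens. If $\sigma\subseteq W$, set $\tau'=\tau\cap W$; the faces deleted from $X[W]$ are precisely those in the interval $\{\eta:\sigma\subseteq\eta\subseteq\tau'\}$. The point to verify is that $\sigma$ remains a free face of $X[W]$ with unique maximal coface $\tau'$: if $\rho\in X[W]$ and $\sigma\subseteq\rho$, then $\rho\subseteq\tau$ (because $\tau$ is the unique maximal face of $X$ containing $\sigma$) and $\rho\subseteq W$, whence $\rho\subseteq\tau'$. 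Since $|\sigma|\le d$ is unchanged, this exhibits $X[W]\xrightarrow{\sigma}X'[W]$ as an elementary $d$-collapse (when $\tau'=\sigma$ the interval is just $\{\sigma\}$ and the collapse simply removes the now-maximal face $\sigma$). Concatenating these steps along a collapsing sequence for $X$ produces one for $X[W]$.

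For ingredient (B) I would induct on the length of a collapsing sequence for $Y$, the base case being the void complex, whose reduced homology vanishes in all non-negative degrees. For the inductive step, let $Y\xrightarrow{\sigma}Y'$ be an elementary $d$-collapse with $Y'$ already satisfying the conclusion, and examine the reduced long exact sequence of the pair $(Y,Y')$. The relative chain complex $C_*(Y)/C_*(Y')$ is spanned by the faces $\sigma\cup A$ with $A\subseteq\tau\setminus\sigma$, and under this indexing its boundary map is that of the augmented chain complex of the simplex $2^{\tau\setminus\sigma}$, shifted up in degree by $|\sigma|$. Hence $H_i(Y,Y')=0$ for all $i$ when $\sigma\subsetneq\tau$ (a simplex is acyclic), while when $\sigma=\tau$ the relative homology is a single copy of $\mathbb{R}$ in degree $|\sigma|-1$. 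Since $|\sigma|\le d$, in both cases $H_i(Y,Y')=0$ for every $i\ge d$. Feeding this together with the inductive hypothesis $\tilde H_i(Y')=0$ for $i\ge d$ into the long exact sequence squeezes $\tilde H_i(Y)=0$ for all $i\ge d$.

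The computational heart is the relative-homology calculation in (B): it is exactly there that the collapsibility bound $|\sigma|\le d$ gets converted into the homological vanishing range $i\ge d$, through the degree shift of the interval $\{\eta:\sigma\subseteq\eta\subseteq\tau\}$. The conceptually essential, and easily overlooked, step is ingredient (A), because $d$-Lerayness quantifies over all induced subcomplexes while a collapsing sequence is given only for $X$ itself; checking that the free-face property survives passage to $X[W]$ (that $\tau\cap W$ is still the unique maximal coface of $\sigma$) is the crux there, and I expect this to be the main obstacle.
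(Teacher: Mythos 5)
The paper does not prove this lemma at all: it is quoted as Wegner's observation and cited to \cite{Weg75}, so there is no in-paper proof to compare against. Your reconstruction is correct and complete, and it is essentially the standard (Wegner-style) argument: ingredient (A) is exactly the right reduction, and your verification that the free-face property survives restriction (any $\rho\in X[W]$ containing $\sigma$ satisfies $\rho\subseteq\tau\cap W=\tau'$, so $\tau'$ is the unique maximal coface of $\sigma$ in $X[W]$, and the deleted interval in $X[W]$ is precisely $\{\eta:\sigma\subseteq\eta\subseteq\tau'\}$) is the crux, as you anticipated. Ingredient (B) is also sound: the relative chain complex of the pair $(Y,Y')$ is the augmented chain complex of $2^{\tau\setminus\sigma}$ shifted by $|\sigma|$, so it is acyclic when $\sigma\subsetneq\tau$ and contributes a single $\mathbb{R}$ in degree $|\sigma|-1\leq d-1$ when $\sigma=\tau$; either way the long exact sequence of augmented chain complexes forces $\tilde H_i(Y)=0$ for $i\geq d$ given the inductive hypothesis on $Y'$. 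Two small points handled correctly that are easy to get wrong: the degenerate collapse $\tau'=\sigma$ (removal of a single maximal face of size at most $d$, which can only change homology in degree $|\sigma|-1<d$), and the base case, where one should work with augmented chains so that the exact sequence remains valid when $Y'$ is the void complex.
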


In Section \ref{sec:lowerbound} we will use some well known facts about the homology of simplicial complexes (see e.g. \cite{bjorner1995topological}):

\begin{theorem}\label{thm:kunneth}
	Let $X=X_1\ast X_2\ast \cdots \ast X_m$. Then,
	\[
	\tilde{H}_i(X)\cong \bigoplus_{\substack{i_1 +\cdots +i_m =i-m+1,\\
		-1\leq i_j \leq \dim(X_j)\,\, \forall j\in [m]}} \tilde{H}_{i_1}(X_1)\otimes  \cdots \otimes \tilde{H}_{i_m}(X_m).
	\]	
\end{theorem}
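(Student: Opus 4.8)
The plan is to reduce to the case $m=2$ by induction on $m$, using associativity of the join, and to settle $m=2$ at the chain level, where the join becomes a tensor product of augmented chain complexes. Because we work with real coefficients, the field version of the algebraic K\"unneth theorem then applies with no Tor correction terms, giving the asserted tensor-product decomposition directly.

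For the base case $X = X_1 \ast X_2$, I would use augmented chain complexes over $\mathbb{R}$, but indexed by vertex count rather than dimension: let $D_q(Y)$ be spanned by the faces of $Y$ with exactly $q$ vertices, so that $D_0(Y)$ is spanned by the empty face and $D_q(Y) = \tilde{C}_{q-1}(Y)$ for the usual augmented chain complex $\tilde{C}_*$. With this grading $H_q(D_*(Y)) = \tilde{H}_{q-1}(Y)$. Fix a total order on $V(X_1)\cup V(X_2)$ placing all of $V(X_1)$ before all of $V(X_2)$. By the definition of the join on disjoint vertex sets, every face of $X$ is uniquely $\sigma\cupdot\tau$ with $\sigma\in X_1$, $\tau\in X_2$, and $|\sigma\cupdot\tau| = |\sigma|+|\tau|$, so $\sigma\cupdot\tau \mapsto \sigma\otimes\tau$ is an isomorphism of graded vector spaces
\[
D_q(X) \xrightarrow{\ \cong\ } \big(D_*(X_1)\otimes D_*(X_2)\big)_q.
\]

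The key point is that this is an isomorphism of \emph{chain} complexes. Because the vertices of $X_1$ all precede those of $X_2$, the simplicial boundary separates into the terms deleting a vertex of $\sigma$ and those deleting a vertex of $\tau$, and a direct computation with the ordering gives
\[
\partial(\sigma\cupdot\tau) = (\partial\sigma)\cupdot\tau + (-1)^{|\sigma|}\,\sigma\cupdot(\partial\tau).
\]
Since $|\sigma|$ is exactly the degree of $\sigma$ in the $D$-grading, this is precisely the Koszul differential of the tensor product complex; verifying this sign is the main (and essentially the only) obstacle, and the vertex-count grading is what makes the signs agree on the nose rather than up to a twist. Applying the K\"unneth theorem over the field $\mathbb{R}$ then yields
\[
\tilde{H}_i(X) = H_{i+1}(D_*(X)) \cong \bigoplus_{a+b=i+1} H_a(D_*(X_1))\otimes H_b(D_*(X_2)) = \bigoplus_{a+b=i+1} \tilde{H}_{a-1}(X_1)\otimes \tilde{H}_{b-1}(X_2),
\]
and the substitution $i_1 = a-1$, $i_2 = b-1$ turns $a+b = i+1$ into $i_1+i_2 = i-1$, which is the $m=2$ case; the constraints $-1\le i_j\le \dim(X_j)$ merely discard summands that already vanish, since $\tilde{H}_{i_j}(X_j)=0$ outside that range.

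For the inductive step I would apply the $m=2$ result to $X = (X_1\ast\cdots\ast X_{m-1})\ast X_m$, insert the inductive formula for $\tilde{H}_*(X_1\ast\cdots\ast X_{m-1})$, then distribute the tensor product over the direct sum and reindex; the total-degree condition $i_1+\cdots+i_m = i-m+1$ accumulates from one $-1$ shift at each of the $m-1$ successive join operations. As an alternative to the chain-level argument, one could instead invoke the homotopy equivalence $|X_1|\ast|X_2| \simeq \Sigma(|X_1|\wedge|X_2|)$ together with the suspension isomorphism and the topological K\"unneth theorem, but the chain-level route keeps everything self-contained and makes the bookkeeping of degree shifts most transparent.
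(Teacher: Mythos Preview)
Your argument is correct and is essentially the standard proof of the join K\"unneth formula over a field: identify the augmented chain complex of a join with the tensor product of the augmented chain complexes (using the vertex-count grading to make the Koszul sign work out), apply the algebraic K\"unneth theorem over $\mathbb{R}$, and then induct on $m$.

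There is nothing to compare against, however: the paper does not prove Theorem~\ref{thm:kunneth}. It is listed among the ``well known facts about the homology of simplicial complexes'' in Section~\ref{sec:prelim} and is simply cited (to \cite{bjorner1995topological}) without proof. So your proposal is not an alternative to the paper's argument but rather a self-contained justification of a result the paper takes as background.
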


Let $\mathcal{F}=\{A_1,\ldots, A_m\}$ be a family of sets. The \emph{nerve} of $\mathcal{F}$ is the simplicial complex
\[
	N(\mathcal{F})=\{ I\subset [m] :\, \cap_{i\in I} A_i\neq \emptyset\}.
\]

The following is a simple version of the Nerve Theorem:
\begin{theorem}\label{thm:nerve}
	Let $X$ be a simplicial complex with maximal faces $\sigma_1,\ldots,\sigma_m$. Then,
	\[
		\tilde{H}_i(X)\cong \tilde{H}_i(N(\{\sigma_1,\ldots,\sigma_m\}))
	\]
	for all $i\geq -1$.
\end{theorem}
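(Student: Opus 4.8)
The plan is to exhibit the statement as an instance of the classical Nerve Lemma for a \emph{good cover}, and then, to keep the argument self-contained at the level of homology (which is all that is claimed), to read the isomorphism off a Mayer--Vietoris spectral sequence. The crucial input is a purely combinatorial observation: the maximal faces cover $X$ by full simplices whose nonempty intersections are again full simplices. Concretely, for each $i$ set $U_i=2^{\sigma_i}$, the complete complex on the vertex set $\sigma_i$. Since every face of $X$ is contained in some maximal face, $X=\bigcup_{i=1}^m U_i$. For any $I\subseteq[m]$ one has $\bigcap_{i\in I}U_i=2^{\bigcap_{i\in I}\sigma_i}$, which is a full simplex --- hence contractible --- when $\bigcap_{i\in I}\sigma_i\neq\emptyset$, and is (geometrically) empty otherwise. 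Thus $\{U_i\}$ is a good cover of $X$, and its nerve is exactly $\{I\subseteq[m]:\bigcap_{i\in I}\sigma_i\neq\emptyset\}=N(\{\sigma_1,\dots,\sigma_m\})$. The topological Nerve Lemma then already gives a homotopy equivalence $X\simeq N(\{\sigma_1,\dots,\sigma_m\})$, and in particular the asserted isomorphism on reduced homology.

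To avoid invoking the topological Nerve Lemma as a black box, I would run the Mayer--Vietoris (\v{C}ech-to-simplicial) spectral sequence of the cover $\{U_i\}$, whose first page is $E_1^{p,q}=\bigoplus_{|I|=p+1}\tilde{H}_q\bigl(\bigcap_{i\in I}U_i\bigr)$ and which converges to $\tilde{H}_{p+q}(X)$. By the contractibility computation above, each nonempty intersection contributes $\tilde{H}_q=0$ for $q>0$ and a single copy of $\Rea$ in degree $0$, so every row with $q>0$ vanishes and $E_1^{p,0}=\bigoplus_{I\in N,\,|I|=p+1}\Rea$ is precisely the simplicial chain group $C_p(N;\Rea)$ of the nerve. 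The differential $d_1$ is the alternating-sum boundary of $N$, so $E_2^{p,0}=\tilde{H}_p(N)$ while $E_2^{p,q}=0$ for $q>0$; the sequence degenerates at $E_2$ and yields $\tilde{H}_i(X)\cong\tilde{H}_i(N)$ for all $i\geq-1$. A convenient feature of this route is that it sums over \emph{all} index sets $I$, so it never requires the intersections $\sigma_i\cap\sigma_j$ to be distinct or maximal; the nerve condition emerges automatically.

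If a fully elementary argument is preferred, I would instead induct on $m$, comparing two long exact Mayer--Vietoris sequences via the five lemma. Writing $A=\bigcup_{i<m}U_i$ and $B=U_m$, one has $X=A\cup B$ with $A\cap B=\bigcup_{i<m}2^{\sigma_i\cap\sigma_m}$, matched against $N=(N\setminus m)\cup\overline{\st}(N,m)$; here $B$ and the closed star $\overline{\st}(N,m)$ are contractible, $N\setminus m=N(\{\sigma_1,\dots,\sigma_{m-1}\})$, and a direct check shows the nerve of the cover $\{\sigma_i\cap\sigma_m\}_{i<m}$ of $A\cap B$ equals $\lk(N,m)$. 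The inductive hypothesis then identifies $\tilde{H}_*(A)$ with $\tilde{H}_*(N\setminus m)$ and $\tilde{H}_*(A\cap B)$ with $\tilde{H}_*(\lk(N,m))$, and compatibility of the connecting maps finishes the step. I expect the main obstacle to lie exactly in this inductive route: the simplices $\sigma_i\cap\sigma_m$ need not be the \emph{maximal} faces of $A\cap B$, so to feed $A\cap B$ back into the hypothesis I would first restate the theorem for an arbitrary (possibly redundant, non-maximal) cover by full simplices $\{2^{\rho_j}\}$ and induct on the number of simplices, verifying that the chain-level comparison maps intertwine the two Mayer--Vietoris boundary operators. The spectral-sequence argument circumvents precisely this bookkeeping, which is why I would present it as the primary proof.
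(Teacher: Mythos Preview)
Your argument is correct and is a standard route to the Nerve Theorem: the cover of $X$ by the full simplices $2^{\sigma_i}$ is a good cover, and either the spectral-sequence computation or the Mayer--Vietoris induction yields the homology isomorphism. Your remark that the inductive version should really be stated for an arbitrary cover by full simplices (to handle the non-maximality of the $\sigma_i\cap\sigma_m$) is exactly right.

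However, there is nothing to compare against: the paper does not prove this theorem. It is quoted in the preliminaries section as a well-known fact, with a reference to Bj\"orner's survey \cite{bjorner1995topological}, and is then used as a black box in Section~\ref{sec:lowerbound}. So your proposal supplies a proof where the paper simply cites one.
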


Let $X$ be a simplicial complex on vertex set $V$. The {\em combinatorial Alexander dual} of $X$ is the complex
\[
D(X) = \{\sigma \subset V: V \setminus \sigma \notin X\}.
\]
It is easy to check that the maximal faces of $D(X)$ are the complements of the missing faces of $X$. Similarly, the missing faces of $D(X)$ are the complements of the maximal faces of $X$. Alexander duality relates the homology groups of $X$ with those of $D(X)$ (see e.g. \cite{bjorner2009note}):
\begin{theorem}[Alexander duality]\label{thm:alexander}
	If $V\notin X$ then for all $-1\leq i\leq |V|-2$
	\[
		\tilde{H}_i(D(X))\cong \tilde{H}_{|V|-i-3}(X).
	\]
\end{theorem}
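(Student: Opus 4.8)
The plan is to prove Alexander duality by passing through the combinatorial model of the complex and its dual, using the standard relationship between the (co)chain complexes of $X$ and $D(X)$. Since we work with real coefficients, reduced homology and reduced cohomology are naturally isomorphic in each dimension, so I will freely identify $\tilde H_i$ with $\tilde H^i$ when convenient; this lets me relate homology of $D(X)$ to cohomology of $X$, which is where the duality is most transparent.

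First I would fix an ordering $v_1 < v_2 < \cdots < v_N$ of the vertex set $V$, where $N = |V|$, and use it to orient all simplices of both $X$ and $D(X)$. The central observation is a duality between faces: a set $\sigma \subset V$ is a face of $D(X)$ exactly when $V \setminus \sigma \notin X$, by definition. Complementation $\sigma \mapsto V \setminus \sigma$ is a bijection from subsets of $V$ of size $k+1$ to subsets of size $N-k-1$, and it carries non-faces of $X$ to faces of $D(X)$. I would set up, for each dimension, a linear isomorphism between the reduced chain group $\tilde C_i(D(X))$ and the reduced \emph{cochain} group $\tilde C^{N-i-2}(X)$, induced by this complementation map on the level of oriented simplices (with an appropriate sign convention so that the identification respects the ambient complex $2^V$). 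The key algebraic step is then to check that, up to sign, this complementation isomorphism intertwines the boundary operator $\partial$ of $D(X)$ with the coboundary operator $\delta$ of $X$: removing a vertex $v$ from a face $\sigma$ of $D(X)$ corresponds to adding $v$ to the complement $V \setminus \sigma$ as a face of $X$. Matching the induced signs is the genuinely fiddly part of the argument.

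Once the chain-cochain isomorphism is established and shown to commute with the differentials (up to a global sign that does not affect homology), it follows immediately that
\[
\tilde H_i(D(X)) \cong \tilde H^{N-i-2}(X).
\]
Then, using the universal-coefficient isomorphism $\tilde H^{j}(X) \cong \tilde H_{j}(X)$ over the field $\Rea$, I would rewrite the right-hand side as $\tilde H_{N-i-2}(X)$. Writing $N = |V|$ and renaming indices, this is exactly $\tilde H_i(D(X)) \cong \tilde H_{|V|-i-3}(X)$, as claimed, valid in the stated range $-1 \le i \le |V|-2$. The hypothesis $V \notin X$ is precisely what guarantees that $\emptyset \in D(X)$ and that the reduced chain complexes behave correctly at the top and bottom dimensions, so I would use it to justify that the reduced (rather than unreduced) theories match up at the boundary cases $i = -1$ and $i = |V|-2$.

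The main obstacle I anticipate is the bookkeeping of orientations and signs: the complementation map on simplices does not preserve the induced orientations in a naive way, and one must insert a sign depending on the position of each vertex in the fixed ordering so that $\partial$ and $\delta$ correspond exactly. Getting these signs globally consistent — so that the diagram of chain maps and differentials genuinely commutes rather than merely commuting up to a sign that varies with the simplex — is the technical heart of the proof. Everything else (the field-coefficient identification of homology and cohomology, the dimension shift, and the handling of the reduced theory at the extreme dimensions) is routine once the sign-corrected chain isomorphism is in place. Since this is a standard and well-documented result, I would in practice cite \cite{bjorner2009note} for the detailed sign verification and present only the conceptual skeleton above.
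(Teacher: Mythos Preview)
The paper does not prove Alexander duality; it is stated as a background result with a citation to \cite{bjorner2009note}, so there is no proof in the paper to compare against.

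Your outline follows the standard combinatorial argument and is correct in spirit, but there is one concrete slip that is more than sign bookkeeping. You claim complementation gives an isomorphism $\tilde C_i(D(X)) \cong \tilde C^{N-i-2}(X)$. This already fails at the level of vector spaces: the basis of $\tilde C^{N-i-2}(X)$ consists of the $(N-i-2)$-dimensional \emph{faces} of $X$, whereas, as you yourself observe, complementation sends $i$-faces of $D(X)$ to $(N-i-2)$-dimensional \emph{non}-faces of $X$. The correct target is the relative group $C_{N-i-2}(2^V,X)$, whose basis is precisely those non-faces. Under complementation the boundary on $C_*(D(X))$ matches (up to the signs you anticipate) the coboundary on $C^*(2^V,X)$, yielding $\tilde H_i(D(X)) \cong H^{N-i-2}(2^V,X)$; over $\Rea$ this is $H_{N-i-2}(2^V,X)$, and the long exact sequence of the pair $(2^V,X)$ together with contractibility of $2^V$ gives $H_{N-i-2}(2^V,X)\cong \tilde H_{N-i-3}(X)$. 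With the relative complex inserted in place of $\tilde C^*(X)$, the remainder of your sketch---including the use of field coefficients and the role of the hypothesis $V\notin X$ at the extreme dimensions---goes through, and indeed this is essentially the argument in the reference you cite.
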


\subsection{Graphs}

Throughout this paper, we assume every graph is simple and undirected.
Let $G = (V,E)$ be a graph.

For a vertex subset $U \subset V$, the subgraph of $G$ \emph{induced} by $U$ is the graph $G[U] = \left(U, \{e\in E:\, e\subset U\}\right).$
The subset $U \subset V$ is called a \emph{clique} in $G$ if the induced subgraph $G[U]$ is the complete graph on vertex set $U$.

For any vertex $v \in V$, we define the \emph{deletion} of $v$ in $G$ to be the induced subgraph $G \setminus v = G[V\setminus\{v\}]$.

For each $v \in V$, we define the \emph{open neighborhood} of $v$ in $G$ as the vertex subset \[N_G(v) = \{u \in V: u\text{ is adjacent to }v\},\] and we define the \emph{closed neighborhood} of $v$ in $G$ as \[N_G[v] = \{v\} \cup N_G(v).\]
For a set $A\subset V$, let \[N_G(A)=\bigcup_{u\in A} N_G(u).\]
A vertex $v\in V$ is called a \emph{simplicial vertex} if $N_G[v]$ is a clique.
The \emph{degree} of $v$ in $G$ is the number $\deg_G(v) = |N_G(v)|$.

We say $G$ is \emph{$k$-colorable} (or $k$-partite) if we can partition the vertex set $V$ into $k$ parts so that each part is independent in $G$.
The following is a classical result in graph theory that states a relation between the maximum degree and the $k$-colorability of $G$.
\begin{theorem}[Brooks' Theorem~\cite{brooks1941colouring}]\label{thm:brooks}
	Let $G$ be a connected graph with maximum degree $k$.
	Then $G$ is $k$-colorable unless $G$ is the complete graph $K_{k+1}$ or an odd cycle.
\end{theorem}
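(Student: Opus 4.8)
The plan is to establish the statement by \emph{greedy coloring} with respect to a well-chosen linear order on the vertices, using the elementary principle that if we process the vertices as $v_1,\dots,v_m$ and give each $v_i$ the smallest color avoided by its already-colored neighbors, then the number of colors used is at most $1+\max_i |\{j<i:\, v_j\in N_G(v_i)\}|$. Thus it suffices either to order $V$ so that every vertex has at most $k-1$ earlier neighbors, or to arrange that the single potentially problematic vertex (one with all $k$ of its neighbors earlier) sees at most $k-1$ \emph{distinct} colors among them. Since $k$-colorability is decided component by component, I would assume $G$ connected throughout.

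First I would dispose of the case in which $G$ is not $k$-regular, which needs no hypothesis beyond connectedness. Pick a vertex $u$ with $\deg_G(u)<k$, run a breadth-first search from $u$, and list the vertices in reverse order of discovery, so that $u=v_m$ is last. Every $v_i$ with $i<m$ then has its BFS-parent appearing later in the order, hence at most $k-1$ earlier neighbors, while $v_m=u$ has at most $\deg_G(u)\le k-1$ earlier neighbors; greedy coloring therefore uses at most $k$ colors, so $G$ is $k$-colorable.

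It remains to treat the $k$-regular case. When $k\le 2$ a connected $k$-regular graph is a single vertex, a single edge, or a cycle, and among these only the odd cycles fail to be $k$-colorable---precisely the stated exception. So assume $k\ge 3$. If $G$ has a cut vertex I would split it into its blocks (each of maximum degree at most $k$), color each by induction on the number of vertices, and glue the colorings by permuting colors so that they agree on the shared cut vertices. Here the inductive exceptions cannot arise: a block equal to $K_{k+1}$ would force its cut vertex to have all $k$ of its neighbors inside that single block, contradicting that a cut vertex meets at least two blocks, and a block that is an odd cycle would force $k=2$. Hence I may assume $G$ is $2$-connected with $k\ge 3$. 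If $G$ is complete it is $K_{k+1}$, the other stated exception, so assume $G$ is not complete.

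The crux is then to produce three vertices $a,b,c$ with $a,b\in N_G(c)$, $a\not\sim b$, and $G-\{a,b\}$ connected. Given such a triple, I would order the vertices by placing $a,b$ first and then listing $G-\{a,b\}$ in reverse BFS order rooted at $c$, so that $c=v_m$. Greedy coloring assigns $a$ and $b$ the same color $1$ (they are non-adjacent and both come first); every interior vertex has its BFS-parent later and hence at most $k-1$ earlier neighbors; and the two neighbors $a,b$ of $c$ share color $1$, so $c$ sees at most $k-1$ distinct colors among its $k$ neighbors. Thus at most $k$ colors are used. The main obstacle, and the only point where $2$-connectivity and $k\ge 3$ are genuinely used, is the existence of this triple. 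If $G$ is $3$-connected I would take a pair $a,b$ at distance $2$ (which exists because $G$ is connected and not complete) with a common neighbor $c$, so that $G-\{a,b\}$ remains connected by $3$-connectivity. If $G$ is $2$- but not $3$-connected, I would choose $c$ so that $G-c$ has a cut vertex, pass to the block tree of $G-c$, and take $a,b$ to be neighbors of $c$ lying in the interiors of two distinct end-blocks: $2$-connectivity of $G$ forces $c$ to have such a neighbor in each end-block, these lie in different blocks and are therefore non-adjacent, and a short argument shows $G-\{a,b\}$ stays connected. Carefully verifying this last, more delicate, subcase is where the real work lies.
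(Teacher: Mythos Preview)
The paper does not prove Brooks' Theorem at all: it is stated as a classical result with a citation to \cite{brooks1941colouring} and is used only as a black box in the proof of Theorem~\ref{cor:bounded degree}. There is therefore no ``paper's own proof'' to compare your attempt against.

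That said, your outline is correct and follows the standard Lov\'asz-style argument. The non-regular case, the $k\le 2$ case, and the reduction to $2$-connected graphs via block decomposition are all handled properly (your observation that no block can be $K_{k+1}$, since the shared cut vertex would then have all $k$ neighbors inside that block, is exactly right). In the $2$-connected, $k\ge 3$, non-complete case, your construction of the triple $a,b,c$ is the standard one; the ``short argument'' you defer---that $G-\{a,b\}$ is connected when $a,b$ lie in the interiors of distinct end-blocks of $G-c$---goes through because each end-block is $2$-connected on at least three vertices (a $K_2$ end-block would give $a$ degree at most $2<k$ in $G$), so $(G-c)-\{a,b\}$ is connected, and $c$ has a third neighbor besides $a,b$ since $k\ge 3$.
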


The complete bipartite graph $K_{1,3}$ is called a \emph{claw}.
A graph is said to be \emph{claw-free} if it does not have a claw as an induced subgraph.

We say a graph is \emph{chordal} if it does not contain a cycle of length at least $4$ as an induced subgraph. Chordal graphs satisfy the following special property:
\begin{theorem}[Lekkerkerker, Boland \cite{lekkeikerker1962representation}]
Every chordal graph contains a simplicial vertex.
\end{theorem}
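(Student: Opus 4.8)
The plan is to prove, by induction on $|V|$, the slightly stronger statement that every chordal graph is either complete or contains two non-adjacent simplicial vertices; since every vertex of a complete graph is simplicial, this implies the theorem. So assume $G$ is not complete and fix two non-adjacent vertices $a,b\in V$. Let $S\subset V$ be a minimal $a$--$b$ separator, that is, a minimal set such that $a$ and $b$ lie in different connected components of $G\setminus S$, and write $C_a$ and $C_b$ for the components of $G\setminus S$ containing $a$ and $b$ respectively. I will use repeatedly that every induced subgraph of a chordal graph is again chordal, since an induced cycle of the subgraph is also an induced cycle of $G$.

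The key lemma, and the step I expect to be the main obstacle, is that $S$ is a clique. To prove it, fix distinct $x,y\in S$ and suppose, for contradiction, that $x$ and $y$ are non-adjacent. Since $S$ is a minimal separator, neither $x$ nor $y$ can be deleted from $S$ while still separating $a$ from $b$, which forces each of them to have a neighbor in $C_a$ and a neighbor in $C_b$. Using the connectivity of $C_a$, choose a shortest $x$--$y$ path with all interior vertices in $C_a$, and similarly a shortest $x$--$y$ path with all interior vertices in $C_b$; because $x$ and $y$ are non-adjacent, both paths have nonempty interior, so concatenating them produces a cycle of length at least $4$. As $G$ is chordal this cycle has a chord, but no chord can join the two interior segments (they lie in distinct components of $G\setminus S$), and no chord can join two non-consecutive vertices within a single segment (each segment is a shortest, hence induced, path). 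The only remaining candidate is the edge $xy$, contradicting our assumption. Hence any two vertices of $S$ are adjacent and $S$ is a clique.

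With the lemma in hand the induction closes quickly. The induced subgraphs $G[C_a\cup S]$ and $G[C_b\cup S]$ are chordal and strictly smaller than $G$, since each omits at least one vertex of the opposite component. Applying the induction hypothesis to $G[C_a\cup S]$: if it is complete, then any vertex of the nonempty set $C_a$ is simplicial in it; otherwise it has two non-adjacent simplicial vertices, and since $S$ is a clique these cannot both lie in $S$, so at least one lies in $C_a$. Either way we obtain a vertex $v_a\in C_a$ that is simplicial in $G[C_a\cup S]$. Because $C_a$ is a component of $G\setminus S$ and $v_a\notin S$, every neighbor of $v_a$ in $G$ lies in $C_a\cup S$, so $N_G[v_a]$ coincides with the closed neighborhood of $v_a$ in $G[C_a\cup S]$ and is therefore a clique; thus $v_a$ is simplicial in $G$. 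Symmetrically we obtain a vertex $v_b\in C_b$ that is simplicial in $G$. Finally $v_a$ and $v_b$ lie in different components of $G\setminus S$, hence are non-adjacent, completing the induction and the proof.
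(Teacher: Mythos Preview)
Your proof is correct and is essentially the classical argument (due to Dirac, and implicit in Lekkerkerker--Boland): strengthen to ``complete or two non-adjacent simplicial vertices,'' show that minimal vertex separators in a chordal graph are cliques via the two-shortest-paths cycle argument, and recurse into $G[C_a\cup S]$ and $G[C_b\cup S]$. The chord analysis is complete once one notes that endpoints $x,y$ also cannot be adjacent to non-consecutive interior vertices of either shortest path, which you implicitly use.

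There is nothing to compare against here: the paper does not supply a proof of this theorem. It is quoted in Section~\ref{sec:prelim} as a preliminary fact from the literature and is then used as a black box in the proof of Theorem~\ref{chordal collapse}. Your write-up would serve perfectly well as a self-contained justification if one were desired.
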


\section{Upper bounds for collapsibility numbers}\label{sec:tools}

In this section we present our main technical tools for proving $d$-collapsibility of a simplicial complex.
Most of the bounds presented in this section rely on the inductive application of the following two basic results, due to Tancer:

\begin{lemma}[Tancer {\cite[Prop.1.2]{tancer2011strong}}]
\label{lemma:tancer}
Let $X$ be a simplicial complex on vertex set $V$, and let $v\in V$. Then,
\[
    C(X)\leq \max \{ C(X\setminus v), C(\lk(X,v))+1\}.
\]
\end{lemma}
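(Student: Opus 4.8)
The plan is to build an explicit $d$-collapse sequence for $X$, where $d := \max\{C(X\setminus v),\, C(\lk(X,v))+1\}$, carried out in two phases: first peel off every face of $X$ that contains $v$, then collapse what remains, which will be exactly $X\setminus v$. Write $L := \lk(X,v)$ and fix optimal collapse sequences once and for all: a $C(L)$-collapse sequence reducing $L$ to the void complex, and a $C(X\setminus v)$-collapse sequence reducing $X\setminus v$ to the void complex.

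For the first phase I would exploit the bijection $\gamma \mapsto \gamma\cup\{v\}$ between the faces of $L$ and the faces of $X$ that contain $v$. The idea is to \emph{lift} each elementary collapse of $L$ to one of $X$: if an elementary collapse of the current link-complex uses a free face $\eta$ sitting inside its unique maximal face $\zeta$, then I perform the elementary collapse $X \xrightarrow{\eta\cup\{v\}} X'$. Since $|\eta\cup\{v\}| = |\eta|+1 \le C(L)+1 \le d$, this is a legitimate $d$-collapse, \emph{provided} $\eta\cup\{v\}$ really is a free face. I would maintain, as a loop invariant, that after processing the first $i$ link-collapses the current complex equals $X\setminus v$ together with $\{\gamma\cup\{v\} : \gamma \in L_{i+1}\}$, where $L_{i+1}$ denotes the link after $i$ of its collapses; when $L$ has been fully collapsed, every face containing $v$ has been removed and we are left with precisely $X\setminus v$.

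The crux, and the step I expect to require the most care, is verifying that $\eta\cup\{v\}$ is genuinely a free face of the whole complex at each stage, since freeness is a global condition and not merely a condition inside the star of $v$. Here the key observation is that any face containing $\eta\cup\{v\}$ must contain $v$, hence by the invariant corresponds to a face of the current link that contains $\eta$; since $\eta$ is free in the link with unique maximal face $\zeta$, every such face lies in $\zeta$, so every face of $X$ containing $\eta\cup\{v\}$ lies in $\zeta\cup\{v\}$. Moreover $\zeta\cup\{v\}$ is itself maximal in the current complex, for a strictly larger face would again contain $v$ and contradict the maximality of $\zeta$ in the link. Thus $\eta\cup\{v\}$ is contained in the unique maximal face $\zeta\cup\{v\}$, as required, and the lifted collapse removes exactly the faces $\{\gamma\cup\{v\}: \eta\subseteq\gamma\subseteq\zeta\}$, mirroring the link collapse and preserving the invariant.

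Once $X$ has been reduced to $X\setminus v$, the second phase simply replays the $C(X\setminus v)$-collapse sequence: since no face containing $v$ survives, each of its elementary collapses is already valid in the current complex, and its free faces have size at most $C(X\setminus v)\le d$. Concatenating the two phases yields a reduction of $X$ to the void complex through elementary $d$-collapses, so $C(X)\le d$, as claimed.
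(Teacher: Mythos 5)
Your proof is correct: the two-phase argument (lifting each elementary collapse of $\lk(X,v)$ to a collapse of the star via $\eta\mapsto\eta\cup\{v\}$, with the invariant that the current complex is $X\setminus v$ together with $\{\gamma\cup\{v\}:\gamma\in L_i\}$, then replaying the collapse sequence of $X\setminus v$) is exactly the standard proof of this lemma, and your verification of freeness of $\eta\cup\{v\}$ handles the one genuinely delicate point. The paper itself gives no proof here but cites Tancer, whose argument is essentially the one you reconstructed, so there is nothing to add.
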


\begin{lemma}[Tancer {\cite[Prop. 3.1]{tancer2011strong}}]
\label{lemma:conepoint}
Let $X$ be a simplicial complex on vertex set $V$, and let $v\in V$ such that $X$ is a cone over $v$. Then,
\[
    C(X)=C(X\setminus v).
\]
\end{lemma}

It will be helpful to state the following straightforward generalization of Lemma \ref{lemma:conepoint}.
\begin{lemma}\label{lemma:join_with_simplex}
Let $U$ and $V$ be two disjoint sets, and let $X$ be a simplicial complex on vertex set $V$. Then,
\[	
	C(X\ast 2^U)=C(X).
\]	
\end{lemma}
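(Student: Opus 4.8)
The plan is to reduce the statement to the conepoint lemma (Lemma \ref{lemma:conepoint}) and then induct on the size of $U$. The key observation is that a join with the complete complex $2^U$ can be assembled one vertex at a time: using associativity of the join together with the obvious factorization $2^U = 2^{U\setminus\{u\}} \ast 2^{\{u\}}$ for any $u\in U$, adjoining a single vertex $u$ via $2^{\{u\}}$ amounts to forming a cone over $u$.

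First I would dispose of the base case $U=\emptyset$, where $2^\emptyset = \{\emptyset\}$ and $X\ast\{\emptyset\}=X$, so there is nothing to prove. For the inductive step, fix $u\in U$ and set $U'=U\setminus\{u\}$. Writing $Y = X\ast 2^{U'}$, associativity of the join gives $X\ast 2^U = Y\ast 2^{\{u\}}$. I would then verify that $Y\ast 2^{\{u\}}$ is a cone over $u$: its maximal faces are exactly the sets $\sigma\cup\{u\}$ with $\sigma$ a maximal face of $Y$, so every maximal face contains $u$. Hence Lemma \ref{lemma:conepoint} applies and yields $C(Y\ast 2^{\{u\}}) = C((Y\ast 2^{\{u\}})\setminus u)$. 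Since deleting $u$ removes precisely the faces containing $u$, one checks $(Y\ast 2^{\{u\}})\setminus u = Y$, so that $C(X\ast 2^U)=C(Y)=C(X\ast 2^{U'})$. The induction hypothesis then gives $C(X\ast 2^{U'})=C(X)$, completing the step.

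The argument involves no genuine obstacle; the only points requiring care are the routine bookkeeping verifications that the join is associative, that $Y\ast 2^{\{u\}}$ is indeed a cone over $u$ (every maximal face contains $u$), and that its deletion of $u$ recovers $Y$ exactly. Once these identities are recorded, the result follows immediately from Lemma \ref{lemma:conepoint} by induction on $|U|$.
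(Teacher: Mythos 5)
Your proof is correct and takes essentially the same approach as the paper's: induction on $|U|$, peeling off one vertex $u\in U$ at a time and applying Lemma \ref{lemma:conepoint} to the cone over $u$, with $(X\ast 2^{U})\setminus u = X\ast 2^{U\setminus\{u\}}$. The only difference is that you spell out the routine verifications (join associativity, every maximal face containing $u$, and deletion recovering the smaller join) that the paper leaves implicit.
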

\begin{proof}
	We argue by induction on $|U|$. If $|U|=0$ then $X\ast 2^U=X$, so the claim holds. Now, assume $|U|>1$. Let $u\in U$ and $U'=U\setminus \{u\}$. Note that $X \ast 2^{U}$ is a cone over $u$; therefore, by Lemma \ref{lemma:conepoint},
	\[
		C(X\ast 2^U)= C((X\ast 2^U) \setminus u)= C(X\ast 2^{U'}).
	\]
	 By the induction hypothesis, $C(X\ast 2^{U'})=C(X)$. Hence,
$
	C(X\ast 2^U)= C(X),
$
as wanted.
\end{proof}

\begin{lemma}
	\label{lemma:induction_step}
	Let $X$ be a simplicial complex, and let $\sigma=\{v_1,\ldots,v_k\}\in X$.
	For every $0\leq i\leq k$, define $\sigma_i=\{v_j :\, 1\leq j\leq i\}$. Let $d\geq k$.
	If for all $0\leq i\leq k-1$,
	\[
	C(\lk(X\setminus v_{i+1},\sigma_i))\leq d-i,
	\]
	and
	\[
	C(\lk(X,\sigma_k))\leq d-k,
	\]
	then $C(X)\leq d$.
\end{lemma}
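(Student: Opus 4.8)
The plan is to prove, by downward induction on $i$, the family of estimates
\[
C(\lk(X,\sigma_i))\le d-i \qquad (0\le i\le k),
\]
and then to read off the conclusion from the case $i=0$: since $\sigma_0=\emptyset$ we have $\lk(X,\sigma_0)=X$, so that case states exactly $C(X)\le d$. The base case $i=k$ is precisely the second hypothesis, $C(\lk(X,\sigma_k))\le d-k$, so nothing needs to be done there.

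For the inductive step I would assume the bound for $i+1$, that is $C(\lk(X,\sigma_{i+1}))\le d-(i+1)$, and work with the complex $Y_i:=\lk(X,\sigma_i)$. The key move is to apply Tancer's Lemma~\ref{lemma:tancer} to $Y_i$ at the vertex $v_{i+1}$, which indeed lies in $Y_i$ because $\sigma_{i+1}=\sigma_i\cup\{v_{i+1}\}\subseteq\sigma\in X$. This gives
\[
C(Y_i)\le \max\{\,C(Y_i\setminus v_{i+1}),\ C(\lk(Y_i,v_{i+1}))+1\,\}.
\]

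To convert this into the desired recursion I would verify two set-theoretic identities. First, the link-of-a-link identity $\lk(Y_i,v_{i+1})=\lk(X,\sigma_i\cup\{v_{i+1}\})=\lk(X,\sigma_{i+1})$, which lets me apply the induction hypothesis to the second term. Second, the commutation $Y_i\setminus v_{i+1}=\lk(X,\sigma_i)\setminus v_{i+1}=\lk(X\setminus v_{i+1},\sigma_i)$; here one checks straight from the definitions that both complexes consist of exactly those $\tau$ with $v_{i+1}\notin\tau$, $\tau\cap\sigma_i=\emptyset$, and $\tau\cup\sigma_i\in X$, using that $v_{i+1}\notin\sigma_i$. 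The first hypothesis then bounds this term by $d-i$. Substituting both facts into the displayed inequality yields
\[
C(\lk(X,\sigma_i))\le \max\{\,d-i,\ (d-(i+1))+1\,\}=d-i,
\]
which completes the induction.

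I expect the only genuine care to lie in the two identities, especially the deletion--link commutation, where the disjointness $v_{i+1}\notin\sigma_i$ (guaranteed because the $v_j$ are distinct) is essential; everything else is a routine bookkeeping of Tancer's inequality applied $k$ times in succession. The hypothesis $d\ge k$ plays no active role in the manipulations: it merely ensures that every intermediate bound $d-i$ stays non-negative, as it must for a collapsibility number.
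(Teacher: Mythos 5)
Your proof is correct and follows essentially the same route as the paper: a backwards induction on $i$ establishing $C(\lk(X,\sigma_i))\le d-i$, with Tancer's Lemma~\ref{lemma:tancer} applied at $v_{i+1}$ in each step. The only difference is cosmetic---you explicitly verify the identities $\lk(\lk(X,\sigma_i),v_{i+1})=\lk(X,\sigma_{i+1})$ and $\lk(X,\sigma_i)\setminus v_{i+1}=\lk(X\setminus v_{i+1},\sigma_i)$, which the paper uses implicitly---and your verification of them is correct.
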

\begin{proof}

We will show that, for any $i\in\{0,\ldots,k\}$,
\[
    C(\lk(X,\sigma_i))\leq d-i.
\]
We argue by backwards induction on $i$. For $i=k$, $C(\lk(X,\sigma_k))\leq d-k$ by assumption. Let $i<k$. By Lemma \ref{lemma:tancer}, we have
\[
    C(\lk(X,\sigma_i))\leq \max \{ C(\lk(X\setminus v_{i+1},\sigma_i)), C(\lk(X,\sigma_{i+1}))+1\}.
\]
But $C(\lk(X\setminus v_{i+1},\sigma_i))\leq d-i$ by assumption, and $C(\lk(X,\sigma_{i+1}))\leq d-i-1$ by the induction hypothesis. Therefore,
\[
    C(\lk(X,\sigma_i))\leq d-i.
\]

Setting $i=0$, we obtain (since $\sigma_0=\emptyset$),
\[
    C(X)=C(\lk(X,\sigma_0))\leq d-0=d.
\]
\end{proof}

As a consequence of Lemma~\ref{lemma:induction_step}, we obtain:

\begin{proposition}
\label{lemma:vertexset_bound}
Let $X$ be a simplicial complex on vertex set $V$. If all the missing faces of $X$ are of dimension at most $d$, then
\[
    C(X)\leq \left\lfloor\frac{d  |V|}{d+1}\right\rfloor.
\]
Moreover, equality $C(X)=\frac{d|V|}{d+1}$ is obtained if and only if $X$ is the join of $r=\frac{|V|}{d+1}$ disjoint copies of the boundary of a $d$-dimensional simplex 
(or equivalently, if the set of missing faces of $X$ consists of $r$ disjoint sets of size $d+1$). 
\end{proposition}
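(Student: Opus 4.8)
The plan is to prove the inequality by induction on $|V|$ using Lemma~\ref{lemma:induction_step}, and then to analyze exactly when the induction is tight in order to extract the equality characterization. Throughout write $D=\lfloor d|V|/(d+1)\rfloor$. A preliminary observation is that the class of complexes whose missing faces all have dimension $\le d$ is closed under the operations appearing in our tools: the missing faces of an induced subcomplex $X[U]$ are precisely the missing faces of $X$ contained in $U$, and if $v$ is a vertex then every missing face $\eta$ of $\lk(X,v)$ again has dimension $\le d$, since $\eta\cup\{v\}\notin X$ contains a missing face $\mu$ of $X$ and minimality of $\eta$ forces $\eta=\mu$ or $\eta=\mu\setminus\{v\}$; iterating over the vertices of a face $\tau$ shows $\lk(X,\tau)$ also lies in the class.

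For the bound itself, if $X=2^V$ then $C(X)=0\le D$. Otherwise fix a missing face $\mu=\{v_1,\dots,v_{k+1}\}$ (so $k+1\le d+1$) and put $\sigma=\{v_1,\dots,v_k\}\in X$. I would apply Lemma~\ref{lemma:induction_step} to $\sigma$ with target $D$ (noting $D\ge k$, which follows from $|V|\ge k+1$). The complexes $\lk(X\setminus v_{i+1},\sigma_i)$ are supported on at most $|V|-1-i$ vertices, and $\lk(X,\sigma)$ on at most $|V|-k-1$ vertices, because $v_{k+1}\notin\lk(X,\sigma)$ as $\sigma\cup\{v_{k+1}\}=\mu\notin X$. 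By the preliminary observation and the induction hypothesis applied to these strictly smaller complexes, together with the elementary estimate $\tfrac{dj}{d+1}\ge j-1$ for $1\le j\le d+1$ (which yields $\lfloor \tfrac{d(|V|-1-i)}{d+1}\rfloor\le D-i$ and $\lfloor\tfrac{d(|V|-k-1)}{d+1}\rfloor\le D-k$), all hypotheses of Lemma~\ref{lemma:induction_step} are met, so $C(X)\le D$.

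For the equality statement, the ``if'' direction is short: when $X=(\partial\Delta^d)^{\ast r}$ with $|V|=r(d+1)$, the bound gives $C(X)\le rd=\tfrac{d|V|}{d+1}$, while Theorem~\ref{thm:kunneth} together with $\tilde H_{d-1}(\partial\Delta^d)\ne 0$ produces $\tilde H_{rd-1}(X)\ne 0$, whence $L(X)\ge rd$ and $C(X)\ge L(X)=rd$ by Lemma~\ref{lem:coll_leray}. The ``only if'' direction is where the real content lies. Assuming $C(X)=D=\tfrac{d|V|}{d+1}$ forces $(d+1)\mid|V|$, say $|V|=r(d+1)$, and I would prove by induction that any non-extremal $X$ satisfies $C(X)\le D-1$. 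The engine is the observation that the deletion--link terms in the bound carry a full unit of slack: since $d(i+1)/(d+1)$ is never an integer for $0\le i\le d-1$, one has $\lfloor\tfrac{d(|V|-1-i)}{d+1}\rfloor=D-1-i$ exactly, so a single extra unit of saving in the final term $C(\lk(X,\sigma))$ upgrades Lemma~\ref{lemma:induction_step} (now with target $D-1$) to $C(X)\le D-1$. Two sources of such saving are immediate. First, if some vertex $v$ lies in no missing face then $X$ is a cone over $v$, and Lemma~\ref{lemma:conepoint} gives $C(X)=C(X\setminus v)\le\lfloor\tfrac{d(|V|-1)}{d+1}\rfloor=D-1$. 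Second, if some missing face has size $k+1\le d$, then the final term already obeys $C(\lk(X,\sigma))\le\lfloor\tfrac{d(|V|-k-1)}{d+1}\rfloor=D-k-1$, again forcing $C(X)\le D-1$. Consequently, in the equality case every vertex lies in a missing face and all missing faces have size exactly $d+1$, so it only remains to rule out overlaps.

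The main obstacle is precisely this last case: two distinct missing faces $\mu_1\ne\mu_2$ with $\mu_1\cap\mu_2\ne\emptyset$. Here I would choose a $d$-subset $\sigma\subset\mu_1$ and argue that $\lk(X,\sigma)$, a complex on $(r-1)(d+1)$ vertices in our class, is either supported on strictly fewer vertices (which gives $C(\lk(X,\sigma))\le(r-1)d-1=D-d-1$, hence $C(X)\le D-1$ as above) or is itself non-extremal, so that the induction hypothesis --- the equality characterization for smaller complexes --- yields the same saving. For example, when $|\mu_1\cap\mu_2|=d$, taking $\sigma=\mu_1\cap\mu_2$ removes both of the remaining vertices of $\mu_1$ and $\mu_2$ from the link, realizing the first alternative outright. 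The genuinely delicate point, where I expect the bulk of the combinatorial work, is selecting $\sigma$ for a general overlap and checking that extremality of $\lk(X,\sigma)$ (its missing faces partitioning its vertex set into disjoint $(d+1)$-sets) is incompatible with $\mu_1$ and $\mu_2$ overlapping, thereby forcing the disjointness we seek and completing the characterization.
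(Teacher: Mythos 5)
Your proof of the inequality is essentially the paper's proof: the same choice of a missing face $\mu$, the same application of Lemma~\ref{lemma:induction_step} to $\sigma=\mu\setminus\{v_{k+1}\}$, and the same arithmetic $\frac{d(i+1)}{d+1}\geq i$ for $i\leq d$; your preliminary observation that the class of complexes with missing faces of dimension at most $d$ is closed under induced subcomplexes and links is used implicitly but never stated in the paper, so spelling it out is a point in your favor, as is the explicit check that the floors behave. The ``if'' direction of the equality (K\"unneth via Theorem~\ref{thm:kunneth} plus $C\geq L$ from Lemma~\ref{lem:coll_leray}) is correct; the paper leaves it implicit. The genuine problem is in the ``only if'' direction. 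Your two reductions are sound: an uncovered vertex gives a cone and Lemma~\ref{lemma:conepoint} yields $C(X)\leq D-1$, and a missing face of size at most $d$ gives the extra unit in the final term of Lemma~\ref{lemma:induction_step}. But the central remaining step --- that two \emph{overlapping} $(d+1)$-element missing faces $\mu_1,\mu_2$ also force $C(X)\leq D-1$ --- is not proved: you settle only $|\mu_1\cap\mu_2|=d$ and explicitly defer the general overlap, which is precisely where the content lies. (The paper handles this point by a different device: from the forced equality at $i=k=d$ it reads off, via the induction hypothesis applied to $\lk(X,\sigma_d)$, that the missing faces of $X$ are $r$ disjoint $(d+1)$-sets plus possibly extra faces through $v_{d+1}$, and eliminates the extras by rerunning the whole argument with the roles of $v_d$ and $v_{d+1}$ exchanged.)

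The gap is real but small, and it closes inside your own framework using the preliminary observation you already proved. Given distinct missing faces $\mu_1,\mu_2$ of size $d+1$ with $\mu_1\cap\mu_2\neq\emptyset$, choose $\sigma$ of size $d$ with $\mu_1\cap\mu_2\subset\sigma\subset\mu_1$, and let $v$ be the unique vertex of $\mu_1\setminus\sigma$; note $v\notin\mu_2$ since $\mu_1\cap\mu_2\subset\sigma$. The set $\eta=\mu_2\setminus\sigma$ has size $d+1-|\mu_1\cap\mu_2|\leq d$ and satisfies $\sigma\cup\eta\supset\mu_2\notin X$, so $\eta\notin\lk(X,\sigma)$. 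The link lives on at most $|V|-d-1=(r-1)(d+1)$ vertices (as $v$ is excluded). If it has strictly fewer, the general bound gives
\[
C(\lk(X,\sigma))\leq\left\lfloor\frac{d(|V|-d-2)}{d+1}\right\rfloor=D-d-1,
\]
using $(d+1)\mid |V|$. If it has exactly $(r-1)(d+1)$ vertices, then its vertex set is $V\setminus\mu_1\supset\eta$, so $\eta$ contains a missing face of $\lk(X,\sigma)$ of size at most $d$; hence the link is non-extremal, and the induction hypothesis (the equality characterization on fewer vertices) gives $C(\lk(X,\sigma))\leq d(r-1)-1=D-d-1$. Either way your ``one extra unit of saving'' mechanism applies and yields $C(X)\leq D-1$, completing the disjointness claim. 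With this insertion your route through the equality case is correct and genuinely different from the paper's: you trade the paper's reordering trick for a contradiction argument in which each structural deviation from the extremal configuration produces a quantified saving, which is arguably more transparent.
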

\begin{proof}
We argue by induction on $|V|$. If $|V|=0$, then $X$ is $0$-collapsible, and the inequality holds. Assume $|V|>0$. If $X$ is a complete complex, then it is $0$-collapsible, and the inequality holds. Otherwise, let $\sigma=\{v_1,\ldots,v_{k+1}\}\subset V$ be a missing face of $X$. Since all the missing faces of $X$ are of dimension at most $d$, we have $k\leq d$. For each $0\leq i\leq k$, let $\sigma_i=\{v_j:\, 1\leq j\leq i\}\in X$. Let $V_i$ be the vertex set of $\lk(X\setminus v_{i+1},\sigma_i)$.
Note that for every $0\leq i\leq k$, 
\[
V_i\subset V\setminus\sigma_{i+1}.
\]
Therefore, by the induction hypothesis,
\[
C(\lk(X\setminus v_{i+1},\sigma_i))\leq \frac{d}{d+1}|V_i|\leq \frac{d}{d+1}(|V|-i-1).
\]
Since $i\leq k\leq d$, we obtain
\[
C(\lk(X\setminus v_{i+1},\sigma_i))\leq \frac{d}{d+1}|V|-\frac{i}{i+1}(i+1)
=\frac{d}{d+1}|V|-i.
\]
Also, since $\sigma$ is a missing face, we have
\[
    \lk(X,\sigma_k)= \lk(X\setminus v_{k+1}, \sigma_k),
\]
and in particular $C(\lk(X,\sigma_k))\leq \frac{d}{d+1}|V|-k$. Therefore, by Lemma \ref{lemma:induction_step}, we obtain
\[
    C(X)\leq \frac{d}{d+1}|V|.
\]
Since $C(X)$ is an integer, we obtain $C(X)\leq \left\lfloor\frac{d  |V|}{d+1}\right\rfloor$.

Now, assume $C(X)=\frac{d}{d+1}|V|$. Note that, since $C(X)$ is an integer and $\text{gcd}(d,d+1)=1$, then $d+1$ must divide $|V|$.

Then, there exists some $0\leq i\leq k$ such that
\[
C(\lk(X\setminus v_{i+1},\sigma_i))=\frac{d}{d+1}(|V|-i-1)
\]
(otherwise, by the same argument as above, we could prove that $C(X)<\frac{d}{d+1}|V|$).
Since $d+1$ divides $|V|$, it must also divide $i+1$. Hence, we must have $i=k=d$. By the induction hypothesis, the missing faces of 
\[
    \lk(X,\sigma_d)=\lk(X\setminus v_{d+1}, \sigma_d)
\]
form a set of $r-1$ disjoint sets of size $d+1$. Therefore, the set of missing faces of $X$ consists of $r$ disjoint sets of size $d+1$ plus, possibly, some additional faces of the form $\tau\cup\{v_{d+1}\}$, where $\tau\in V\setminus \sigma$. But the choice of the order $v_1,\ldots,v_{d+1}$ on the vertices of $\sigma$ was arbitrary. Thus, repeating the same argument with a different order (e.g. $v_i'=v_i$ for $i\leq d-1$, $v_d'=v_{d+1}$, $v_{d+1}'=v_{d}$), we obtain that the set of missing faces of $X$ consists exactly of $r$ disjoint sets of size $d+1$.
\end{proof}

\begin{remark}
An analogous bound in terms of Leray numbers was proved in \cite[Prop. 5.4]{adamaszek2014extremal}.
\end{remark}

\begin{lemma}\label{lemma:induction_step2}
Let $X$ be a complex on vertex set $V$, and let $B\subset V$. Let $<$ be a linear order on the vertices of $B$.
Let $\mathcal{P}=\mathcal{P}(X,B)$ be the family of partitions $(B_1,B_2)$ of $B$ satisfying:
\begin{itemize}
\item $B_2\in X$.
\item For any $v\in B_2$, the complex
\[
    \lk(X[V\setminus\{u\in B_1:\,  u<v\}],\{u\in B_2:\,  u<v\})
\]
is not a cone over $v$.
\end{itemize}
If for every $(B_1,B_2)\in\mathcal{P}$,
\[
    C(\lk(X[V\setminus B_1],B_2))\leq d-|B_2|,
\]
then $C(X)\leq d$.
\end{lemma}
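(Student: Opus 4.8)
The plan is to induct on $|B|$, peeling off the $<$-smallest vertex $v\in B$ and applying Tancer's two lemmas at each step. Write $B'=B\setminus\{v\}$ and $V'=V\setminus\{v\}$. The guiding idea is that the recursion should split $\mathcal{P}(X,B)$ according to whether $v\in B_1$ or $v\in B_2$, and that these two parts ought to correspond, respectively, to the partition families $\mathcal{P}(X\setminus v,B')$ and $\mathcal{P}(\lk(X,v),B')$ attached to the two complexes produced by one application of Lemma \ref{lemma:tancer}. Putting $v\in B_2$ costs a ``$+1$'', which is exactly why the target drops from $d$ to $d-1$ on the link side, matching the $+1$ in $|B_2|$.

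Before the induction I would record the elementary identities that drive the bookkeeping: for $W\subseteq V'$ and $\tau\subseteq V'$ one has $\lk(X,v)[W]=\lk(X[W\cup\{v\}],v)$ and $\lk(\lk(X,v),\tau)=\lk(X,\tau\cup\{v\})$, and deletions, links, and induced subcomplexes on disjoint vertices commute. These let me rewrite $\lk\big(\lk(X,v)[V'\setminus C_1],\,C_2\big)$ as $\lk\big(X[V\setminus C_1],\,C_2\cup\{v\}\big)$, which is precisely the link complex indexed by the $B$-partition $(C_1,C_2\cup\{v\})$. Using them I would verify the two correspondences exactly: a partition $(B_1,B_2)$ of $B$ with $v\in B_1$ lies in $\mathcal{P}(X,B)$ iff $(B_1\setminus\{v\},B_2)$ lies in $\mathcal{P}(X\setminus v,B')$; and (when $v$ is a vertex of $X$ and $X$ is not a cone over $v$) a partition with $v\in B_2$ lies in $\mathcal{P}(X,B)$ iff $(B_1,B_2\setminus\{v\})$ lies in $\mathcal{P}(\lk(X,v),B')$. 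The reason both hold is that, since $v$ is $<$-smallest, the cone test for each $w\in B_2$ with $w\neq v$ refers to the identical complex $\lk(X[V\setminus\{u\in B_1:u<w\}],\{u\in B_2:u<w\})$ in either framing, and the membership clauses $B_2\in X$ and $B_2\in X\setminus v$ (resp.\ $B_2\setminus\{v\}\in\lk(X,v)$) agree since $v\notin B_2$ (resp.\ since $C_2\cup\{v\}\in X$).

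For the inductive step I would split into two cases. If $v$ is not a vertex of $X$ or $X$ is a cone over $v$, then no member of $\mathcal{P}(X,B)$ can have $v\in B_2$: the requirement $B_2\in X$ (forcing $\{v\}\in X$) excludes the non-vertex case, while the cone clause applied to $w=v$ — for which the tested complex is $\lk(X,\emptyset)=X$ — excludes the cone case. Hence $\mathcal{P}(X,B)$ maps bijectively onto $\mathcal{P}(X\setminus v,B')$, the hypothesis transfers verbatim, and the induction hypothesis yields $C(X\setminus v)\leq d$; since $C(X)\leq C(X\setminus v)$ (by Lemma \ref{lemma:conepoint} in the cone case, and because $X\setminus v=X$ in the non-vertex case), we conclude. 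Otherwise $v$ is a vertex and $X$ is not a cone over $v$, so Lemma \ref{lemma:tancer} gives $C(X)\leq\max\{\,C(X\setminus v),\,C(\lk(X,v))+1\,\}$. The $v\in B_1$ part of the hypothesis feeds the induction hypothesis for $X\setminus v$ at level $d$, while the $v\in B_2$ part — where $|B_2|=|B_2\setminus\{v\}|+1$, so the bound reads $C(\lk(X[V\setminus B_1],B_2))\leq d-1-|B_2\setminus\{v\}|$ — feeds the induction hypothesis for $\lk(X,v)$ at level $d-1$. Both terms of the maximum are then at most $d$, giving $C(X)\leq d$, and the base case $|B|=0$ (where $\mathcal{P}=\{(\emptyset,\emptyset)\}$ and the hypothesis reads $C(X)\leq d$) is immediate.

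I expect the only real obstacle to be the bookkeeping of the second paragraph: confirming that both defining conditions of $\mathcal{P}$ are preserved under passage to $X\setminus v$ and to $\lk(X,v)$, and in particular that the cone test at the $<$-smallest vertex $v$ is literally the dichotomy ``$X$ is / is not a cone over $v$'' (together with the vertex/non-vertex distinction) that triggers the case split. Once these identifications are pinned down, the collapsibility estimate is a one-line combination of Lemmas \ref{lemma:tancer} and \ref{lemma:conepoint}.
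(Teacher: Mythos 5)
Your proof is correct and follows essentially the same route as the paper's: induction on $|B|$ peeling off the $<$-minimal vertex $v$, transferring the partition families to $X\setminus v$ (when $v\in B_1$) and to $\lk(X,v)$ (when $v\in B_2$, forcing $X$ not a cone over $v$ via the cone test at $v$, which is exactly $\lk(X,\emptyset)=X$), and combining with Lemmas \ref{lemma:tancer} and \ref{lemma:conepoint}. The only cosmetic difference is that the paper restricts to $B''=B\cap V''$ where $V''$ is the vertex set of $\lk(X,v)$, whereas you keep $B'=B\setminus\{v\}$, which is why you need the extra ``$v$ not a vertex'' case; since you handle that case explicitly (noting $B_2\in X$ forces $B_2$ to consist of vertices), your bookkeeping variant is equally valid.
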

\begin{proof}
We argue by induction on $|B|$. If $|B|=0$ there is nothing to prove. So, assume $|B|>0$, and let $v$ be the minimal vertex in $B$ (with respect to the order $<$). 
Let $X'=X\setminus v$, and let $V'=V\setminus\{v\}$ be its vertex set.
Let $B'=B\setminus\{v\}$, and let $(B_1',B_2')\in \mathcal{P}(X', B')$. Define $B_1=B_1'\cup\{v\}$ and $B_2=B_2'$. Then, $B_2\in X\setminus v \subset X$, and for any $u\in B_2$, the complex
\begin{multline*}
    \lk(X[V\setminus \{w\in B_1: \, w<u\}],\{w\in B_2: \, w<u\})
    \\
    =\lk(X'[V'\setminus \{w\in B_1': \, w<u\}],\{w\in B_2': \, w<u\})
\end{multline*}
is not a cone over $u$ (since $(B_1',B_2')\in \mathcal{P}(X',B')$). Therefore $(B_1,B_2)\in \mathcal{P}(X,B)$. So,
\[
   C(\lk(X'[V'\setminus B'_1],B'_2))
   =C(\lk(X[V\setminus B_1],B_2))
   \leq d-|B_2|=d-|B_2'|.
\]
Hence, by the induction hypothesis, $C(X\setminus v)=C(X')\leq d$.

If $X$ is a cone over $v$ then, by Lemma \ref{lemma:conepoint}, $C(X)=C(X\setminus v)\leq d$, as wanted. Otherwise, let $X''=\lk(X,v)$, and let $V''\subset V\setminus \{v\}$ be its vertex set. Let $B''=B\cap V''$, and let $(B_1'',B_2'')\in \mathcal{P}(X'',B'')$. Let $B_2=B_2''\cup\{v\}$ and $B_1=B\setminus B_2$.

Since $B_2''\in X''=\lk(X,v)$, we have  $B_2= B_2''\cup\{v\}\in X$. Let $u\in B_2$. If $u=v$, then
\[
    \lk(X[V\setminus\{w\in B_1:\, w<u\}],\{w\in B_2:\, w<u\})= X
\]
is not a cone over $u=v$. If $u>v$, then
\begin{multline*}
    \lk(X[V\setminus \{w\in B_1: \, w<u\}],\{w\in B_2: \, w<u\})
    \\
    =\lk(X''[V''\setminus \{w\in B_1'': \, w<u\}],\{w\in B_2'': \, w<u\})
\end{multline*}
is not a cone over $u$ (since $(B_1'',B_2'')\in \mathcal{P}(X'',B'')$). Therefore, $(B_1,B_2)\in \mathcal{P}(X,B)$. So, 
\[
   C(\lk(X''[V''\setminus B''_1],B''_2))
   =C(\lk(X[V\setminus B_1],B_2))
   \leq d-|B_2|=(d-1)-|B_2''|.
\]
Thus, by the induction hypothesis, $C(\lk(X,v))=C(X'')\leq d-1$. Hence, by Lemma \ref{lemma:tancer}, $C(X)\leq d$.
\end{proof}

The following bound is proved in \cite{khmel}. For completeness, we include here the proof.

\begin{proposition}[Khmelnitsky \cite{khmel}]\label{lem:link_coll}
	Let $X$ be a simplicial complex on vertex set $V$, and let $\sigma\in X$.
	Then,
	\[
		C(\lk(X,\sigma))\leq C(X).
	\]
\end{proposition}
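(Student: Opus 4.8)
The plan is to reduce to a single vertex and then push a collapsing sequence of $X$ through the link operation. First I would record the standard identity $\lk(X,\sigma)=\lk(\lk(X,v),\sigma\setminus\{v\})$ for any $v\in\sigma$ (noting that $\sigma\setminus\{v\}\in\lk(X,v)$ since $\sigma\in X$). Iterating this identity expresses $\lk(X,\sigma)$ as an iterated single-vertex link of complexes of the form $\lk(X,\sigma')$; hence it suffices to prove the single-vertex statement $C(\lk(X,v))\le C(X)$, and the general bound follows by applying it $|\sigma|$ times.

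For the single-vertex case, set $d=C(X)$ and fix a sequence of elementary $d$-collapses $X=X_1\xrightarrow{\sigma_1}X_2\xrightarrow{\sigma_2}\cdots\xrightarrow{\sigma_{k-1}}X_k=\emptyset$. Writing $L_i=\lk(X_i,v)$, I would show that each step $X_i\xrightarrow{\sigma_i}X_{i+1}$ induces either an elementary $d$-collapse $L_i\to L_{i+1}$ or the equality $L_i=L_{i+1}$. Concatenating the nontrivial steps then exhibits a sequence of elementary $d$-collapses from $L_1=\lk(X,v)$ down to $L_k=\lk(\emptyset,v)=\emptyset$, which yields $C(\lk(X,v))\le d$. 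The key structural fact is the bijection $\eta\mapsto\eta\cup\{v\}$ between faces of $L_i$ and faces of $X_i$ containing $v$, under which $L_{i+1}=L_i\setminus\{\eta:\,v\notin\eta,\ \sigma_i\subseteq\eta\cup\{v\}\subseteq\tau_i\}$, where $\tau_i$ denotes the unique maximal face of $X_i$ containing the free face $\sigma_i$.

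The case analysis proceeds on the position of $v$. If $v\notin\tau_i$, then no removed face of $X_i$ contains $v$, so $L_i=L_{i+1}$ and there is nothing to do. If $v\in\tau_i$, I would check that the faces removed from $L_i$ are exactly those $\eta$ with $\sigma_i\setminus\{v\}\subseteq\eta\subseteq\tau_i\setminus\{v\}$, so that the candidate elementary collapse is $L_i\xrightarrow{\sigma_i\setminus\{v\}}L_{i+1}$; since $|\sigma_i\setminus\{v\}|\le|\sigma_i|\le d$, this has the correct dimension. The main obstacle, and the heart of the argument, is to verify that $\sigma_i\setminus\{v\}$ is genuinely a free face of $L_i$ whose unique maximal face is $\tau_i\setminus\{v\}$. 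For this I would first observe that $\tau_i\setminus\{v\}\in L_i$ and that it is maximal in $L_i$: a strictly larger $\mu\in L_i$ would give $\tau_i\subsetneq\mu\cup\{v\}\in X_i$, contradicting the maximality of $\tau_i$. Then, for any maximal $\mu\in L_i$ with $\sigma_i\setminus\{v\}\subseteq\mu$, the inclusions $\sigma_i\subseteq(\sigma_i\setminus\{v\})\cup\{v\}\subseteq\mu\cup\{v\}\in X_i$ together with the freeness of $\sigma_i$ in $X_i$ force $\mu\cup\{v\}\subseteq\tau_i$, hence $\mu\subseteq\tau_i\setminus\{v\}$ and therefore $\mu=\tau_i\setminus\{v\}$. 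This establishes the required freeness uniformly (covering both $v\in\sigma_i$ and $v\notin\sigma_i$), completing the induced collapse and hence the proof.
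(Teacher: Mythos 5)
Your proof is correct, and although it runs on the same engine as the paper's argument---transporting a fixed sequence of elementary $d$-collapses of $X$ into the link, one step at a time, with each step becoming either an equality or an induced elementary $d$-collapse---the mechanics are genuinely different. The paper keeps $\sigma$ general and inducts on the length of the collapsing sequence, splitting on whether the free face $\eta$ satisfies $\eta\subseteq\sigma$ (link is a complete complex), $\eta\notin\lk(X,\sigma)$ (link claimed unchanged), or $\eta\in\lk(X,\sigma)$ ($\eta$ itself free in the link); you instead reduce to a single vertex via the identity $\lk(X,\sigma)=\lk(\lk(X,v),\sigma\setminus\{v\})$ and split only on $v\in\tau_i$ versus $v\notin\tau_i$, taking $\sigma_i\setminus\{v\}$ as the induced free face. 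Your choice buys two things. First, it handles uniformly what the paper treats as a special case: when $\sigma_i=\{v\}$ your free face is $\emptyset$, and your maximality argument shows $L_i$ has $\tau_i\setminus\{v\}$ as its unique maximal face, so the induced collapse removes the whole simplex at once. Second, and more substantively, your case split is the robust one at exactly the point where the paper's dichotomy is too coarse: if the free face meets $\sigma$ without being contained in it, say $\eta\cap\sigma\neq\emptyset$, $\eta\not\subseteq\sigma$, but $\eta\cup\sigma\in X$, then $\eta\notin\lk(X,\sigma)$ and yet the link \emph{does} change (take $X=2^{\{a,b\}}$, $\sigma=\{a\}$, $\eta=\tau=\{a,b\}$: then $\lk(X,\sigma)=\{\emptyset,\{b\}\}$ while $\lk(X',\sigma)=\{\emptyset\}$), so the paper's first case does not literally apply there; the correct induced free face is $\eta\setminus\sigma$, which is precisely the quantity your argument collapses on (in single-vertex form, $\sigma_i\setminus\{v\}$). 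Your verifications---that the faces removed from $L_i$ are exactly those between $\sigma_i\setminus\{v\}$ and $\tau_i\setminus\{v\}$, and that freeness of $\sigma_i$ in $X_i$ forces $\mu\cup\{v\}\subseteq\tau_i$ for any maximal $\mu\in L_i$ containing $\sigma_i\setminus\{v\}$---are exactly what is needed and are airtight, including the bookkeeping for the trivial steps $v\notin\tau_i$ and for stages where $v$ has already been removed, in which case $L_i$ is void and the step is vacuous. The only cost of your route is the (standard, correctly justified) iterated-link reduction to a single vertex, which the paper avoids by working with general $\sigma$ directly.
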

\begin{proof}
Let $d\geq 0$, and assume that $X$ can be reduced to the void complex by a sequence of $k$ elementary $d$-collapses. We will show that $\lk(X,\sigma)$ is $d$-collapsible. We argue by induction on $k$. If $k=1$, we must have $d=0$ and $X=2^V$; so, $C(\lk(X,\sigma))=C(2^{V\setminus \sigma})=0= d$, and the claim holds.
Assume $k>1$. Then, there exists a free face $\eta\in X$ with $|\eta|\leq d$, such that the complex
\[
	X'=X\setminus\{\xi\in X:\, \eta\subset \xi\}
\]
can be reduced to the void complex by a sequence of $k-1$ elementary $d$-collapses.

Let $\tau$ be the unique maximal face of $X$ containing $\eta$.

Assume that $\eta\subset\sigma$. Let $\xi\in \lk(X,\sigma)$. Then $\eta\subset\sigma\cup\xi\in X$. Therefore, since $\eta$ is contained in the unique maximal face $\tau\in X$, we have $\sigma\cup \xi\subset \tau$. So, $\xi\subset \tau\setminus \sigma$. Since $\tau\setminus \sigma\in \lk(X,\sigma)$, we obtain $\lk(X,\sigma)=2^{\tau\setminus\sigma}$. In particular, $C(\lk(X,\sigma))=0\leq d$.

Otherwise, assume $\eta\not\subset\sigma$. We divide into two cases:
\begin{enumerate}
	\item If $\eta\notin \lk(X,\sigma)$, then
	\[
		\lk(X',\sigma)=\{\xi\in \lk(X,\sigma):\, \eta\not\subset\xi \}	
	=	\lk(X,\sigma).
	\]
	Thus, by the induction hypothesis, $\lk(X,\sigma)$ is $d$-collapsible.
	
	\item If $\eta\in \lk(X,\sigma)$, let $\tau'=\tau\setminus \sigma$. Let $\eta\subset\xi\in \lk(X,\sigma)$. Then $\eta\subset \sigma\cup\xi \in X$; therefore, since $\eta$ is contained in the unique maximal face $\tau$ of $X$, we obtain $\sigma\cup\xi\subset \tau$. That is, $\xi\subset \tau'$. Hence, since $\tau'\in \lk(X,\sigma)$, $\eta$ is a free face in $\lk(X,\sigma)$. So, we can perform the elementary $d$-collapse
	\[
		\lk(X,\sigma)\xrightarrow{\eta} \lk(X,\sigma)\setminus\{\xi\in \lk(X,\sigma):\, \eta\subset\xi\}= \lk(X',\sigma).
	\]
	By the induction hypothesis, $\lk(X',\sigma)$ is $d$-collapsible. Thus, $\lk(X,\sigma)$ is $d$-collapsible.	
\end{enumerate}
\end{proof}

Lastly, we will need the following simple bound:
\begin{lemma}\label{lemma:bdddim}
	Let $X$ be a simplicial complex. Then,
	\[
		C(X)\leq \dim(X)+1.
	\]
\end{lemma}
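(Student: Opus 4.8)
The plan is to prove directly that $X$ is $(\dim(X)+1)$-collapsible by peeling off maximal faces one at a time. Set $d=\dim(X)$. The crucial observation is that every maximal face of $X$ is automatically a free face: a maximal face $\tau$ is trivially contained in a unique maximal face of $X$, namely $\tau$ itself. Taking $\sigma=\tau$ in the definition of an elementary collapse, the set $\{\eta\in X:\,\tau\subset\eta\subset\tau\}$ equals $\{\tau\}$, so the collapse $X\xrightarrow{\tau} X\setminus\{\tau\}$ simply deletes the single face $\tau$. Moreover $|\tau|\le d+1$ because $\tau$ has dimension at most $d$, so this is a legitimate elementary $(d+1)$-collapse.

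First I would record two routine facts: deleting a maximal face from a simplicial complex again yields a simplicial complex (downward closure is preserved, since we only discard a maximal element), and this operation never increases the dimension. Hence at every stage the current complex $Y$ is a subcomplex of $X$ with $\dim(Y)\le d$, so every maximal face of $Y$ still has size at most $d+1$ and can be removed by the elementary $(d+1)$-collapse above. I would then repeat this step, each time deleting one maximal face; since each application strictly decreases the (finite) number of faces, the process terminates.

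Next I would pin down the terminal cases to confirm we actually reach the void complex $\emptyset$. If $X$ is already void there is nothing to prove. Otherwise $X$ contains the empty face, and the peeling process eventually reduces the current complex to $\{\emptyset\}$; one final collapse with $\sigma=\emptyset$ (of size $0\le d+1$) removes $\emptyset$ and produces the void complex. Concatenating all these elementary $(d+1)$-collapses exhibits a collapsing sequence $X=X_1\to\cdots\to X_k=\emptyset$, proving that $X$ is $(d+1)$-collapsible and hence $C(X)\le d+1$.

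There is essentially no hard step here; the only points requiring care are definitional—verifying that a maximal face qualifies as a free face, that the induced collapse removes exactly that face, and that the empty-face/void-complex endgame is handled correctly. As an alternative I could argue by induction on $|V|$ using Tancer's Lemma~\ref{lemma:tancer}: since $\dim(\lk(X,v))\le \dim(X)-1$ and $\dim(X\setminus v)\le\dim(X)$, the inductive hypothesis gives $C(X\setminus v)\le d+1$ and $C(\lk(X,v))+1\le d+1$, whence $C(X)\le d+1$. However, the direct peeling argument is cleaner and self-contained.
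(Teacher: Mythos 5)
Your proposal is correct and is essentially the paper's own argument: the paper also observes that a maximal face $\tau$ is free, removes it by an elementary $(\dim(X)+1)$-collapse, and concludes by induction on the number of faces (your iterative peeling with termination by the decreasing face count is the same induction in different clothing). Your handling of the $\{\emptyset\}\to\emptyset$ endgame matches the paper's base case, so nothing is missing.
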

\begin{proof}
We argue by induction on the number of faces of $X$. If $X$ contains a unique  face (that is, $X=\{\emptyset\}$), then $C(X)=0= \dim(X)+1$.

Now, assume that $X$ contains more that one face. Let $d=\dim(X)+1$.
 Let $\tau$ be a maximal face of $X$. Then, $\tau$ is a free face in $X$ of size $|\tau|\leq d$; so, we can perform the elementary $d$-collapse
 \[
	 X \xrightarrow{\tau} X'=X\setminus \{\tau\}.
\]
By the induction hypothesis, $X'$ is $d$-collapsible. Hence, $X$ is $d$-collapsible. That is,
\[
	C(X)\leq d= \dim(X)+1.
\]
\end{proof}

\bigskip

\section{Chordal graphs}\label{sec:chordal}

In this section we prove Theorem \ref{chordal collapse}, which bounds the collapsibility of $I_n(G)$ in the case that $G$ is a chordal graph. The proof relies on the next result.

\begin{lemma}\label{lemma simplical}
	Let $G=(V,E)$ be a graph, and let $v\in V$ be a simplicial vertex in $G$. Then, for any $n\geq 2$, 
\[
	C(I_n(G))\leq \max \{ C(I_n(G\setminus v)), C(I_{n-1}(G[V\setminus N_G[v]]))+1\}.
\]
\end{lemma}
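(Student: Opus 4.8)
The plan is to apply Tancer's decomposition (Lemma \ref{lemma:tancer}) to the complex $X = I_n(G)$ at the vertex $v$, which gives
\[
C(I_n(G)) \leq \max\{C(I_n(G)\setminus v),\, C(\lk(I_n(G),v)) + 1\}.
\]
Matching this against the claimed bound, it suffices to identify the two pieces: to show that $I_n(G)\setminus v = I_n(G\setminus v)$, and that $C(\lk(I_n(G),v)) \leq C(I_{n-1}(G[V\setminus N_G[v]]))$. The first identity is immediate, since for $U \subset V\setminus\{v\}$ the induced subgraphs $G[U]$ and $(G\setminus v)[U]$ coincide, so the two deletions consist of exactly the same faces.

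The heart of the argument is understanding the link $\lk(I_n(G),v)$, and this is where the simplicial hypothesis enters. Write $A = N_G(v)$ and $W = V\setminus N_G[v]$, so that $V = \{v\}\cupdot A \cupdot W$, where $A$ is a clique (being a subset of the clique $N_G[v]$) and $v$ has no neighbor in $W$. For $\sigma \subset A\cup W$, I would first record that every independent set of $G[\sigma \cup \{v\}]$ containing $v$ is contained in $\{v\}\cup(\sigma\cap W)$, since $v$ is adjacent to every vertex of $A$ but to none of $W$; hence
\[
\alpha(G[\sigma\cup\{v\}]) = \max\{\,1 + \alpha(G[\sigma\cap W]),\ \alpha(G[\sigma])\,\}.
\]
Consequently, $\sigma \in \lk(I_n(G),v)$ if and only if both $\alpha(G[\sigma\cap W]) < n-1$ and $\alpha(G[\sigma]) < n$ hold.

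The key observation — and the step I expect to be the crux — is that the clique structure of $A$ renders the second condition redundant. Since $\sigma\cap A$ is a clique, any independent set of $G[\sigma]$ meets $\sigma\cap A$ in at most one vertex, whence $\alpha(G[\sigma]) \leq 1 + \alpha(G[\sigma\cap W])$. Thus, whenever $\alpha(G[\sigma\cap W]) < n-1$ we automatically obtain $\alpha(G[\sigma]) < n$. This collapses the defining condition of the link to the single requirement $\sigma\cap W \in I_{n-1}(G[W])$, with no constraint at all on $\sigma\cap A$. In other words,
\[
\lk(I_n(G),v) = 2^{A} \ast I_{n-1}(G[W]) = 2^{N_G(v)}\ast I_{n-1}(G[V\setminus N_G[v]]).
\]

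Finally, I would invoke Lemma \ref{lemma:join_with_simplex} with the complete complex $2^{A}$ to conclude that $C(\lk(I_n(G),v)) = C(I_{n-1}(G[V\setminus N_G[v]]))$; substituting the two identified pieces into the Tancer bound then yields the claim. The only points requiring care are the boundary case $n=2$ (where $I_{n-1}(G[W]) = I_1(G[W]) = \{\emptyset\}$ and the link degenerates to $2^{A}$, consistently with the clique-complex picture), and the verification that the decomposition of $\alpha(G[\sigma\cup\{v\}])$ holds in both directions; neither poses a genuine difficulty once the simplicial hypothesis has been used to force $A$ to be a clique.
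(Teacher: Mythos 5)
Your proposal is correct and follows essentially the same route as the paper: both apply Lemma \ref{lemma:tancer} at $v$ and reduce the problem to the identity $\lk(I_n(G),v)=2^{N_G(v)}\ast I_{n-1}(G[V\setminus N_G[v]])$, concluding via Lemma \ref{lemma:join_with_simplex}. Your explicit decomposition $\alpha(G[\sigma\cup\{v\}])=\max\{1+\alpha(G[\sigma\cap W]),\,\alpha(G[\sigma])\}$ is just a slightly more structured packaging of the paper's two-directional verification of that identity, with the simplicial hypothesis entering at the same point (the clique $N_G(v)$ forcing independent sets to meet it in at most one vertex).
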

\begin{proof}
	Let $W \subset V \setminus \{v\}$. Then, $W$ belongs to $\lk(I_n(G), v)$ if and only if $W \setminus N_G(v) \in I_{n-1}(G)$. Indeed, assume that $W\setminus N_G(v)\notin I_{n-1}(G)$; that is,  $G[W\setminus N_G(v)]$ contains an independent set $A$ of size $n-1$. Then,  $A \cup \{v\}$ is an independent set of size $n$ in $G$, and therefore $W \notin \lk(I_n(G), v)$.
	For the opposite direction, suppose $W\notin \lk(I_n(G),v)$. Then, 
	$W \cup \{v\}$ contains an independent set $A$ of size $n$ in $G$. Since $N_G[v]$ is a clique in $G$, $A$ contains at most one vertex from $N_G[v]$. Thus,  $A \setminus N_G[v]\subset W\setminus N_G(v)$ is an independent set of size at least $n-1$. So, $W\setminus N_G(v)\notin I_{n-1}(G)$.

	It follows that $\lk(I_n(G),v) = 2^{N_G(v)} * I_{n-1}(G[V\setminus N_G[v]])$.
	By  Lemma~\ref{lemma:join_with_simplex}, we have
	\[
		C(\lk(I_n(G),v)) = C(I_{n-1}(G[V\setminus N_G[v]])).
	\]
	So, by Lemma \ref{lemma:tancer}, we obtain
	\begin{align*}
	C(I_n(G))
		&\leq	\max \{ C(I_n(G\setminus v)), C(\lk(I_n(G),v))+1\}
	\\&= \max \{ C(I_n(G\setminus v)), C(I_{n-1}(G[V\setminus N_G[v]]))+1\}.
	\end{align*}
\end{proof}

\begin{proof}[Proof of Theorem~\ref{chordal collapse}]

We argue by induction on $|V|$. For $|V|=0$ the statement is obvious. Suppose $|V| > 0$. For $n=1$, $C(I_1(G))=C(\{\emptyset\})=0$, so the claim holds. Let $n\geq 2$. Since $G$ is a chordal graph, there exists a simplicial vertex $v$ in $G$. 
By the induction hypothesis, \[C(I_n(G-v))\leq n-1\] and \[C(I_{n-1}(G[V\setminus N_G[v]]))\leq n-2.\]
Hence, by Lemma~\ref{lemma simplical}, 
\[
	C(I_n(G))\leq  \max \{ C(I_n(G\setminus v)), C(I_{n-1}(G[V\setminus N_G[v]]))+1\} \leq n-1.
\]
\end{proof}

\begin{remark}
Let $G$ be a graph with $\alpha(G)\geq n$, and let $A$ be an independent set of size $n$ in $G$. Then $I_n(G)[A]$ is the boundary of an $(n-1)$-dimensional simplex, and in particular $\tilde{H}_{n-2}(I_n(G)[A])\neq 0$. 
Hence,  $C(I_n(G))\geq L(I_n(G))\geq n-1$. So, the bound in Theorem \ref{chordal collapse} is tight: any chordal graph $G$ with $\alpha(G) \geq n$ has $C(I_n(G)) = n-1$.
\end{remark}

\section{Graphs with bounded maximum degree}\label{sec:bounded_degree}

In this section we prove our main results about graphs with bounded maximum degree, Theorems \ref{cor:bounded degree}, \ref{thm:n=2} and \ref{thm:n=3}. We also prove an auxiliary result about claw-free graphs (Proposition \ref{prop:clawfree}), which will be later used for the proof of Theorem \ref{maincor}.

We begin with the following related problem: Let $\mathcal{X}(k)$ be the class of all $k$-colorable graphs.
In \cite{ABKK} it was observed that $f_{\mathcal{X}(k)}(n) = k(n-1)+1$. 
The following proposition (combined with Proposition \ref{prop:helly_corollary}) offers an alternative proof for this result.

\begin{proposition}\label{prop:easy}
	Let $G$ be a $k$-colorable graph and $n\geq 1$ an integer. Then,
	\[
		C(I_n(G))\leq k(n-1).
	\]
\end{proposition}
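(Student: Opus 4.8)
The plan is to fix, once and for all, a proper $k$-coloring $V=V_1\cupdot\cdots\cupdot V_k$ into independent sets and to exploit that each color class can contribute only $n-1$ vertices to any face. Indeed, the missing faces of $I_n(G)$ are exactly the independent sets of size $n$, so every face $U\in I_n(G)$ satisfies $|U\cap V_j|\le n-1$ for all $j$. One is tempted to compare $I_n(G)$ with the join of the $(n-2)$-skeleta of the simplices on the $V_j$, but this fails because collapsibility is not monotone under passing to subcomplexes. Instead I would prove the following strengthening, valid for \emph{every} face $B\in I_n(G)$:
\[
C(\lk(I_n(G),B))\le k(n-1)-|B|,
\]
the proposition itself being the case $B=\emptyset$. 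The strengthening is what makes an induction go through, because links of faces reappear precisely when one peels off a whole color class.

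The main tool will be Lemma~\ref{lemma:induction_step2}. I would prove the displayed inequality by induction on the number of \emph{active} color classes, meaning those $V_j$ with $V_j\not\subseteq B$. If there are no active classes, then $B=V\in I_n(G)$, so $\lk(I_n(G),B)=\{\emptyset\}$ has collapsibility $0$, while $|B|=|V|\le k(n-1)$ since $|B\cap V_j|\le n-1$ for each $j$; this settles the base case. Otherwise pick an active class $V_1$ and apply Lemma~\ref{lemma:induction_step2} to $X=\lk(I_n(G),B)$ with special vertex set $V_1\setminus B$ and $d=k(n-1)-|B|$. For a partition $(B_1,B_2)$ of $V_1\setminus B$ in the associated family $\mathcal P$, a direct computation of iterated links gives
\[
\lk\big(X[V\setminus B_1],B_2\big)=\lk\big(I_n(G[V\setminus B_1]),\,B\cup B_2\big),
\]
where $B\cup B_2\in I_n(G[V\setminus B_1])$ because $B_2\in X$ and $B\cup B_2\subseteq V\setminus B_1$.

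In the induced subgraph $G'=G[V\setminus B_1]$ the first color class becomes $V_1\setminus B_1=(B\cap V_1)\cup B_2$, which is entirely contained in the face $B\cup B_2$; hence $V_1$ is inactive for the pair $(G',B\cup B_2)$, while the status of every other class is unchanged, so the number of active classes has dropped by exactly one. The induction hypothesis applied to $G'$ (still $k$-colorable, with the restricted color classes) and the face $B\cup B_2$ therefore yields
\[
C\big(\lk(I_n(G'),B\cup B_2)\big)\le k(n-1)-|B\cup B_2|=k(n-1)-|B|-|B_2|=d-|B_2|,
\]
which is exactly the hypothesis required by Lemma~\ref{lemma:induction_step2}; hence $C(\lk(I_n(G),B))\le d$, completing the induction. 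The step I expect to demand the most care is the choice of this termination measure: in the extreme partition $B_1=\emptyset$ the vertex set does not shrink, so an induction on $|V|$ alone would stall, whereas counting active color classes still decreases (since then $B\cup B_2\supseteq V_1$), which is what makes the recursion well-founded. I note also that the cone condition built into $\mathcal P$ only \emph{reduces} the set of partitions that must be checked, so it can be safely ignored here: the link bound above holds for every partition of $V_1\setminus B$, not merely those lying in $\mathcal P$.
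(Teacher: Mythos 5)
Your induction is sound in substance, but it is a much heavier route than necessary, and in fact the observation you make in your first two sentences already finishes the proof. The paper argues in two lines: since each color class $V_j$ is independent, every face $\sigma\in I_n(G)$ satisfies $|\sigma\cap V_j|\le n-1$, hence $|\sigma|\le k(n-1)$, i.e. $\dim(I_n(G))\le k(n-1)-1$; now the crude bound $C(X)\le \dim(X)+1$ (Lemma~\ref{lemma:bdddim}) gives $C(I_n(G))\le k(n-1)$. Your worry that ``collapsibility is not monotone under passing to subcomplexes'' is a red herring: no comparison with a join of skeleta is needed, only the dimension count. The same shortcut also yields your strengthening for free, since $\dim(\lk(I_n(G),B))\le \dim(I_n(G))-|B|$, so $C(\lk(I_n(G),B))\le k(n-1)-|B|$ with no induction at all. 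What your argument buys instead is a dry run of the technique the paper reserves for the genuinely harder bounds (Propositions~\ref{prop:clawfree} and~\ref{prop:n3_odd}), where the dimension bound is too weak and Lemma~\ref{lemma:induction_step2} is really needed; your computation of the iterated link, your observation that the cone condition in $\mathcal{P}$ can be ignored, and your termination measure (active classes rather than $|V|$) are all correct and are exactly the moves the paper makes in those proofs.

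One technical wrinkle you should patch before the argument matches the lemma as stated: Lemma~\ref{lemma:induction_step2} takes $B$ inside the vertex set of $X$, but your special set $V_1\setminus B$ may contain elements that are not vertices of $\lk(I_n(G),B)$, namely any $v$ with $B\cup\{v\}\notin I_n(G)$ (this happens whenever $B$ contains an independent $(n-1)$-set whose union with $v$ is independent). Such $v$ can never lie in $B_2$, since $B_2\in X$ forces every element of $B_2$ to be a vertex of $X$; so they land in $B_1$ in every admissible partition, and the proof of Lemma~\ref{lemma:induction_step2} extends to this situation because deleting a non-vertex leaves $X$ unchanged. Alternatively, apply the lemma to $(V_1\setminus B)\cap V(X)$ and call a class active only if it meets the vertex set of the current link; one checks that after the move no vertex of $V_1$ survives into $\lk(I_n(G[V\setminus B_1]),B\cup B_2)$, so the measure still drops. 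Either repair is routine, but as written the application does not literally satisfy the lemma's hypotheses.
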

\begin{proof}
	Take a proper vertex-coloring of $G$ with $k$ colors. Note that each color class forms an independent set in $G$. Let $\sigma\in I_n(G)$. Since  $\sigma$ contains no independent set of size $n$ in $G$, it contains at most $n-1$ vertices from each color class. It follows that $|\sigma| \leq k(n-1)$. Hence, by Lemma~\ref{lemma:bdddim},
	\[
		C(I_n(G))\leq \dim(I_n(G))+1 \leq k(n-1).
	\]
\end{proof}

Next, we present the proof of Theorem \ref{cor:bounded degree}. We deal with the case $\Delta = 2$ separately:
\begin{theorem}\label{delta=2}
Let $G=(V,E)$ be a graph with maximum degree at most $2$ and $n\geq 1$ an integer. Then $I_n(G)$ is $2(n-1)$-collapsible.
\end{theorem}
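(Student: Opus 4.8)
The plan is to prove the statement by induction on $n$, and for each fixed $n$ by a secondary induction on $|V|$. The base case $n=1$ is immediate, since $I_1(G)=\{\emptyset\}$ has collapsibility $0$. For the inductive step I would first dispose of the bipartite case: a graph with $\Delta\le 2$ is bipartite precisely when it has no odd cycle, and then Proposition~\ref{prop:easy} (with $k=2$) already gives $C(I_n(G))\le 2(n-1)$. This handles all disjoint unions of paths and even cycles, so the only remaining case is when $G$ contains an odd cycle. There I would pick a vertex $v$ of degree $2$ (any vertex on the odd cycle) and apply Lemma~\ref{lemma:tancer}: $C(I_n(G))\le\max\{C(I_n(G\setminus v)),\,C(\lk(I_n(G),v))+1\}$. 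The deletion term is bounded by $2(n-1)$ via the secondary induction on $|V|$, since $G\setminus v$ still has $\Delta\le 2$. Everything thus reduces to showing $C(\lk(I_n(G),v))\le 2n-3$.

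The heart of the argument is analysing this link. Writing $N_G(v)=\{a,b\}$, one checks straight from the definitions that
\[
\lk(I_n(G),v)=\{W\subseteq V\setminus\{v\}:\ \alpha(G[W])<n \text{ and } \alpha(G[W\setminus\{a,b\}])<n-1\}.
\]
Unlike the simplicial-vertex case (Lemma~\ref{lemma simplical}), where $N[v]$ is a clique and the link collapses to a clean join, here $a\not\sim b$ and the link genuinely carries \emph{two} independence thresholds. This motivates introducing, for any graph $H$ with $\Delta(H)\le 2$, any two vertices $a,b\in V(H)$ of degree at most $1$ in $H$, and any $m\ge 2$, the auxiliary complex
\[
L_m(H;a,b)=\{W\subseteq V(H):\ \alpha(H[W])<m \text{ and } \alpha(H[W\setminus\{a,b\}])<m-1\}.
\]
The link above is exactly $L_n(G\setminus v;a,b)$, and $a,b$ have degree at most $1$ in $G\setminus v$ (each lost its neighbour $v$). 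I would then prove, by induction on $m$, that $C(L_m(H;a,b))\le 2m-3$, which is precisely what is needed.

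For this bound I peel off $a$ and then $b$ using Lemma~\ref{lemma:tancer}. The key computations all rely on the subadditivity $\alpha(H[A\cup B])\le\alpha(H[A])+\alpha(H[B])$ together with $|\{a,b\}|=2$ and $\deg_H(a),\deg_H(b)\le 1$. Namely: (i) $L_m(H;a,b)\setminus a=2^{\{b\}}\ast I_{m-1}(H[V(H)\setminus\{a,b\}])$, whose collapsibility equals $C(I_{m-1}(H[V(H)\setminus\{a,b\}]))\le 2(m-2)$ by Lemma~\ref{lemma:join_with_simplex} and the outer induction on $n$; (ii) $\lk(L_m(H;a,b),a)\setminus b$ simplifies (one threshold becomes redundant) to $I_{m-1}(H[V(H)\setminus\{a,b\}])$, again of collapsibility $\le 2(m-2)$; and (iii) the double link $\lk(\lk(L_m(H;a,b),a),b)$ is exactly $L_{m-1}(H';a',b')$, where $H'=H[V(H)\setminus\{a,b\}]$ is still of maximum degree $\le 2$ and $a',b'$ are the neighbours of $a,b$, which once more have degree $\le 1$ in $H'$. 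Feeding (i)--(iii) into Lemma~\ref{lemma:tancer} twice gives $C(L_m(H;a,b))\le\max\{2m-3,\,C(L_{m-1}(H';a',b'))+2\}$; with the base case $m=2$ (where the inner links become void, and a direct check yields $C(L_2(H;a,b))\le 1$) the induction closes to $C(L_m(H;a,b))\le 2m-3$.

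The main obstacle is exactly this link analysis: recognising that the link of a degree-$2$ vertex is a new complex with two independence thresholds, isolating the right self-similar family $L_m(H;a,b)$, and verifying that peeling \emph{both} neighbours of $v$ decrements $m$ and returns a complex of the same shape — so that the two Tancer increments are paid off by one drop in $m$. The degenerate cases (a neighbour of degree $0$, the coincidence $a'=b$, or very short components) need a little care but only produce smaller, easier complexes. Once the recursion is in place the numerics are tight, matching the fact that $C(I_n(C))=2(n-1)$ for an odd cycle $C$.
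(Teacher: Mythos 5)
Your proof is correct, and it takes a genuinely different route from the paper's. The paper proves the stronger Proposition~\ref{claim:delta2} (a bound $C(\lk(I_n(G),A))\le 2(n-1)-|A|$ for any independent set $A$ lying in path components) by induction on the \emph{number of cycles}: it processes an entire cycle at once, feeding an alternating set of $r=\min\{\lfloor k/2\rfloor,\,n-|A|-1\}$ cycle vertices into Lemma~\ref{lemma:induction_step}, with the acyclic base case handled by the chordal bound (Theorem~\ref{chordal collapse}) via Proposition~\ref{lem:link_coll}. You instead dispose of the bipartite case by $2$-colorability (Proposition~\ref{prop:easy}) and peel one degree-$2$ vertex of an odd cycle, which forces an analysis of the link at a \emph{non-simplicial} vertex; your identification $\lk(I_n(G),v)=L_n(G\setminus v;a,b)$ is correct, and I checked your computations (i)--(iii): in (i) and (ii) the threshold $\alpha(H[W])<m$ is indeed redundant because $\alpha(H[W])\le\alpha(H[W\setminus\{b\}])+1$, and in (iii) the double link equals $\{W:\alpha(H[W])<m-1,\ \alpha(H[W\setminus\{a',b'\}])<m-2\}=L_{m-1}(H';a',b')$ because, given $\alpha(H[W])\le m-2$, any independent $m$-set in $W\cup\{a,b\}$ must use both $a$ and $b$; moreover $a',b'$ do have degree at most $1$ in $H'$, so the family is self-similar and the two Tancer increments are paid by the drop from $2m-3$ to $2(m-1)-3$. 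The double induction is well-founded, since the auxiliary claim at parameter $m$ only invokes the theorem at $n'=m-1$. One small slip: your parenthetical ``here $a\not\sim b$'' fails when the odd cycle is a triangle; but this is exactly your flagged coincidence $a'=b$, where both defining thresholds of $L_m$ depend only on $W\setminus\{a,b\}$, so $L_m=2^{\{a,b\}}\ast I_{m-1}(H[V(H)\setminus\{a,b\}])$ has collapsibility at most $2(m-2)$ by Lemma~\ref{lemma:join_with_simplex} (equivalently, $v$ is simplicial and Lemma~\ref{lemma simplical} applies), and the other degenerate cases ($\deg_H(a)=0$, or $a'=b'$) likewise collapse to cones or joins within budget. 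In exchange for this extra case analysis, your argument is more local and elementary (no appeal to the chordal theorem or to Khmelnitsky's link monotonicity), and your two-threshold family $L_m(H;a,b)$ is a reusable non-simplicial analogue of Lemma~\ref{lemma simplical}; the paper's route buys a stronger, link-level statement (Proposition~\ref{claim:delta2}) and treats paths and even and odd cycles uniformly. Both arguments yield the tight bound $2(n-1)$.
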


Recall that a graph with maximum degree bounded by $2$ is a disjoint union of cycles and paths. In other to apply an inductive argument, we state the following more general claim:

\begin{proposition}\label{claim:delta2}
Let $G=(V,E)$ be a graph with maximum degree at most $2$.
Let $A$ be an independent set in $G$ of size at most $n-1$ that is contained in the union of all the components of $G$ that are paths.
Then,
\[
    C(\lk(I_n(G), A))\leq 2(n-1)-|A|.
\]
\end{proposition}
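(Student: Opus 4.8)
The plan is to proceed by induction on $|V|$. The base case $|V|=0$ is trivial, as is the case $n=1$ (where $A=\emptyset$ and $I_1(G)=\{\emptyset\}$), so I assume $n\ge 2$ and $|V|>0$. Since $G$ has maximum degree at most $2$, every component is a path or a cycle, and I split the inductive step into two cases: either $G$ has a component that is a path (equivalently, $G$ has a simplicial vertex $v$, namely an endpoint of a path or an isolated vertex), or every component of $G$ is a cycle.

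\textbf{Case A: $G$ has a path component.} Here I would fix a simplicial vertex $v$, with (unique) neighbor $u$ if $v$ is not isolated; note $v,u$ lie in a path component, so they interact with $A$ in a controlled way. The plan is to peel $v$ off using Lemma~\ref{lemma:tancer}, simplifying each resulting term via the computation behind Lemma~\ref{lemma simplical} (a simplicial vertex can always be placed in a maximum independent set) together with Lemma~\ref{lemma:conepoint}. Concretely: if $v\in A$, then $\lk(I_n(G),A)$ is a cone over $u$, and collapsing the cone identifies it with $\lk(I_{n-1}(G\setminus\{u,v\}),A\setminus v)$; if $v\notin A$, Lemma~\ref{lemma:tancer} bounds $C(\lk(I_n(G),A))$ by the maximum of $C(\lk(I_n(G\setminus v),A))$ and $C(\lk(I_n(G),A\cup\{v\}))+1$, where the deletion term is a smaller instance of the proposition, and the link term either is void (when $|A|=n-1$) or, again via a cone-over-$u$ argument, reduces to $\lk(I_{n-1}(G\setminus\{u,v\}),A')$ for the appropriate $A'$. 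In every subcase the resulting complex is an instance of the proposition on a graph with strictly fewer vertices (and $n$ decreased by $1$), so the induction hypothesis applies and the arithmetic then closes the case. Throughout, removing path vertices keeps all cycle components intact, and $A$ stays an independent set inside path components.

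\textbf{Case B: every component of $G$ is a cycle.} Then $A=\emptyset$, and I must show $C(I_n(G))\le 2(n-1)$. Here the plan is to apply Lemma~\ref{lemma:induction_step2} to $X=I_n(G)$ with $B=V(Q)$ the vertex set of one cycle component $Q$, equipped with the natural order $v_1<\dots<v_m$ along the cycle, and with $d=2(n-1)$. For a partition $(B_1,B_2)$ of $V(Q)$, one computes $\lk(X[V\setminus B_1],B_2)=I_{n-p}(G\setminus V(Q))$, where $p=\alpha(Q[B_2])$, because $B_2$ is exactly the vertex set of the path pieces into which $Q$ breaks after deleting $B_1$; by the induction hypothesis this has collapsibility at most $2(n-p-1)$. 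Two observations then finish the case. First, for every $(B_1,B_2)\in\mathcal P$ with $B_1\neq\emptyset$ the set $B_2$ induces a disjoint union of paths, so $|B_2|\le 2\alpha(Q[B_2])=2p$, giving $2(n-p-1)=2(n-1)-2p\le 2(n-1)-|B_2|=d-|B_2|$, as required. Second, the single partition with $B_1=\emptyset$, namely $(\emptyset,V(Q))$, is excluded from $\mathcal P$: processing the vertices in order, at the last vertex $v_m$ the relevant complex is $\lk(I_n(G),\{v_1,\dots,v_{m-1}\})$, and since $\alpha$ of the $Q$-part is unchanged by adding $v_m$ (because $\alpha(P_{m-1})=\alpha(C_m)$), this complex is a cone over $v_m$, so the cone condition fails there.

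The heart of the proof, and the step I expect to be the main obstacle, is Case B for \emph{odd} cycles. The naive dimension bound $C\le\dim+1$ (Lemma~\ref{lemma:bdddim}) overshoots by exactly one, because an entire odd cycle $C_m$ is a face of $I_n(G)$ of size $m=2\alpha(C_m)+1$; equivalently, the inequality $|B_2|\le 2p$ fails precisely for the full-cycle partition. The whole point of invoking Lemma~\ref{lemma:induction_step2} is that its cone condition removes exactly this one offending partition, while leaving all partitions that break the cycle, for which $B_2$ is a union of paths and the inequality $|B_2|\le 2p$ holds with room to spare. (Unions of even cycles are $2$-colorable and already handled by Proposition~\ref{prop:easy}, so the difficulty is genuinely confined to the presence of odd cycles.) I would need to verify carefully that no other partition with $B_1=\emptyset$ arises and that the cone computation at $v_m$ remains valid in the presence of the remaining cycle components.
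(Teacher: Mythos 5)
Your proposal is correct, but it takes a genuinely different route from the paper. The paper proves the proposition by induction on the number of cycles in $G$: the base case (a disjoint union of paths) is dispatched by citing the chordal-graph bound (Theorem~\ref{chordal collapse}) together with Proposition~\ref{lem:link_coll}, and the inductive step picks one cycle, chooses an independent set $U=\{v_1,v_3,\ldots,v_{2r-1}\}$ of alternate vertices with $r=\min\left\{\left\lfloor k/2\right\rfloor,\,n-|A|-1\right\}$, and feeds the chain of links $\lk(I_n(G\setminus v_{2i+1}),A\cup U_i)$ into Lemma~\ref{lemma:induction_step}; each deletion term has one fewer cycle, and the final link either loses $v_{2r+1}$ outright (when $r=n-|A|-1$) or splits off the whole cycle as a join $2^{C\setminus U_r}\ast\lk(I_{n-r}(G[V\setminus C]),A)$. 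You instead induct on $|V|$, re-derive the path reductions by hand (your Case A is essentially a relative, link-over-$A$ version of Lemma~\ref{lemma simplical}, via Lemma~\ref{lemma:tancer} and cone-point arguments, and all its subcases do close correctly, including the delicate one where $v\notin A$ is adjacent to $u\in A$, which yields $\lk(I_{n-1}(G\setminus\{u,v\}),A\setminus u)$), and you handle the all-cycles case by a one-shot application of Lemma~\ref{lemma:induction_step2} per cycle $Q$ with $B=V(Q)$ in cyclic order. Your key computations check out: for $B_1\neq\emptyset$ one gets $\lk(X[V\setminus B_1],B_2)=I_{n-p}(G\setminus V(Q))$ with $p=\alpha(Q[B_2])$ and $|B_2|\le 2p$ since $Q[B_2]$ is a union of paths; and since $\alpha(P_{m-1})=\alpha(C_m)=\lfloor m/2\rfloor$ for every $m$, the complex $\lk(I_n(G),\{v_1,\ldots,v_{m-1}\})$ is indeed a cone over $v_m$, so the unique partition $(\emptyset,V(Q))$ — the only one violating $|B_2|\le 2p$, and only genuinely problematic for odd $m$ — is excluded from $\mathcal P$. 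This is an attractive trade-off: the paper's argument is shorter because it reuses Theorem~\ref{chordal collapse} wholesale and removes cycles one at a time while carrying a general $A$ through the induction, whereas your argument pays for a longer Case A but isolates the conceptual point cleanly — the naive dimension bound overshoots by exactly one because of the full odd-cycle face, and the cone condition of Lemma~\ref{lemma:induction_step2} (which the paper only deploys later, for the claw-free results in Propositions~\ref{prop:clawfree} and~\ref{prop:n3_odd}) removes precisely that face.
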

\begin{proof}

We argue by induction on the number of cycles $c$ in $G$. 

If $c=0$, then $G$ is a disjoint union of paths. In particular, it is a chordal graph, and by Theorem \ref{chordal collapse}, $C(I_n(G))\leq n-1$.
By Proposition \ref{lem:link_coll}, we obtain
\[
C(\lk(I_n(G),A))\leq C(I_n(G))\leq n-1\leq 2(n-1)-|A|.
\]

Let $c\geq 1$, and assume that the claim holds for all graphs with less than $c$ cycles.
Let $C=\{v_1,\ldots,v_k\}$ be the vertex set of a cycle in $G$ (such that $\{v_i,v_{i+1}\}\in E$ for all $i\in [k]$, where the indices are taken modulo $k$).
Let \[r=\min\left\{\left\lfloor\frac{k}{2}\right\rfloor,n-|A|-1\right\},\] and let
\[
U= \{v_{2i-1} :\, 1\leq i\leq r\}.
\]
So, $U$ is an independent set in $G$ of size $r$.

For each $0\leq i\leq  r$, let $U_i=\{v_{2j-1} :\, 1\leq j\leq i\}$.
Let $0\leq i\leq r-1$. The graph $G\setminus v_{2i+1}$ has $c-1$ cycles, and the set $A\cup U_i$ is an independent set contained in components of $G\setminus v_{2i+1}$ that are paths. Therefore, by the induction hypothesis,
\[
C(\lk(I_n(G\setminus v_{2i+1}),A\cup U_i))\leq 2(n-1)-|A|-i.
\]
Next, we divide into two cases. First, assume $r=n-|A|-1<\left\lfloor\frac{k}{2}\right\rfloor$. Then $2r+1\leq k$, and, by the same argument as before, we obtain
\[
C(\lk(I_n(G\setminus v_{2r+1}),A\cup U_r))\leq 2(n-1)-|A|-r.
\]
Since $r=n-|A|-1$, the set $A\cup U_r\cup \{v_{2r+1}\}$ is an independent set of size $n$ in $G$; therefore, $v_{2r+1}\notin \lk(I_n(G),A\cup U_r)$. Hence,
\[
\lk(I_n(G),A\cup U_r)=\lk(I_n(G\setminus v_{2r+1}),A\cup U_r).
\]
So,
\[
C(\lk(I_n(G),A\cup U_r))\leq 2(n-1)-|A|-r.
\]
Now, assume $r=\left\lfloor\frac{k}{2}\right\rfloor$. Then, $U_r$ is a maximum independent set in $G[C]$, and we have
\[
    \lk(I_n(G),A\cup U_r)
    = 2^{C\setminus U_r} \ast \lk(I_{n-r}(G[V\setminus C]), A). 
\]
Therefore, by Lemma \ref{lemma:join_with_simplex}, we obtain
\begin{align*}
   C(\lk(I_n(G),A\cup U_r))
    &= C(\lk(I_{n-r}(G[V\setminus C]),A)) \leq 2(n-r-1)-|A|
    \\&= 2(n-1)-|A|-2r \leq 2(n-1)-|A|-r,
\end{align*}
where the first inequality follows by the induction hypothesis (since the number of cycles in $G[V\setminus C]$ is $c-1$).

In both cases we obtained
\[
C(\lk(I_n(G),A\cup U_r))\leq 2(n-1)-|A|-r.
\]
So, by Lemma \ref{lemma:induction_step}, we obtain
\[
    C(\lk(I_n(G),A))\leq 2(n-1)-|A|,
\]
as wanted.

\end{proof}

Theorem \ref{delta=2} follows from Proposition \ref{claim:delta2} by setting $A=\emptyset$.

Now we can prove the general case of Theorem \ref{cor:bounded degree}:

\begin{proof}[Proof of Theorem \ref{cor:bounded degree}]
	
	We argue by induction on $n$. For $n=1$ the claim is trivial. Assume $n\geq 2$.

    If $\Delta=1$ then the edges of $G$ are pairwise disjoint. In particular, $G$ is a chordal graph; therefore, the claim follows from Theorem \ref{chordal collapse}. 
	If $\Delta=2$, the claim follows from Theorem \ref{delta=2}. Assume $\Delta\geq 3$, and let $G$ be a graph with maximum degree at most $\Delta$. We will show that $C(I_n(G))\leq \Delta(n-1)$. Let $c(G)$ be the 
	number of connected components of $G$ that are isomorphic to the complete graph $K_{\Delta+1}$. We argue by induction on $c(G)$.	
	
	 If $c(G)=0$, then by Brooks' Theorem (Theorem \ref{thm:brooks}) $G$ is $\Delta$-colorable. Then, by Proposition \ref{prop:easy}, $I_n(G)$ is $\Delta(n-1)$-collapsible, as wanted.
	
	Otherwise, assume there exists a component of $G$ that is isomorphic to $K_{\Delta+1}$, and let $v$ be a vertex in that component. Note that $v$ is a simplicial vertex in $G$. Since $c(G\setminus v)=c(G)-1$, we obtain by the induction hypothesis
	\[
		C(I_n(G\setminus v))\leq \Delta(n-1).
	\]
	Also, by the (first) induction hypothesis, we have
	\[
		C(I_{n-1}(G[V\setminus N_G[v]]))\leq \Delta(n-2)\leq \Delta(n-1)-1.
	\]
	So, by Lemma \ref{lemma simplical}, we obtain
	\[
		C(I_n(G)) \leq \max\{	C(I_n(G\setminus v)),	C(I_{n-1}(G[V\setminus N_G[v]]))+1\}\leq  \Delta(n-1).
	\]

\end{proof}

\subsection{The $n\leq 3$ case and claw-free graphs}\label{sec:clawfree}

Next, we prove Theorems \ref{thm:n=2} and \ref{thm:n=3}, which give tight upper bounds on the collapsibility of $I_n(G)$ for graphs $G$ with bounded maximum degree, for $n\leq 3$. We also prove Proposition \ref{prop:clawfree}, bounding the collapsibility of certain subcomplexes of $I_n(G)$, in the case where $G$ is a bounded degree claw-free graph.

\begin{proof}[Proof of Theorem \ref{thm:n=2}]

We argue by induction on $|V|$. For $|V|=0$ the bound holds trivially.
Assume $|V|>0$, and let $v\in V$.
By Lemma \ref{lemma:tancer}, we have
\begin{equation}\label{eq:I2}
	C(I_2(G))\leq \max \{ C(I_2(G\setminus v)), C(\lk(I_2(G),v))+1\}.
\end{equation}
Note that $\lk(I_2(G),v)$ is a flag complex on vertex set $N_G(v)$. 
Thus, by Proposition~\ref{lemma:vertexset_bound}, we have
\[
C(\lk(I_2(G), v)) \leq
\left\lfloor\frac{|N_G(v)|}{2}\right\rfloor\leq
 \left\lfloor\frac{\Delta}{2}\right\rfloor = \left\lceil \frac{\Delta+1}{2} \right\rceil-1.
\]
Also, by the induction hypothesis, \[C(I_2(G\setminus v)) \leq \left\lceil \frac{\Delta+1}{2} \right\rceil.\]
Hence, by \eqref{eq:I2}, we obtain
\[
C(I_2(G))\leq \left\lceil \frac{\Delta+1}{2} \right\rceil.
\]
\end{proof}

\begin{lemma}\label{lemma:isflag}
Let $G=(V,E)$ be a graph and $n\geq 2$ an integer. Let $A$ be an independent set of size $n-1$ in $G$, such that any vertex in $V\setminus A$ is adjacent to at most two vertices in $A$. Let
\[
B=\bigcup_{\{u,v\}\in \binom{A}{2}} N_G(u)\cap N_G(v).
\]
Then, $\lk(I_n(G),A\cup B)$ is a flag complex.
\end{lemma}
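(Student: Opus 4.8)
The plan is to show that $\lk(I_n(G),A\cup B)$ has no missing face of size $\ge 3$; since a missing face of a link necessarily has size $\ge 2$ (all its singletons must already be faces), this is exactly flagness. I may assume $A\cup B\in I_n(G)$, as otherwise the link is the void complex, which is vacuously flag. Then $\alpha(G[A\cup B])=n-1$, so $A$ is a maximum independent set of $G[A\cup B]$. Every vertex $x$ of the link lies in $V\setminus(A\cup B)$, hence is adjacent to at most one vertex of $A$; and it must be adjacent to \emph{exactly} one, call it $a(x)$, since otherwise $A\cup\{x\}$ would be an independent set of size $n$ in $\{x\}\cup(A\cup B)$.

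Now suppose for contradiction that $\tau=\{x_1,\dots,x_m\}$ is a missing face with $m\ge 3$. I would choose an independent set $I$ with $|I|=n$ and $I\subseteq\tau\cup A\cup B$; minimality of $\tau$ forces $\tau\subseteq I$, so $I=\tau\sqcup I_A\sqcup I_B$ with $I_A\subseteq A$ and $I_B\subseteq B$. Put $A'=A\setminus I_A$ and, for $S\subseteq V$, write $N_A(S)=N_G(S)\cap A$. Then each $a(x_i)\in A'$, each $b\in I_B$ has exactly two neighbors in $A$ (both in $A'$), and $|A'|=(m+|I_B|)-1$. The main engine is the following Hall-type condition, forced by minimality: for every $S\subseteq\tau\cup I_B$ that omits at least one $x_i$ we have $|N_A(S)|\ge|S|$. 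Indeed, if $|N_A(S)|<|S|$ then $(A\setminus N_A(S))\cup S$ is an independent set of size $\ge n$ contained in $(S\cap\tau)\cup(A\cup B)$, contradicting that the proper subset $S\cap\tau\subsetneq\tau$ is a face. Taking $S$ to be a pair $\{x_i,x_j\}$ shows $a(x_i)\neq a(x_j)$, so $i\mapsto a(x_i)$ is injective; taking $S=\{x_i,x_j,b\}$ shows that no $b\in I_B$ has both its $A$-neighbors in $D:=\{a(x_1),\dots,a(x_m)\}$.

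To reach a contradiction I would encode $I_B$ as a multigraph $\Gamma$ on vertex set $A'$, with one edge $e_b=N_A(b)$ for each $b\in I_B$, and mark the $m$ distinct vertices of $D$. Applying the Hall condition to subsets of $I_B$ shows that $\Gamma$ is a pseudoforest (otherwise $I_B$ itself violates Hall, contradicting $A\cup B\in I_n(G)$). The hard part is turning the \emph{global} deficiency of $\tau\cup I_B$ into a usable local statement, and the tool for this is submodularity. Consider $f(S)=|N_A(S)|-|S|$ on subsets of $\tau\cup I_B$; it is submodular, satisfies $f(\tau\cup I_B)\le|A'|-(m+|I_B|)=-1$, and the Hall condition says $f\ge 0$ on every set omitting some $x_i$. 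Let $S_0$ be the minimal minimizer of $f$. Since $f(S_0)<0$, $S_0$ cannot omit any $x_i$, so $\tau\subseteq S_0$. Inspecting the marginal change of $f$ under a single deletion — deleting $x_i$ changes $f$ by $1-c_i$ with $c_i\in\{0,1\}$, and deleting $b\in I_B\cap S_0$ changes it by $1-d_b$ with $d_b\in\{0,1,2\}$ — and combining $f(S_0\setminus v)>f(S_0)$ (minimality of $S_0$) with $f\ge 0$ on proper sets, I obtain $f(S_0)=-1$, that every $a(x_i)$ is covered by $I_B\cap S_0$, and that in the submultigraph $\Gamma_0$ on $N_A(I_B\cap S_0)$ every edge-endpoint is either marked or has degree $\ge 2$.

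Finally I would count. The multigraph $\Gamma_0$ has no isolated vertices, has $m+|I_B\cap S_0|-1$ vertices and $|I_B\cap S_0|$ edges, and is a pseudoforest, so it has exactly $m-1$ tree components; each such component has at least two leaves, each leaf has degree $1$ and must therefore be marked, whence $2(m-1)\le|D|=m$, i.e.\ $m\le 2$, contradicting $m\ge 3$. This rules out missing faces of size $\ge 3$, so the link is flag. I expect the only genuinely delicate step to be the submodular minimal-minimizer analysis of the third paragraph — in particular the marginal-change bookkeeping that pins down $f(S_0)=-1$ together with the ``marked-or-degree-$\ge 2$'' dichotomy; the Hall reduction before it and the leaf count after it are routine once that dichotomy is in hand.
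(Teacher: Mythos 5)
Your proof is correct, but it reaches the conclusion by a genuinely different route than the paper. The common core is the first move: a missing face $\tau$ sits inside an independent set $I$ of size $n$ with $\tau\subseteq I\subseteq \tau\cup A\cup B$, and an exchange argument against the minimality of $\tau$ --- your Hall condition $|N_G(S)\cap A|\geq |S|$ for $S\subseteq\tau\cup I_B$ omitting some $x_i$, justified by the swap $(A\setminus N_A(S))\cup S$, is the same mechanism the paper uses, there in the guise of choosing $I$ with $|A\cap I|$ maximal and showing every vertex of $A\setminus I$ has at least two neighbours in $I\setminus A$. From that point the paper finishes in three lines by double-counting edges between $A$ and $I\setminus A$: with $t=|A\cap I|$ and $k=|\tau|$ there are at least $2(n-t-1)$ such edges, the vertices of $I\setminus (A\cup\tau)$ account for at most $2(n-t-k)$ of them, and each vertex of $\tau$ accounts for at most one since $\tau$ avoids $B$; hence $2k-2\leq k$, i.e.\ $k\leq 2$. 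You instead push the deficiency function $f(S)=|N_A(S)|-|S|$ through a submodular minimal-minimizer analysis, encode $I_B$ as a pseudoforest on $A'$, and count leaves of the tree components of $\Gamma_0$ against the marked set $D$. I checked the steps you flagged as delicate and they hold: strictness $f(S_0\setminus v)>f(S_0)$ follows because minimizers of a submodular function are closed under intersection, so the minimal minimizer has no minimizing proper subset; the marginal bookkeeping then forces $c_i=0$ and $d_b=0$, whence $f(S_0)=-1$ (delete one $x_i$ and use Hall), $D\subseteq N_A(I_B\cap S_0)$, and the marked-or-degree-$\geq 2$ dichotomy; finally $\Gamma_0$ has $V_0=m+E_0-1$ vertices, no loops and no isolated vertices, so it has exactly $V_0-E_0=m-1$ tree components, each with two leaves that must be marked, giving $2(m-1)\leq |D|=m$ and $m\leq 2$. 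The trade-off: the paper's argument is a paragraph long and entirely elementary, its one clever choice being the maximization of $|A\cap I|$; yours is much heavier machinery for the same statement, but it is more structural --- it locates the tight set $S_0$ and the exact combinatorial shape of a would-be obstruction, and the Hall/pseudoforest framework degrades more gracefully if one were to weaken the ``at most two neighbours in $A$'' hypothesis, where the paper's crude edge count is used exactly at its breaking point. One small remark: your parenthetical justification of pseudoforestness (``contradicting $A\cup B\in I_n(G)$'') and the Hall condition you derived from the missing face $\tau$ are two independent valid reasons; either one suffices, and stating both as if they were one step slightly blurs which hypothesis is doing the work.
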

\begin{proof}
Let $X=\lk(I_n(G),A\cup B)$, and let $\tau$ be a missing face of $X$. Then, there exists an independent set $I$ of $G$ of size $n$, such that $\tau\subset I\subset \tau\cup A\cup B$. We may choose $I$ such that $|A\cap I|$ is maximal. Each vertex in $A\setminus I$ is adjacent to at least two vertices in $I\setminus A$:
otherwise, assume there exists $a\in A\setminus I$ that is adjacent to at most one vertex in $I\setminus A$.  We divide into two cases:
\begin{itemize}
\item 
If $a$ is not adjacent to any vertex in $I \setminus A$, let $\tau'=\tau\setminus\{u\}$ for any vertex $u\in\tau$. 
\item 
If $a$ is adjacent to a single vertex $u\in I \setminus A$, observe that $u$ should  be contained in $\tau$.
If not, we can take an independent set $I' = I \setminus \{u\} \cup \{a\}$ of size $n$ in $G$ such that $\tau \subset I' \subset \tau \cup A \cup B$.
Since $|A \cap I'| = |A \cap I| + 1$, this contradicts the maximality assumption of $|A \cap I|$.
Hence, $u\in \tau$.
Now, let $\tau'=\tau\setminus\{u\}$.
\end{itemize}
In both cases, $I\setminus\{u\}\cup\{a\}$ is an independent set of size $n$ satisfying $\tau'\subset I\setminus\{u\}\cup\{a\}\subset \tau'\cup A\cup B$.
It follows that $\tau'\notin X$, which is a contradiction to $\tau$ being a missing face.

Let $|\tau|=k$ and $|A\cap I|=t$. Then, $|A\setminus I|=n-t-1$; so, there are at least $2(n-t-1)$ edges between $A$ and $I\setminus A$. 

By assumption, each vertex $v\in I\setminus (A\cup \tau)$ is adjacent to at most $2$ vertices in $A$.
Therefore, since $|I\setminus (A\cup \tau)|=n-t-k$,
there are at least $2(n-t-1)-2(n-t-k)=2k-2$ edges between $A$ and $\tau$. But, since $\tau\subset V\setminus B$, each vertex in $\tau$ is adjacent to at most one vertex in $A$. Therefore, we must have $2k-2\leq k$; that is, $|\tau|=k\leq 2$.
Thus, $X$ is a flag complex.
\end{proof}

\begin{proposition}\label{prop:clawfree}
Let $G=(V,E)$ be a claw-free graph with maximum degree at most $\Delta$, and let $n\geq 1$ be an integer. Let $A$ be an independent set of size $n-1$ in $G$. Then,
\[
    C(\lk(I_n(G),A))\leq \left\lfloor\frac{(n-1)\Delta}{2}\right\rfloor.
\]
\end{proposition}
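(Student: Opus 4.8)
The plan is to combine the flag-complex structure provided by Lemma~\ref{lemma:isflag} with the peeling argument of Lemma~\ref{lemma:induction_step2}, applied to the set of vertices having two neighbours in $A$. The case $n=1$ is trivial, since then $A=\emptyset$ and $\lk(I_1(G),A)=I_1(G)=\{\emptyset\}$ is $0$-collapsible; so I assume $n\geq 2$ and write $d=\left\lfloor (n-1)\Delta/2\right\rfloor$.

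First I would record the basic consequence of claw-freeness: for any vertex $w\notin A$, the set $N_G(w)\cap A$ is independent and contained in $N_G(w)$, so if it had size at least $3$ it would, together with $w$, span an induced claw. Hence every vertex outside $A$ is adjacent to \emph{at most two} vertices of $A$, which is exactly the hypothesis of Lemma~\ref{lemma:isflag}. Partitioning $V\setminus A$ according to the number of neighbours in $A$ into sets $V_0,V_1,V_2$, a double count of the edges between $A$ and $V\setminus A$ gives $|V_1|+2|V_2|=\sum_{a\in A}\deg_G(a)\leq (n-1)\Delta$. Moreover, a vertex with no neighbour in $A$ would extend $A$ to an independent set of size $n$, so the vertex set of $\lk(I_n(G),A)$ is contained in $V_1\cup V_2$, and $V_2$ equals the set $B=\bigcup_{\{u,v\}\in\binom{A}{2}}N_G(u)\cap N_G(v)$ appearing in Lemma~\ref{lemma:isflag}.

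Next I would apply Lemma~\ref{lemma:induction_step2} to $X=\lk(I_n(G),A)$ with the distinguished set taken to be $B=V_2$ (under any linear order). For a partition $(B_1,B_2)\in\mathcal{P}(X,B)$, the relevant complex is $\lk(X[V(X)\setminus B_1],B_2)$, which unwinds to $\lk(I_n(G\setminus B_1),A\cup B_2)$, since deleting vertices of $B_1\subset V\setminus A$ does not change the induced subgraph on any set avoiding $B_1$. Setting $H=G\setminus B_1$, the graph $H$ is again claw-free with the same adjacency pattern of $A$, and the set of vertices of $H$ adjacent to two vertices of $A$ is precisely $B\setminus B_1=B_2$. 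Therefore Lemma~\ref{lemma:isflag}, applied to $H$, shows that $\lk(I_n(H),A\cup B_2)$ is a \emph{flag} complex.

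Finally I would bound this flag complex by vertex count. Its vertex set $W$ consists of vertices $w$ with $\{w\}\cup A\cup B_2\in I_n(H)$; such a $w$ cannot lie in $V_0$ (else $A\cup\{w\}$ is independent of size $n$) nor in $B_2$, so $W\subseteq V_1$ and $|W|\leq |V_1|\leq (n-1)\Delta-2|V_2|\leq (n-1)\Delta-2|B_2|$. Since the complex is flag, all its missing faces have dimension at most $1$, so Proposition~\ref{lemma:vertexset_bound} yields
\[
C\big(\lk(I_n(H),A\cup B_2)\big)\leq \left\lfloor\frac{|W|}{2}\right\rfloor\leq \left\lfloor\frac{(n-1)\Delta}{2}\right\rfloor-|B_2|=d-|B_2|.
\]
This is exactly the hypothesis required by Lemma~\ref{lemma:induction_step2}, which then gives $C(X)=C(\lk(I_n(G),A))\leq d$, as desired. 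The main point to verify carefully is the identification $\lk(X[V(X)\setminus B_1],B_2)=\lk(I_n(G\setminus B_1),A\cup B_2)$ together with the equality $V_2(H)=B_2$, since these are what let Lemma~\ref{lemma:isflag} apply to each peeled-off piece; the quantitative heart of the argument is the double count $|V_1|+2|V_2|\leq (n-1)\Delta$, which makes the floor come out exactly right.
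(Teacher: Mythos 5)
Your proof is correct and follows essentially the same route as the paper's: claw-freeness gives at most two $A$-neighbours per vertex, Lemma~\ref{lemma:isflag} makes each peeled complex $\lk(I_n(G\setminus B_1),A\cup B_2)$ flag, Proposition~\ref{lemma:vertexset_bound} bounds it by half its vertex count, and Lemma~\ref{lemma:induction_step2} applied to $B=V_2$ assembles the bound. Your double count $|V_1|+2|V_2|\leq (n-1)\Delta$ is just a rephrasing of the paper's inclusion--exclusion computation of $|N_G(A)|$, and your verification of the link identification matches the paper's $\lk(I_n(G)[V\setminus B_1],A\cup B_2)=\lk(I_n(G'),A\cup B_2)$.
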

\begin{proof}
For $n=1$ the claim holds trivially. Assume $n\geq 2$.

Let $v\in V\setminus (A\cup N_G(A))$. Then, $A\cup\{v\}$ is an independent set of size $n$ in $G$; hence, $v\notin \lk(I_n(G),A)$. So, we may assume without loss of generality that $V=N_G(A)\cup A$.
Let
\[
B=\bigcup_{\{u,v\}\in \binom{A}{2}} N_G(u)\cap N_G(v)
\]
and $U=N_G(A)\setminus B$. Since $G$ is claw-free, each vertex is adjacent to at most $2$ vertices in $A$. Hence, we have
\[
|N_G(A)|= \sum_{v\in A} |N_G(v)| - \sum_{\{u,v\}\in \binom{A}{2}} |N_G(u)\cap N_G(v)| = \sum_{v\in A} |N_G(v)| - |B|.
\]
So, since the maximum degree in $G$ is at most $\Delta$, we obtain \[|U|\leq (n-1)\Delta-2|B|.\]

Let $(B_1,B_2)$ be a partition of $B$ such that $B_2\in\lk(I_n(G),A)$. Let $G'=G[V\setminus B_1]$, and let
\[
X=\lk(I_n(G)[V\setminus B_1],A\cup B_2)= \lk(I_n(G'),A\cup B_2).
\]
Note that
\[
B_2=\bigcup_{\{u,v\}\in \binom{A}{2}} N_{G'}(u)\cap N_{G'}(v)
\]
Also, since $G'$ is claw-free and $A$ is independent in $G'$, then every vertex in $V\setminus B_1$ is adjacent to at most $2$ vertices in $A$. Therefore, by Lemma \ref{lemma:isflag}, $X$ is a flag complex. 

The vertex set of $X$ is contained in $U=N_G(A)\setminus B$.
Thus, by Proposition \ref{lemma:vertexset_bound}, we obtain
\[
   C(X)\leq \left\lfloor\frac{|U|}{2}\right\rfloor \leq \left\lfloor
\frac{(n-1)\Delta-2|B|}{2}\right\rfloor\leq \left\lfloor\frac{(n-1)\Delta}{2}\right\rfloor-|B_2|.
\]
Therefore, by Lemma \ref{lemma:induction_step2}, \[
C(\lk(I_n(G),A))\leq \left\lfloor\frac{(n-1)\Delta}{2}\right\rfloor.
\]
\end{proof}

\begin{proposition}\label{prop:n3_odd}
Let $G=(V,E)$ be a graph with maximum degree at most $\Delta$. Let $A=\{a_1,a_2\}$ be an independent set of size $2$ in $G$. Assume that there exists an independent set in $G$ of the form $\{a_1,w,w'\}$, where $w,w'\in N_G(a_2)$, or there exists an independent set of the form $\{a_2,v,v'\}$, where $v,v'\in N_G(a_1)$. Then, 
\[
    C(\lk(I_3(G),A))\leq\begin{cases}
        \Delta & \text{ if $\Delta$ is even,}\\
        \Delta-1 & \text{ if $\Delta$ is odd.}
        \end{cases}
\]
\end{proposition}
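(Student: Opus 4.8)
The plan is to study $X := \lk(I_3(G),A)$ through its flag subcomplexes, reusing the machinery behind Proposition~\ref{prop:clawfree}. Write $N_1=N_G(a_1)\setminus N_G(a_2)$, $N_2=N_G(a_2)\setminus N_G(a_1)$, $N_{12}=N_G(a_1)\cap N_G(a_2)$ and $U=N_1\cup N_2$. Since $\{a_1,a_2\}$ is independent, every vertex non-adjacent to both $a_1$ and $a_2$ would complete $A$ to an independent triple, so the vertex set of $X$ lies in $N_1\cup N_2\cup N_{12}$, and a set $S$ is a face of $X$ if and only if $S\cap N_1$ and $S\cap N_2$ are cliques and $\alpha(G[S])\le 2$. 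Because $|A|=2$, every vertex meets $A$ in at most two vertices, so Lemma~\ref{lemma:isflag} applies with $B=N_{12}$ (and still applies after deleting any subset of $N_{12}$). By hypothesis, after possibly interchanging $a_1$ and $a_2$, I may assume there is a non-adjacent pair $w,w'\in N_2$.

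First I would dispatch the even case, which in fact needs no hypothesis. The proof of Proposition~\ref{prop:clawfree} specializes to $n=3$ essentially verbatim, once one notes that its use of claw-freeness is only to guarantee that each vertex meets $A$ in at most two vertices, which is automatic here. Concretely, apply Lemma~\ref{lemma:induction_step2} with $B=N_{12}$: for every admissible partition $(B_1,B_2)$, the complex $\lk(I_3(G)[V\setminus B_1],A\cup B_2)$ is flag by Lemma~\ref{lemma:isflag}, with vertex set contained in $U$, and $|U|=|N_1|+|N_2|\le 2\Delta-2|N_{12}|$. Proposition~\ref{lemma:vertexset_bound} then gives $C\le\lfloor|U|/2\rfloor\le\Delta-|N_{12}|\le\Delta-|B_2|$, and Lemma~\ref{lemma:induction_step2} yields $C(X)\le\Delta$.

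For the odd case the target drops to $\Delta-1$, so I must shave one unit from the estimate above in every partition. The lever is the equality clause of Proposition~\ref{lemma:vertexset_bound}: a flag complex on a vertex set of size $2m$ has collapsibility exactly $m$ only when its missing edges form a perfect matching of the vertices (the cocktail-party complex); otherwise its collapsibility is at most $m-1$. When $|U|<2\Delta-2|N_{12}|$ or $|B_2|<|N_{12}|$ the crude bound already gives $\lfloor|U|/2\rfloor\le\Delta-1-|B_2|$, so the only partition requiring attention is $B_2=N_{12}$ together with $|N_1|=|N_2|=\Delta-|N_{12}|$ and all of $U$ present as vertices. There it suffices to show that the flag complex $\lk(I_3(G),A\cup N_{12})$ is not a cocktail-party complex on $U$. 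To this end I would use that the guaranteed non-edge $\{w,w'\}\subset N_2$ is a missing edge, that each retained common neighbor $z\in N_{12}$ has at most $\Delta-2$ neighbors in $U$ and hence forces additional \emph{cross} missing edges $\{u,u'\}$ with $u\in N_1$, $u'\in N_2$, and that any perfect matching of $N_1\sqcup N_2$ uses a number of cross edges congruent to $|N_1|\bmod 2$; playing these counts against one another should exhibit a vertex of $U$ lying in two missing edges, so that no perfect matching exists.

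I expect this last step to be the main obstacle. Unlike the $N_{12}=\emptyset$ situation — where $X=I_2(G[N_1])\ast I_2(G[N_2])$ and a bare parity argument (an odd part cannot be perfectly matched within itself) already forbids the extremal configuration — the presence of common neighbors makes $X$ non-flag and creates cross missing edges, so the perfect-matching obstruction is no longer purely about parity. The technical heart is therefore a case analysis for the partition $B_2=N_{12}$, organized by the parity of $\Delta-|N_{12}|$ and by where $w,w'$ and the non-neighbors of the common neighbors sit inside $U$, showing in each case that the cocktail-party configuration cannot occur; the genuinely hypothesis-dependent subcase, as opposed to the parity subcase, should be the most delicate.
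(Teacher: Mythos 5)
Your scaffolding coincides with the paper's: reduce to $V=N_G(A)\cup A$, set $B=N_G(a_1)\cap N_G(a_2)$ and $U=N_G(A)\setminus B$ with $|U|\le 2\Delta-2|B|$, run Lemma~\ref{lemma:induction_step2} over $B$, use Lemma~\ref{lemma:isflag} to get flagness of $\lk(I_3(G)[V\setminus B_1],A\cup B_2)$, and invoke Proposition~\ref{lemma:vertexset_bound} to get the bound $\Delta-|B_2|$, which disposes of the even case (indeed without the hypothesis) and, via the equality clause, reduces the odd case to ruling out the single extremal configuration: $B_2=B$, all of $U$ present, $|U_1|=|U_2|=\Delta-|B|$, and the missing faces of $X$ forming a perfect matching of $U$. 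All of this is correct and is exactly how the paper proceeds. But the proof of the proposition essentially \emph{is} the exclusion of that extremal configuration (Claims~\ref{claim:odd1}--\ref{claim:odd7}, the bulk of the argument), and you leave it at ``playing these counts against one another should exhibit a vertex of $U$ lying in two missing edges.'' That is a genuine gap, and your sketch of how to fill it is misdirected on two points.

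First, your mechanism for producing cross missing edges fails as stated: knowing that $z\in B$ has at most $\Delta-2$ neighbors in $U$, hence at least $\Delta-2k+2$ non-neighbors there, does not force an independent triple $\{z,u,u'\}$ with $u\in U_1$, $u'\in U_2$ --- all of $z$'s non-neighbors could lie on one side, or every non-neighbor in $U_1$ could be adjacent to every non-neighbor in $U_2$, in which case $z$ contributes no cross missing edge at all. What actually forces these edges is the non-cone clause in the definition of $\mathcal{P}$ in Lemma~\ref{lemma:induction_step2}: since $(\emptyset,B)\in\mathcal{P}$, the intermediate link at each stage is not a cone over $u_i$, which yields for every $u_i\in B$ a missing face $\{u_i,v_i,w_i\}$ with $v_i\in U_1$, $w_i\in U_2$, and --- crucially for the later counting --- the ordering property that $\{u_j,v_i,w_i\}$ is not independent for $j<i$ (Claim~\ref{claim:odd2}). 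You restrict to admissible partitions in the even case but never invoke this clause, and without it your ``forces additional cross missing edges'' is unsupported. Second, you never use the proposition's hypothesis quantitatively. In the paper it enters at a precise point: the guaranteed missing edge $\{w,w'\}\subset U_2$, being disjoint from $w_1,\ldots,w_k$ and from the extra parity-supplied edge endpoint $w_{i_0}'$, forces $|U_2|=\Delta-k\ge k+3$, i.e.\ $\Delta\ge 2k+3$ (Claim~\ref{claim:odd6}); this inequality is exactly what makes the final degree count close, since the neighborhoods $N_G(v_{i_0})$ and $N_G(w_{i_0})$ are pinned down to size exactly $\Delta$ (Claim~\ref{claim:odd7}) and the non-neighbor of $u_{i_0}$ produced by Claim~\ref{claim:odd5} then creates a second missing edge at $v_{i_0}$ or $w_{i_0}$, the desired contradiction. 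Your parity observation (the paper's Claim~\ref{claim:odd3}) is correct but is only the opening move; the hypothesis-dependent and order-dependent counting that follows it is the actual content of the proof, and it is absent from your proposal.
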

\begin{proof}

Let $v\in V\setminus (N_G(A)\cup A)$. Then $A\cup\{v\}$ is an independent set of size $3$ in $G$; hence, $v\notin \lk(I_3(G),A)$. So, we may assume without loss of generality that $V=N_G(A)\cup A$.

Let
\[
B= N_G(a_1)\cap N_G(a_2)
\]
and $U=N_G(A)\setminus B$.
Since the maximum degree of a vertex in $G$ is at most $\Delta$, we have
\[
 |N_G(A)|= |N_G(a_1)|+|N_G(a_2)|-|N_G(a_1)\cap N_G(a_2)|
 \leq 2\Delta-|B|.
\]
So, $|U|\leq 2\Delta-2|B|$.

Write $B=\{u_1,\ldots,u_k\}$.
Let $\mathcal{P}=\mathcal{P}(\lk(I_3(G),A),B)$ be the family of partitions $(B_1,B_2)$ of $B$ satisfying:
\begin{itemize}
\item $B_2\in \lk(I_3(G),A)$.
\item For any $u_i\in B_2$, the complex
\[
    \lk(I_3(G)[V\setminus\{u_j\in B_1:\,  j<i\}],A\cup\{u_j\in B_2:\,  j<i\})
\]
is not a cone over $u_i$.
\end{itemize}

Let $(B_1,B_2)\in\mathcal{P}$. Let $G'=G[V\setminus B_1]$, and let
\[
X=\lk(I_3(G)[V\setminus B_1],A\cup B_2)= \lk(I_3(G'),A\cup B_2).
\]
Note that $B_2=N_{G'}(a_1)\cap N_{G'}(a_2)$. Also, since $A$ is of size $2$, then every vertex in $V\setminus B_1$ is adjacent to at most $2$ vertices in $A$. Therefore, by Lemma \ref{lemma:isflag}, $X$ is a flag complex. 

The vertex set of $X$ is contained in $U=N_G(A)\setminus B$.
So, by Proposition \ref{lemma:vertexset_bound}, we obtain
\begin{equation}
\label{eq:I3_1}
   C(X)\leq \frac{|U|}{2}
\leq 
\frac{2\Delta-2|B|}{2} = \Delta - |B| \leq \Delta-|B_2|.
\end{equation}
Therefore, by Lemma \ref{lemma:induction_step2},
\[
C(\lk(I_3(G),A))\leq \Delta.
\]
Now, assume $\Delta$ is odd.
Again, let $(B_1,B_2)\in\mathcal{P}$, and let \[
X=\lk(I_3(G)[V\setminus B_1],A\cup B_2). \]
If $B_2\neq B$ then, by \eqref{eq:I3_1},
\[
    C(X)\leq \Delta- |B| \leq \Delta-1 - |B_2|.
\]
So, assume $B_2=B$. By the equality case of Proposition \ref{lemma:vertexset_bound}, we have $C(X)\leq \Delta-1-|B|$ unless $X$ contains exactly $2\Delta-2|B|$ vertices, and its set of missing faces consists of $\Delta-|B|=\Delta-k$ pairwise disjoint sets of size $2$. 

Assume for contradiction that this is the case.
Then, $X$ is a simplicial complex on vertex set $U=U_1\cup U_2$, where $U_1=N_G(a_1)\setminus N_G(a_2)$ and $U_2=N_G(a_2)\setminus N_G(a_1)$, and $|U_1|=|U_2|=\Delta-k$.

\begin{claim}\label{claim:odd1}
Let $J$ be an independent set of size $3$ in $G$. Then $J$ is of one of the following forms:
\begin{itemize}
    \item $J=\{a_1,v,w\}$, where $v,w\in U_2$,
    \item $J=\{a_2,v,w\}$, where $v,w\in U_1$, or
    \item $J=\{u_i,v,w\}$ for some $i\in[k]$, where $v,w\in U$.
\end{itemize}
\end{claim}
\begin{proof}
Since $B_2 = B$ and $(B_1,B_2)\in\mathcal{P}$, we have $B\in \lk(I_3(G),A)$.
Thus, any independent set $J$ of size $3$ in $G$ contains at least one vertex from $U$.
Also, since $X$ is a flag complex, at least one vertex in $J$ must belong to $A\cup B$ (otherwise $J$ is a missing face of size $3$ in $X$).

Note that since $U\subset N_G(A)$, each independent set of size $3$ contains at most one of the vertices $a_1$ or $a_2$.

Assume that $a_1\in J$. Then, for all $i\in[k]$, $u_i\notin J$. Otherwise, the unique vertex $v$ in $J\setminus \{a_1,u_i\}$ does not belong to $X$, a contradiction to the assumption that the vertex set of $X$ is the whole set $U$. So, the two vertices in $J\setminus\{a_1\}$ must belong to $U$. And, since all the vertices in $U_1$ are adjacent to $a_1$, they must in fact belong to $U_2$, as wanted.

Similarly, if $a_2\in J$, then the two vertices in $J\setminus\{a_2\}$ must belong to $U_1$.

Now, assume that $a_1,a_2\notin J$. Then, there exists some $i\in[k]$ such that $u_i\in J$. For all $j\in[k]\setminus\{i\}$, $u_j\notin J$, otherwise the unique vertex $v$ in $J\setminus\{u_i,u_j\}$ does not belong to $X$, a contradiction to the assumption that the vertex set of $X$ is the whole set $U$. So, the two vertices in $J\setminus \{u_i\}$ must belong to $U$, as wanted.
\end{proof}

\begin{claim}\label{claim:odd2}
There exist distinct vertices $v_1,\ldots,v_k\in U_1$ and $w_1,\ldots,w_k\in U_2$ such that:
\begin{itemize}
    \item  For all $i\in[k]$, $\{u_i,v_i,w_i\}$ is an independent set in $G$.
    \item For all $1\leq j<i\leq k$, $\{u_j,v_i,w_i\}$ is not independent in $G$.
\end{itemize}
\end{claim}
\begin{proof}
We define the vertices $v_1,\ldots,v_k,w_1,\ldots,w_k$ recursively, as follows.
Let $i\in[k]$, and assume that we already defined $v_1,\ldots,v_{i-1}$ and $w_1,\ldots,w_{i-1}$. Since $(B_1,B_2)=(\emptyset,B)\in \mathcal{P}$, then the complex
\[
    X'=\lk(I_3(G),A\cup \{u_j\in B:\, j<i\})
\]
is not a cone over $u_i$. Therefore, there exists a missing face $\tau$ of $X'$ containing $u_i$. Since $\tau$ is a missing face of $X'$, there exists an independent set $J$ of size $3$ in $G$ containing $\tau$.
By Claim \ref{claim:odd1}, $J$ is of the form $J=\{u_i,v_i,w_i\}$, for some $v_i,w_i\in U$.

Note that actually $J=\tau$. Otherwise, assume without loss of generality that $\tau=\{u_i,v_i\}$. Then $w_i\notin X'$. But then $w_i\notin X$, a contradiction to the assumption that the vertex set of $X$ is the whole set $U$.

If both $v_i$ and $w_i$ belong to $U_1$, or both of them belong to $U_2$, then $\{v_i,w_i\}\notin X'$, a contradiction to $\{u_i,v_i,w_i\}$ being a missing face. So, we may assume that $v_i\in U_1$ and $w_i\in U_2$. Moreover, for all $j<i$,  
$\{u_j,v_i,w_i\}$ is not independent in $G$, otherwise $\{v_i,w_i\}\notin X'$, a contradiction to $\{u_i,v_i,w_i\}$ being a missing face.

The pairs $\{\{v_i,w_i\}\}_{i\in[k]}$ are missing faces of the complex $X$. Hence, they must be pairwise disjoint. Thus, the vertices $v_1,\ldots,v_k,w_1,\ldots,w_k$ are all distinct. 
\end{proof}

\begin{claim}\label{claim:odd3}
There exist some $i_0\in[k]$ and vertices $v_{i_0}'\in U_1\setminus\{v_1,\ldots,v_k\}$, $w_{i_0}'\in U_2\setminus\{w_1,\ldots,w_k\}$ such that $\{u_{i_0},v_{i_0}',w_{i_0}'\}$ is independent in $G$.

\end{claim}
\begin{proof}

Recall that, by assumption, the missing faces of $X$ consist of $\Delta-k$ pairwise disjoint sets of size $2$. In particular, each vertex $v\in U$ belongs to exactly one missing face of $X$.

Assume for contradiction that the only missing faces of $X$
of the form $\{v,w\}$, where $v\in U_1$ and $w\in U_2$, are the pairs $\{v_i,w_i\}$, $i\in[k]$, from Claim \ref{claim:odd2}. 

Then, the $\Delta-2k$ remaining missing faces must be of the form $\{v,w\}$, where $v,w\in U_1$ or $v,w\in U_2$. In particular, the set $U_1\setminus \{v_1,\ldots,v_k\}$ must be of even size (otherwise, there exists a vertex $v\in U_1\setminus \{v_1,\ldots,v_k\}$ that does not belong to any missing face of $X$, a contradiction). But \[|U_1\setminus \{v_1\ldots,v_k\}|=\Delta-2k\] 
is odd, since $\Delta$ is odd.

Therefore, there exists some additional missing face of the form $\{v,w\}$, where $v\in U_1$, $w\in U_2$. That is, there is some $i_0\in [k]$ such that $\{u_{i_0},v,w\}$ is independent in $G$. So, we can choose $v_{i_0}'=v$ and $w_{i_0}'=w$.
\end{proof}

\begin{claim}\label{claim:odd6}
$\Delta\geq 2k+3.$
\end{claim}
\begin{proof}
Assume without loss of generality that there exists an independent set in $G$ of the form $\{a_1,w,w'\}$, where $w,w'\in N_G(a_2)$. Then, the set $\{w,w'\}$ is a missing face in $X$. Since the missing faces of $X$ are all disjoint, the vertices $w_1,\ldots,w_k,w_{i_0}',w,w'\in U_2$ must be all distinct. Therefore,
\[
    \Delta-k = |U_2|\geq k+3.
\]
Hence, $\Delta\geq 2k+3$.
\end{proof}

Let
\[
S=\{j\in[k]\setminus\{{i_0}\}: \, \{v_{i_0},u_j\}\notin E \text{ or } \{w_{i_0},u_j\}\notin E\}.
\]

\begin{claim}\label{claim:odd7}
There exists a set $N_1$ consisting of exactly one vertex from each pair $\{w_j,u_j\}$, for all $j\in S$, such that
\[
    N_G(v_{i_0})=\{a_1\}\cup\{w_{i_0}'\}\cup (U_1\setminus \{v_{i_0}\})\cup \{u_j:\, j\in [k]\setminus (S\cup\{{i_0}\})\} \cup N_1.
\]
In particular, $|N_G(v_{i_0})|=\Delta$.

Similarly, there exists a set $N_2$ consisting of exactly one vertex from each pair $\{v_j,u_j\}$, for all $j\in S$, such that
\[
    N_G(w_{i_0})=\{a_2\}\cup\{v_{i_0}'\}\cup (U_2\setminus \{w_{i_0}\})\cup \{u_j:\, j\in [k]\setminus (S\cup\{{i_0}\})\} \cup N_2.
\]
And, in particular, $|N_G(w_{i_0})|=\Delta$.
\end{claim}
\begin{proof}
We prove the claim for $v_{i_0}$. The proof for $w_{i_0}$ is identical.

First, since $v_{i_0}\in U_1$, then $a_1$ is adjacent to $v_{i_0}$. Also, for every $v_{i_0}\neq v\in U_1$, $v$ is adjacent to $v_{i_0}$, since otherwise the set $\{u_2,v_{i_0},v\}$ is independent in $G$, but then the set $\{v,v_{i_0}\}$ is a missing face of $X$ that intersects the missing face $\{v_{i_0},w_{i_0}\}$, a contradiction to the assumption that the missing faces are pairwise disjoint.

The vertex $w_{i_0}'$ must also be adjacent to $v_{i_0}$, otherwise $\{u_{i_0},v_{i_0},w_{i_0}'\}$ is an independent set in $G$. But then, $\{v_{i_0},w_{i_0}'\}$ is a missing face of $X$ intersecting the missing face $\{v_{i_0},w_{i_0}\}$, again a contradiction.

By the definition of $S$, $v_{i_0}$ is adjacent to $u_j$ for all $j\in[k]\setminus(S\cup\{{i_0}\})$.

Finally, let $j\in S$. If $\{v_{i_0},u_j\}\notin E$ and $\{v_{i_0},w_j\}\notin E$, then $\{v_{i_0},u_j,w_j\}$ is independent in $G$; therefore, $\{v_{i_0},w_j\}$ is a missing face of $X$, a contradiction. So, $v_{i_0}$ is adjacent to either $u_j$ or $w_j$. Let $S'=\{j\in S:\, \{u_j,v_{i_0}\}\in E\}$. Let
\[
    N_1=\{u_j:\, j\in S'\}\cup \{w_j:\, j\in S\setminus S'\}.
\]
Then, $N_1\subset N_G(v_{i_0})$. Let 
\[
N=\{a_1\}\cup\{w_{i_0}'\}\cup (U_1\setminus \{v_{i_0}\})\cup \{u_j:\, j\in [k]\setminus (S\cup\{{i_0}\})\} \cup N_1.
\]
We showed that $N\subset N_G(v_{i_0})$. Note that
\[
|N|=1+1+(\Delta-k-1)+(k-|S|-1)+|S|=\Delta.
\]
Since the maximal degree of a vertex in $G$ is at most $\Delta$, then we must have $N_G(v_{i_0})=N$, as wanted.
\end{proof}

\begin{claim}\label{claim:odd4}
For all $j\in [k]\setminus\{i_0\}$, $u_{i_0}$ is adjacent in $G$ to at least one of the vertices $v_j$ or $w_j$.
\end{claim}
\begin{proof}
Let $j\neq {i_0}$.
Assume for contradiction that $u_{i_0}$ is not adjacent to any of the two vertices $v_j$ and $w_j$. Then $\{u_{i_0},v_j,w_j\}$ is independent in $G$. So, by Claim \ref{claim:odd2}, we must have ${i_0}>j$.
Moreover, either $\{v_{i_0},u_j\}\in E$ or $\{w_{i_0}, u_j\}\in E$ (otherwise $\{u_j, v_{i_0},w_{i_0}\}$  is independent in $G$, a contradiction to Claim \ref{claim:odd2}). Assume without loss of generality that $\{v_{i_0},u_j\}\in E$. The vertex $v_{i_0}$ must be also adjacent to $w_j$, since otherwise the set $\{u_{i_0},v_{i_0},w_j\}$ is independent in $G$. But then $\{v_{i_0},w_j\}$ is a missing face of $X$, a contradiction to the assumption that the missing faces are pairwise disjoint.

But, by Claim \ref{claim:odd7}, the set of neighbors of $v_{i_0}$ in $G$, $N_G(v_{i_0})$, contains at most one of the vertices $u_j$ or $w_j$, a contradiction.

So, $u_{i_0}$ must be adjacent in $G$ to at least one of the vertices $v_j$ or $w_j$.
\end{proof}

\begin{claim}\label{claim:odd5}
There is some vertex \[w\in U\setminus (\{v_j,w_j: \, j\in S\}\cup \{v_{i_0},w_{i_0},v_{i_0}',w_{i_0}'\})\] such that $\{u_{i_0},w\}\notin E$.
\end{claim}
\begin{proof}
Let $U'=U\setminus (\{v_j,w_j: \, j\in S\}\cup \{v_{i_0},w_{i_0},v_{i_0}',w_{i_0}'\})$. The vertex $u_{i_0}$ is adjacent in $G$ to both $a_1$ and $a_2$ (since $u_{i_0}\in B=N_G(a_1)\cap N_G(a_2)$). Also, by Claim \ref{claim:odd4}, it is adjacent to at least $|S|$ vertices from the set $\{v_j, w_j:\, j\in S\}$.
By the definition of $S$, for each $j \in S$, $u_j$ is not adjacent to one of the vertices $v_{i_0}$ or $w_{i_0}$. 
Thus $u_{i_0}$ must be adjacent in $G$ to $u_j$ (otherwise, one of the sets $\{u_j,u_{i_0},v_{i_0}\}$ or $\{u_j,u_{i_0},w_{i_0}\}$ is independent in $G$, in contradiction to Claim \ref{claim:odd1}).

So, $u_{i_0}$ is adjacent to at least $2|S|+2$ vertices outside of $U'$. Since the degree of $u_{i_0}$ is at most $\Delta$, $u_{i_0}$ is adjacent to at most $\Delta-2-2|S|$ vertices in $U'$.

But $|U'|=|U|-2|S|-4= 2\Delta-2k-2|S|-4$. So, $u_{i_0}$ is not adjacent to at least $\Delta-2k-2$ vertices in $U'$. By Claim \ref{claim:odd6}, $\Delta\geq 2k+3$. Therefore, $u_{i_0}$ is not adjacent to at least one vertex $w\in U'$.
\end{proof}

Assume without loss of generality that the vertex $w$ from Claim \ref{claim:odd5} belongs to $U_2$. If $\{v_{i_0},w\}\notin E$, then $\{u_{i_0},v_{i_0},w\}$ is independent in $G$. But then, $\{v_{i_0},w\}$ is a missing face of $X$ intersecting $\{v_{i_0},w_{i_0}\}$, a contradiction to the assumption that all the missing faces are disjoint. So, $w\in N_G(v_{i_0})$. But this is a contradiction to Claim \ref{claim:odd7}.

Therefore, $C(X)\leq (\Delta-1)-|B|$; so, by Lemma \ref{lemma:induction_step2}, $\lk(I_3(G),A)$ is $(\Delta-1)$-collapsible.

\end{proof}

\begin{proposition}\label{prop:n3_linka1}
Let $\Delta\geq 2$. Let $G=(V,E)$ be a graph with maximum degree at most $\Delta$, and let $a_1\in V$. Then,
\[
    C(\lk(I_3(G),a_1))\leq
    \begin{cases}
        \Delta+1 & \text{ if $\Delta$ is even,}\\
        \Delta & \text{ if $\Delta$ is odd.}
        \end{cases}
\]
\end{proposition}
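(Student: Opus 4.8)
The plan is to bound $C(\lk(I_3(G), a_1))$ by peeling off the neighborhood of $a_1$ and applying the machinery already developed for links of independent pairs. The natural strategy follows the pattern of the other propositions in this section: introduce the vertices of $N_G(a_1)$ one at a time, and at each step either the current link is a cone (costing nothing by Lemma \ref{lemma:conepoint}) or we can charge the cost to a link over a pair. More precisely, I would set $B = N_G(a_1)$, order its vertices, and use Lemma \ref{lemma:induction_step2} with this $B$. For a partition $(B_1, B_2) \in \mathcal{P}$, the complex $\lk(I_3(G)[V \setminus B_1], \{a_1\} \cup B_2)$ needs to be controlled. When $|B_2| \geq 1$, pick $a_2 \in B_2$; then $\{a_1, a_2\}$ is an independent pair (since $B_2 \in \lk(I_3(G), a_1)$ forces $B_2$ to be independent and disjoint from $N_G(a_1)$\dots), and one can try to apply Proposition \ref{prop:n3_odd} or a direct flag-complex estimate to the resulting link.

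The key structural point I expect to exploit is the cone condition built into $\mathcal{P}$. If $\lk(I_3(G), \{a_1\} \cup \{u_j : j < i\})$ is not a cone over $u_i$, this forces the existence of a missing face through $u_i$, i.e.\ an independent triple $\{a_1, u_i, x\}$ or similar; this is exactly the hypothesis that triggers Proposition \ref{prop:n3_odd}. So the main case would be: take $a_2 = u_i$ the first vertex of $B_2$, verify that the non-cone condition supplies an independent set of the form $\{a_1, w, w'\}$ with $w, w' \in N_G(a_2)$ (or the symmetric form), and then invoke Proposition \ref{prop:n3_odd} to conclude $C(\lk(I_3(G'), \{a_1, a_2\})) \leq \Delta$ (or $\Delta - 1$ when $\Delta$ is odd). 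The remaining vertices of $B_2$ are then absorbed as the set $B$ of that proposition. The ``$+1$'' in the bound (versus $\Delta$ in Proposition \ref{prop:n3_odd}) accounts for the single extra collapse needed when we peel the first neighbor.

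The delicate step, and the one I would be most careful with, is the \emph{boundary case} $B_2 = \emptyset$, where we only delete vertices of $N_G(a_1)$ and never add any to the independent set. Here $X = \lk(I_3(G)[V \setminus B_1], \{a_1\})$, whose vertex set lies in $V \setminus (B_1 \cup \{a_1\})$; I would bound its dimension or show it is a flag complex (using that a simplex in this link avoids independent sets completing $a_1$ to a triple) and then apply Proposition \ref{lemma:vertexset_bound} or Lemma \ref{lemma:bdddim}. The parity distinction between even and odd $\Delta$ should propagate from Proposition \ref{prop:n3_odd} together with the floor in Proposition \ref{lemma:vertexset_bound}, so I would track the sizes $|N_G(a_1)| \leq \Delta$ and the number of common neighbors carefully to ensure the odd-$\Delta$ savings survive the extra $+1$.

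The part I anticipate being the main obstacle is verifying that the non-cone hypothesis of $\mathcal{P}$ genuinely delivers the specific independent-triple hypothesis required by Proposition \ref{prop:n3_odd}, rather than merely some missing face; the two independent-set forms in that proposition are asymmetric in $a_1$ and $a_2$, and showing that at least one of them always holds (possibly after relabeling) when the link fails to be a cone is where the combinatorial content lies. If that implication does not hold cleanly in every sub-case, I would fall back on a direct flag-complex argument for $\lk(I_3(G'), \{a_1, a_2\})$ analogous to the start of the proof of Proposition \ref{prop:n3_odd}, which already yields the weaker bound $\Delta$ and suffices to establish the even-$\Delta$ case $\Delta + 1$.
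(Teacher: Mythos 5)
There is a genuine gap, and it sits exactly where you placed your main structural choice. Taking $B = N_G(a_1)$ in Lemma \ref{lemma:induction_step2} means every $B_2 \subseteq B$ consists of \emph{neighbors} of $a_1$, so for any $a_2 \in B_2$ the pair $\{a_1,a_2\}$ is an edge, not an independent set; your parenthetical claim that $B_2 \in \lk(I_3(G),a_1)$ forces $B_2$ to be ``independent and disjoint from $N_G(a_1)$'' is false by construction (all of $N_G(a_1)$ lies in this link, and a face $B_2$ of the link need not be independent in $G$). Consequently Proposition \ref{prop:n3_odd} and Lemma \ref{lemma:isflag}, both of which require $A$ to be an independent set, never apply, and the entire reduction fails. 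Your boundary case $B_2=\emptyset$ does not close either: $\lk(I_3(G)[V\setminus B_1],a_1)$ is in general not flag (it has missing faces of size $3$, e.g.\ independent triples $\{u,v,w\}$ with $u,v\in N_G(a_1)$ and $w\notin N_G(a_1)$) and has unbounded dimension (it contains every clique of $G\setminus a_1$), so Proposition \ref{lemma:vertexset_bound} only yields $\bigl\lfloor 2|V'|/3\bigr\rfloor$, which is unbounded in $|V'|$; neither route gives $\Delta+1$.

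The paper's proof peels off \emph{non}-neighbors instead, by induction on the number of vertices $|V'|$ of $\lk(I_3(G),a_1)$, using Lemma \ref{lemma:tancer} rather than Lemma \ref{lemma:induction_step2}. If $|V'|\leq \Delta$, Proposition \ref{lemma:vertexset_bound} gives $C \leq 2\Delta/3 \leq d-1$ directly; if $|V'| > \Delta$, the degree bound guarantees a vertex $a_2 \in V'$ with $a_2 \notin N_G(a_1)$, so $\{a_1,a_2\}$ really is an independent pair, and one splits into three cases. Cases 1 and 2 are precisely the two triple configurations (an independent $\{u,v,a_2\}$ with $u,v\in N_G(a_1)$, or an independent $\{a_1,u,v\}$ with $u,v \in N_G(a_2)$) that trigger Proposition \ref{prop:n3_odd}, yielding $C(\lk(I_3(G),\{a_1,a_2\}))\leq d-2$. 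Your instinct that the non-cone condition delivers only ``some missing face'' rather than these specific configurations was correct, but the paper's resolution of the remaining case is not a weaker flag bound: in Case 3 it shows that every $w \in N_G(a_2)\setminus N_G(a_1)$ lies in no missing face of $\lk(I_3(G),\{a_1,a_2\})$, so this link decomposes as $2^{N_G(a_2)\setminus N_G(a_1)} \ast \lk(I_3(G[U]),\{a_1,a_2\})$ with $U = N_G(a_1)\cup\{a_1,a_2\}$, and Proposition \ref{lemma:vertexset_bound} on at most $\Delta$ effective vertices gives $2\Delta/3 \leq d-2$ for all $\Delta\geq 2$. This is essential for odd $\Delta$: your proposed fallback (the flag-complex estimate at the start of Proposition \ref{prop:n3_odd}) only gives $\Delta$ for the pair-link, hence $\Delta+1$ overall, losing the odd-$\Delta$ saving entirely — and as written it is not even available to you, since your $\{a_1,a_2\}$ is not independent.
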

\begin{proof}
Let $d=\Delta+2$ if $\Delta$ is even, and $d=\Delta+1$ if $\Delta$ is odd.
Let $V'$ be the vertex set of $\lk(I_3(G),a_1)$.
We argue by induction on $|V'|$. If $|V'|\leq \Delta$, then by Proposition \ref{lemma:vertexset_bound},
\[
C(\lk(I_3(G),a_1))\leq \frac{2|V'|}{3}\leq \frac{2\Delta}{3}\leq d-1,
\]
as wanted.
Otherwise, let $|V'|>\Delta$. We divide into three different cases:
\begin{itemize}
    \item[Case 1:] There exists an independent set in $G$ of the form $\{u,v,a_2\}$, where $u,v\in N_G(a_1)$ and $a_2\notin N_G(a_1)$. Then, by Proposition \ref{prop:n3_odd}, we have 
    \[
        C(\lk(I_3(G),\{a_1,a_2\}))\leq d-2.
    \]

    \item[Case 2:] There exists a triple $\{u,v,a_2\}\subset V'$ such that $u,v,a_2\notin N_G(a_1)$, $\{u,v\}\notin E$, $\{u,a_2\}\in E$ and $\{v,a_2\}\in E$. Then, $\{a_1,u,v\}$ is an independent set in $G$, and $u,v\in N_G(a_2)$. Thus, by Proposition \ref{prop:n3_odd}, 
     \[
        C(\lk(I_3(G),\{a_1,a_2\}))\leq d-2.
    \]

    \item[Case 3:] Assume none of the two first cases holds. Since $|V'|>\Delta$, there exists a vertex $a_2\in \lk(I_3(G),a_1)$ such that $a_2\notin N_G(a_1)$ (otherwise $\deg_G(a_1)=|N_G(a_1)|>\Delta$, a contradiction).

    Let $w\in N_G(a_2)\setminus N_G(a_1)$. Note that $w\in \lk(I_3(G),\{a_1,a_2\})$.
    
    Assume for contradiction that there exists a missing face $\tau$ of the complex $\lk(I_3(G),\{a_1,a_2\})$ that contains $w$. First, assume that $\tau=\{u,v,w\}$ is an independent set of size $3$. Then, both $u$ and $v$ must belong to $N_G(a_1)$. Otherwise, assume without loss of generality that $v\notin N_G(a_1)$. Then $\{w,v,a_1\}$ is an independent set in $G$, and therefore $\{v,w\}\notin \lk(I_3(G),\{a_1,a_2\})$, a contradiction to $\tau$ being a missing face. But then, the existence of the independent set $\{u,v,w\}$ is a contradiction to the assumption that Case $1$ does not hold.
    
    Now, assume $\tau=\{v,w\}$ is of size $2$. Then there exists an independent set $J$ of size $3$ such that $\tau\subset J\subset \tau\cup\{a_1,a_2\}$. Since $w\in N_G(a_2)$, we must have $J=\{a_1,v,w\}$. In particular $v\notin N_G(a_1)$. So, we must have $v\in N_G(a_2)$. But then, the triple $\{a_2,v,w\}$ satisfies $a_2,v,w\notin N_G(a_1)$, $\{v,w\}\notin E$, $\{a_2,v\}\in E$ and $\{a_2,w\}\in E$. This is a contradiction to the assumption that Case $2$ does not hold.
    
    Therefore, $w$ is not contained in any missing face of $\lk(I_3(G),\{a_1,a_2\})$. Let $U=N_G(a_1)\cup\{a_1,a_2\}$. Then, we have
    \[
        \lk(I_3(G),\{a_1,a_2\})=2^{N_G(a_2)\setminus N_G(a_1)}\ast \lk(I_3(G[U]),\{a_1,a_2\}).
    \]
    So, by Lemma \ref{lemma:join_with_simplex}, we have 
    \[
        C(\lk(I_3(G),\{a_1,a_2\}))
        =C(\lk(I_3(G[U]),\{a_1,a_2\})).
    \]
    By Proposition \ref{lemma:vertexset_bound}, we obtain
    \[
    C(\lk(I_3(G),\{a_1,a_2\}))\leq \frac{2|N_G(a_1)|}{3}\leq \frac{2\Delta}{3}.
    \]
    Note that $\frac{2\Delta}{3}\leq \Delta$, and $\frac{2\Delta}{3}\leq \Delta-1$ for $\Delta\geq 3$. Hence, we obtain   
    \[
    C(\lk(I_3(G),\{a_1,a_2\}))\leq \frac{2\Delta}{3}\leq d-2
    \]
    for all $\Delta\geq 2$.

\end{itemize}

For any of the three cases we have $C(\lk(I_3(G\setminus a_2),a_1))\leq d-1$ by the induction hypothesis. Also, we showed that $C(\lk(I_3(G),\{a_1,a_2\}))\leq d-2$ in all three cases. So, by Lemma \ref{lemma:tancer},
\begin{align*}
C(\lk(I_3(G),a_1))
&\leq \max\{ 
			C(\lk(I_3(G\setminus a_2),a_1)),C(\lk(I_3(G),\{a_1,a_2\}))+1
		\}
\\
&\leq d-1,
\end{align*}
as wanted.
\end{proof}

\begin{theorem}
Let $G=(V,E)$ be a graph with maximum degree $\Delta$. Then, 
\[
    C(I_3(G))\leq\begin{cases}
        \Delta+2 & \text{ if $\Delta$ is even,}\\
        \Delta+1 & \text{ if $\Delta$ is odd.}
        \end{cases}
\]
\end{theorem}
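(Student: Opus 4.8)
The plan is to prove the bound on $C(I_3(G))$ by induction on $|V|$, using the vertex-by-vertex collapsing machinery of Lemma~\ref{lemma:tancer} together with the link bound just established in Proposition~\ref{prop:n3_linka1}. Set $d=\Delta+2$ if $\Delta$ is even and $d=\Delta+1$ if $\Delta$ is odd, so that the target bound is exactly $C(I_3(G))\leq d$. For $|V|=0$ the complex is void and the bound holds trivially; assume $|V|>0$ and pick an arbitrary vertex $a_1\in V$.

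The heart of the argument is to apply Lemma~\ref{lemma:tancer} to the vertex $a_1$, which gives
\[
C(I_3(G))\leq \max\{\,C(I_3(G\setminus a_1)),\ C(\lk(I_3(G),a_1))+1\,\}.
\]
For the first term, $G\setminus a_1$ is again a graph with maximum degree at most $\Delta$ on fewer vertices, so by the induction hypothesis $C(I_3(G\setminus a_1))\leq d$. For the second term, I would invoke Proposition~\ref{prop:n3_linka1} directly: it states that $C(\lk(I_3(G),a_1))\leq \Delta+1$ when $\Delta$ is even and $\leq\Delta$ when $\Delta$ is odd, i.e.\ $C(\lk(I_3(G),a_1))\leq d-1$ in both cases. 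Hence $C(\lk(I_3(G),a_1))+1\leq d$, and combining the two bounds yields $C(I_3(G))\leq d$, completing the induction.

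The main point to check is that the case split in the definition of $d$ matches the parity-dependent output of Proposition~\ref{prop:n3_linka1} so that the $+1$ from the link term is absorbed cleanly; once this bookkeeping is lined up, the proof is essentially immediate. In that sense the entire difficulty of this theorem has already been front-loaded into Proposition~\ref{prop:n3_linka1} (and, one level deeper, into Proposition~\ref{prop:n3_odd} with its long chain of claims about the structure of the flag complex $X$); the theorem itself is a short induction whose only obstacle is verifying the small edge case $|V|=0$ and confirming that Proposition~\ref{prop:n3_linka1}'s hypothesis $\Delta\geq 2$ does not cause trouble. If $\Delta\leq 1$ the graph is a disjoint union of edges and isolated vertices, hence chordal, so Theorem~\ref{chordal collapse} gives $C(I_3(G))\leq 2\leq d$, handling the remaining low-degree cases separately.
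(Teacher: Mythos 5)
Your proposal is correct and matches the paper's proof essentially verbatim: the same induction on $|V|$, applying Lemma~\ref{lemma:tancer} at an arbitrary vertex $a_1$ and absorbing the $+1$ via the parity-matched bound of Proposition~\ref{prop:n3_linka1}. The only (immaterial) difference is that you dispatch the low-degree cases $\Delta\leq 1$ via chordality and Theorem~\ref{chordal collapse}, while the paper cites Theorem~\ref{cor:bounded degree} for $\Delta=1$; both give $C(I_3(G))\leq 2\leq d$.
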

\begin{proof}
For $\Delta=1$ the claim holds by Theorem \ref{cor:bounded degree}. Assume $\Delta\geq 2$.

Let $d=\Delta+2$ if $\Delta$ is even, and $d=\Delta+1$ if $\Delta$ is odd.
We argue by induction on $|V|$. If $|V|=0$ the claim holds trivially. Otherwise, let $a_1\in V$. By the induction hypothesis, $C(I_3(G\setminus a_1))\leq d$. Also, by Proposition \ref{prop:n3_linka1}, 
$ C(\lk(I_3(G),a_1))\leq d-1$. So, by Lemma \ref{lemma:tancer},
\[
C(I_3(G))\leq \max\{ C(I_3(G\setminus a_1)), C(\lk(I_3(G),a_1))+1\}\leq d.
\]
\end{proof}

\section{Rainbow independent sets in claw-free graphs}\label{sec:rainbow}

Now we are ready to prove Theorem~\ref{maincor}.

\begin{proof}[Proof of Theorem \ref{maincor}]
We argue by induction on $n$. The case $n=1$ is trivial.
Now, assume $n>1$. 
Let $t = \left\lfloor\left(\frac{\Delta}{2}+1\right)(n-1)\right\rfloor + 1$ and let $J_1,\ldots,J_t$ be independent sets of size $n$ in $G$.
Since $t\geq \left\lfloor\left(\frac{\Delta}{2}+1\right)(n-2)\right\rfloor+1$, then, by the induction hypothesis, there exists a rainbow independent set $A$ of size $n-1$.
Without loss of generality, we may assume that $A = \{v_1,\ldots,v_{n-1}\}$, where $v_i \in J_i$ for all $i\in[n-1]$.

Let $X=\lk(I_n(G),A).$
By Proposition~\ref{prop:clawfree}, $X$ is $\left\lfloor\frac{\Delta}{2}(n-1)\right\rfloor$-collapsible. 

The family $\{J_i\}_{n\leq i\leq t}$ consists of $\left\lfloor\frac{\Delta}{2}(n-1)\right\rfloor+1$ sets not belonging to $X$. Thus, by Theorem \ref{km}, there exists a set $R=\{v_{n},\ldots,v_{t}\}$, where $v_i\in J_i$ for all $n\leq i\leq t$, such that $R\notin X$. Therefore, the set $A\cup R$ contains a set $I$ of size $n$ that is independent in $G$. $I$ is a rainbow independent set of size $n$ in $G$, as wanted.
\end{proof}

\section{Lower bounds on Leray numbers}\label{sec:lowerbound}

In this section we present some examples establishing the sharpness of our different bounds on the collapsibility of $I_n(G)$. Also, we present a family of counterexamples to Conjecture \ref{mainconj_collapsibility_version}, in the case of graphs with maximum degree at most $3$.

\subsection{Extremal examples}
Let $n$ be an integer, and $k$ be an even integer. Let $G_{k,n}$ be the graph obtained from a cycle of length $\left(\frac{k}{2}+1\right)n$ by adding all edges connecting any two vertices of distance at most $\frac{k}{2}$ in the cycle.
Note that $G_{k,n}$ is a $k$-regular graph, i.e. every vertex has degree exactly $k$.
Moreover, $G_{k,n}$ is claw-free.

In \cite{ABKK} it is shown that $f_{G_{k,n}}(n)\geq \left(\frac{k}{2}+1\right)(n-1)+1$. In particular, this shows the tightness of Theorem \ref{maincor}, in the case that $k$ is even.
Moreover, by Proposition \ref{prop:helly_corollary}, we obtain 
\[
C(I_n(G_{k,n}))\geq f_{G_{k,n}}(n)-1 \geq \left(\frac{k}{2}+1\right)(n-1).
\]
This shows that the bound in Conjecture \ref{mainconj_collapsibility_version}, whenever it holds, is tight. A different way to show this is as follows.

\begin{proposition}\label{prop:extremalex}
	\[
	\tilde{H}_i(I_n(G_{k,n}))= \begin{cases}
	\Rea & \text{ if } i=\left(\frac{k}{2}+1\right)(n-1)-1,\\
	0 & \text{ otherwise.}
	\end{cases}
	\]
	In particular, 
$
	L(I_n(G_{k,n}))\geq \left(\frac{k}{2}+1\right)(n-1).
$
\end{proposition}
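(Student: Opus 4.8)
The plan is to identify $I_n(G_{k,n})$ explicitly as a join of boundaries of simplices, and then read off both its homology and the Leray bound from the Künneth formula for joins (Theorem~\ref{thm:kunneth}).

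Write $\ell=k/2$ and label the vertices of the cycle by $0,1,\dots,N-1$ with $N=(\ell+1)n$, so that two vertices are adjacent in $G_{k,n}$ precisely when their cyclic distance is at most $\ell$. Thus a set is independent if and only if its elements are pairwise at cyclic distance at least $\ell+1$. First I would run the standard gap argument: an independent set of size $m$ partitions the cycle into $m$ cyclic gaps, each of length at least $\ell+1$, whose lengths sum to $N=(\ell+1)n$; hence $m\le n$, so $\alpha(G_{k,n})=n$, and an independent set of size exactly $n$ must have every gap equal to $\ell+1$. It follows that the independent sets of size $n$ are exactly the residue classes $S_a=\{v:\, v\equiv a \pmod{\ell+1}\}$ for $a=0,1,\dots,\ell$. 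These $\ell+1$ sets are pairwise disjoint, each of size $n$, and partition $V$.

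Next I would turn this into the structure of the complex. The missing faces of $I_n(G_{k,n})$ are exactly the minimal vertex sets containing an independent set of size $n$, namely the size-$n$ independent sets themselves; by the previous paragraph these are $S_0,\dots,S_\ell$. Consequently a set $U$ is a face if and only if $|U\cap S_a|\le n-1$ for every $a$, which is exactly the statement that
\[
    I_n(G_{k,n})= \partial_0 \ast \partial_1 \ast \cdots \ast \partial_\ell,
\]
where $\partial_a$ denotes the boundary of the full simplex on the vertex set $S_a$. (This realizes the equality case of Proposition~\ref{lemma:vertexset_bound} with $d=n-1$.) Finally I would compute: each factor $\partial_a$ is the boundary of an $(n-1)$-simplex, an $(n-2)$-sphere, with $\tilde H_{n-2}(\partial_a)\cong\Rea$ and all other reduced homology vanishing. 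Feeding this into Theorem~\ref{thm:kunneth} with $m=\ell+1$ factors, the only summand that survives is the one with $i_j=n-2$ for all $j$, forcing $i=\sum_j i_j+(m-1)=m(n-2)+(m-1)=m(n-1)-1=\left(\frac{k}{2}+1\right)(n-1)-1$, where the group is $\Rea^{\otimes m}\cong\Rea$; this is the claimed homology. For the Leray bound, $I_n(G_{k,n})$ is an induced subcomplex of itself with nonvanishing homology in dimension $\left(\frac{k}{2}+1\right)(n-1)-1$, so it is not $d$-Leray for any $d\le\left(\frac{k}{2}+1\right)(n-1)-1$, giving $L(I_n(G_{k,n}))\ge\left(\frac{k}{2}+1\right)(n-1)$.

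The main obstacle is the combinatorial identification in the first two paragraphs: one must check, via the gap count, both that a size-$n$ independent set is forced to be evenly spaced and that the evenly spaced sets are exactly the residue classes mod $\ell+1$, so that the missing faces are pairwise disjoint and the complex genuinely splits as a join. After that, the homology and Leray statements follow immediately from Theorem~\ref{thm:kunneth} and the definition of the Leray number.
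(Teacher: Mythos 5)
Your proposal is correct and takes essentially the same route as the paper: both identify the independent sets of size $n$ in $G_{k,n}$ as $\frac{k}{2}+1$ pairwise disjoint sets, conclude that $I_n(G_{k,n})$ is the join of that many copies of the boundary of an $(n-1)$-simplex, and read off the homology and the Leray bound from Theorem~\ref{thm:kunneth}. The only difference is that you supply, via the gap-counting argument on the cycle, the verification that the paper dismisses as ``easy to check.''
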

\begin{proof}

Let $t=\frac{k}{2}+1$. It is easy to check that there are precisely $t$ independent sets of size $n$ in $G_{k,n}$, and they are pairswise disjoint. Therefore, $I_n(G_{k,n})$ can be described as the join of $t$ disjoint copies of the boundary of an $(n-1)$-dimensional simplex. Since the boundary of an $(n-1)$-dimensional simplex is an $(n-2)$-dimensional sphere, we obtain
by Theorem \ref{thm:kunneth}:
	\[
	\tilde{H}_i(I_n(G_{k,n}))= \begin{cases}
	\Rea & \text{ if } i=t(n-1)-1,\\
	0 & \text{ otherwise.}
	\end{cases}
	\]
Thus, $L(I_n(G)) \geq t(n-1)= \left(\frac{k}{2}+1\right)(n-1)$.

\end{proof}
	By Lemma \ref{lem:coll_leray}, we obtain
	\[
			C(I_n(G_{k,n}))\geq L(I_n(G_{k,n}))\geq \left(\frac{k}{2}+1\right)(n-1).
	\]
	On the other hand, $I_n(G_{k,n})$ is a $\left(\left(\frac{k}{2}+1\right)(n-1)-1\right)$-dimensional complex, and therefore it is $\left(\frac{k}{2}+1\right)(n-1)$-collapsible. So,
	\[
		C(I_n(G_{k,n}))=\left(\frac{k}{2}+1\right)(n-1).
	\]

Proposition~\ref{prop:extremalex} also shows that the bound in Proposition~\ref{prop:easy} is tight, since $G_{2k-2,n}$ is a $k$-partite graph with $C(I_n(G_{2k-2,n}))=k(n-1)$. 
Another such extremal example is the complete $k$-partite graph $K_{n,\ldots,n}$.
In this case, it easy to see that $I_n(K_{n,\ldots,n}) \cong I_n(G_{2k-2,n})$.

\subsection{A counterexample to Conjecture \ref{mainconj_collapsibility_version}}
Let $G=(V,E)$ be the dodecahedral graph. It will be convenient to represent $G$ as a generalized Petersen graph (see \cite{watkins1969theorem}), as follows:
\[
	V=\{a_1,\ldots,a_{10},b_1,\ldots,b_{10}\}
\]
and
\[
	E=\{\{a_i,b_i\}, \,\{a_i,a_{i+1}\},\, \{b_i,b_{i+2}\} :\,  i=1,2,\ldots,10\},
\]
where the indices are taken  modulo $10$.

Every vertex in $G$ is adjacent to exactly $3$ vertices; that is, $G$ is $3$-regular. The maximal independent sets in $G$ are the sets
	\[
	I_i= \{a_i,a_{i+2},a_{i+5},a_{i+7},b_{i-2},b_{i-1},b_{i+3},b_{i+4}\}
	\]
for $i=1,\ldots,5$ (also here, the indices are to be taken modulo $10$). In particular, $\alpha(G)=8$.

\begin{proposition}\label{claim:10,2}
	Let $G=(V,E)$ be the dodecahedral graph. Then,
	\[
	\tilde{H}_i(I_8(G))= \begin{cases}
	\Rea^4 & \text{ if } i=15,\\
	0 & \text{ otherwise.}
	\end{cases}
	\]
	In particular, $L(I_8(G))\geq 16$.
\end{proposition}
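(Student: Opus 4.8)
The plan is to compute the reduced homology of $I_8(G)$ by identifying its maximal faces, passing to the Alexander dual, and applying the Nerve Theorem. Recall that $\alpha(G)=8$, so $I_8(G)=\{U\subset V:\, \alpha(G[U])<8\}$, and its missing faces are exactly the minimal subsets containing an independent set of size $8$. Since the five sets $I_1,\ldots,I_5$ are the maximal independent sets in $G$, and each has size exactly $\alpha(G)=8$, these are precisely the missing faces of $I_8(G)$ of minimum relevance; more carefully, the independent sets of size $8$ are exactly $I_1,\ldots,I_5$, so the missing faces of $I_8(G)$ are exactly these five sets. By the discussion preceding Theorem~\ref{thm:alexander}, the maximal faces of the Alexander dual $D(I_8(G))$ are the complements $V\setminus I_j$, each of size $20-8=12$, for $j=1,\ldots,5$.

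First I would set $Y=D(I_8(G))$ and use Alexander duality (Theorem~\ref{thm:alexander}) with $|V|=20$ to translate the target statement: we want $\tilde{H}_i(I_8(G))\cong\tilde{H}_{20-i-3}(Y)=\tilde{H}_{17-i}(Y)$, so $\tilde{H}_{15}(I_8(G))\cong\tilde{H}_{2}(Y)$, and the claim becomes $\tilde{H}_2(Y)\cong\Rea^4$ with all other reduced homology vanishing. Next I would apply the Nerve Theorem (Theorem~\ref{thm:nerve}) to $Y$, whose maximal faces are $\tau_j=V\setminus I_j$ for $j\in[5]$. This reduces the computation to the homology of the nerve $N(\{\tau_1,\ldots,\tau_5\})$, a simplicial complex on the vertex set $[5]$ recording which subcollections of the $\tau_j$ have nonempty common intersection. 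Since $\bigcap_{j\in J}\tau_j=V\setminus\bigcup_{j\in J} I_j$, a subset $J\subset[5]$ is a face of the nerve if and only if $\bigcup_{j\in J} I_j\neq V$.

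The key combinatorial step is therefore to determine, for each $J\subset[5]$, whether $\bigcup_{j\in J} I_j=V$. Using the explicit description $I_i=\{a_i,a_{i+2},a_{i+5},a_{i+7},b_{i-2},b_{i-1},b_{i+3},b_{i+4}\}$, I would compute the coverage of the $20$ vertices by unions of the $I_j$. I expect that no single $I_j$ and no pair covers $V$ (so all singletons and pairs are faces), and that the relevant question is exactly which triples, quadruples, and the full quintuple cover $V$. Working out the symmetry (the indices cycle under $i\mapsto i+2\bmod 10$, which permutes $I_1,\ldots,I_5$ cyclically) should let me classify the covering subsets up to rotation, and hence identify the nerve as a specific complex on $5$ vertices. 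The target $\tilde{H}_2(N)\cong\Rea^4$ strongly suggests that the nerve is (homotopy equivalent to) a wedge of four $2$-spheres — for instance, the full boundary complex would give a single sphere, so I anticipate that exactly the five $4$-element subsets fail to cover while the full set does cover (or a closely related pattern), producing the boundary of the $4$-simplex with the top face but not all facets, which by a short Euler-characteristic and connectivity check yields $\tilde{H}_2=\Rea^4$.

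The main obstacle will be the covering computation: I must verify precisely which unions $\bigcup_{j\in J} I_j$ equal $V$, and this requires careful bookkeeping of the $a$- and $b$-vertices modulo $10$ rather than any deep idea. Once the face structure of the nerve $N=N(\{\tau_1,\ldots,\tau_5\})$ is pinned down, computing $\tilde{H}_*(N)$ is routine — either directly from the chain complex or by recognizing $N$ as a standard complex — and the conclusion $\tilde{H}_2(N)\cong\Rea^4$ with all other reduced homology vanishing then gives, via Theorem~\ref{thm:nerve} and Theorem~\ref{thm:alexander}, the desired computation $\tilde{H}_{15}(I_8(G))\cong\Rea^4$ and vanishing elsewhere. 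Finally, by the Remark following Theorem~\ref{chordal collapse} (applied via $L(X)\ge$ the top nonvanishing homological dimension plus one, since $I_8(G)$ is itself an induced subcomplex of itself), nonvanishing of $\tilde{H}_{15}(I_8(G))$ forces $L(I_8(G))\geq 16$.
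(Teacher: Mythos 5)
Your overall strategy is exactly the paper's: observe that the independent sets of size $8$ are precisely $I_1,\ldots,I_5$, so the maximal faces of $D(I_8(G))$ are the complements $V\setminus I_j$; apply the Nerve Theorem (Theorem~\ref{thm:nerve}) to get $\tilde{H}_i(N(\mathcal{F}))\cong\tilde{H}_i(D(I_8(G)))$, Alexander duality (Theorem~\ref{thm:alexander}) to shift to $\tilde{H}_{17-i}(I_8(G))$, and reduce everything to deciding which unions $\bigcup_{j\in J}I_j$ equal $V$. Up to that point your reductions are correct, as is the final step from $\tilde{H}_{15}(I_8(G))\neq 0$ to $L(I_8(G))\geq 16$.

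However, there is a genuine gap at the decisive step, and your one concrete prediction for its outcome is wrong --- in fact inconsistent with the homology you are trying to prove. You anticipate that ``exactly the five $4$-element subsets fail to cover while the full set does cover,'' i.e.\ that the nerve contains all $3$-dimensional faces but not the top face. That complex is the boundary of the $4$-simplex, a $3$-sphere, with $\tilde{H}_3\cong\Rea$ and $\tilde{H}_2=0$ --- not $\tilde{H}_2\cong\Rea^4$. The correct covering pattern is the reverse: \emph{no} triple covers $V$, while \emph{every} quadruple does. One clean way to see this, which your ``careful bookkeeping'' would need to produce: every vertex of $G$ lies in exactly two of the sets $I_1,\ldots,I_5$ (e.g.\ $a_1\in I_1\cap I_4$, $b_1\in I_2\cap I_3$), and all ten pairs $\{i,j\}\in\binom{[5]}{2}$ arise as such ``labels'' (the $a$-vertices realize the five pairs $\{1,3\},\{1,4\},\{2,4\},\{2,5\},\{3,5\}$ and the $b$-vertices the five pairs $\{1,2\},\{2,3\},\{3,4\},\{4,5\},\{1,5\}$, each twice). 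Hence any $J$ with $|J|=3$ misses both vertices labelled by the complementary pair, so $J$ is a face of the nerve, while any $J$ with $|J|=4$ covers $V$, since no label can be disjoint from $J$. Thus the nerve is the complete $2$-dimensional complex on $5$ vertices (the $2$-skeleton of $\Delta^4$), whose only nonvanishing reduced homology is $\tilde{H}_2\cong\Rea^{\binom{4}{3}}=\Rea^4$ --- which is exactly what the paper asserts. With this computation supplied, the rest of your argument goes through verbatim; without it, and with the guessed pattern as stated, the proof does not yield the claimed answer.
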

\begin{proof}

Let $\mathcal{F}=\{V\setminus I_1, V\setminus I_2,\ldots, V\setminus I_5\}$.  The family $\mathcal{F}$ is the set of maximal faces of $D(I_8(G))$. So, by the Nerve Theorem (Theorem \ref{thm:nerve}),
\[
	\tilde{H}_i(N(\mathcal{F})) \cong \tilde{H}_i(D(I_8(G)))
\]
for all $i\geq -1$. So, by Alexander duality (Theorem \ref{thm:alexander}), 
\begin{equation}\label{eq:dual}
	\tilde{H}_i(N(\mathcal{F}))=\tilde{H}_{|V|-i-3}(I_8(G))=\tilde{H}_{17-i}(I_8(G))
\end{equation}
for all $-1\leq i\leq |V|-2 = 18$.
We have
\[
	N(\mathcal{F})= \left\{ A\subset [5]: \,  \bigcap_{i\in A} V\setminus I_i \neq \emptyset\right\} = \left\{ A\subset[5] : \, \bigcup_{i\in A} I_i \neq V\right\}.
\]
It is easy to check that $N(\mathcal{F})$ is the complete $2$-dimensional complex on $5$ vertices. So,
\[
    \tilde{H}_i(N(\mathcal{F}))=\begin{cases}
    \Rea^4 & \text{ if } i=2,\\
    0 & \text{ otherwise}.
    \end{cases}
\]
Thus, by \eqref{eq:dual},
\[
\tilde{H}_i(I_8(G))= \begin{cases}
\Rea^4 & \text{ if } i=15,\\
0 & \text{ otherwise,}
\end{cases}
\]
as wanted.
\end{proof}

We obtain $C(I_8(G))\geq L(I_8(G))\geq 16> 2\cdot (8-1)= 14$. Therefore, $I_8(G)$ does not satisfy the bound in Conjecture \ref{mainconj_collapsibility_version}.
However, this is not a counterexample for Conjecture~\ref{mainconj}.
Indeed, it is not hard to check that $f_G(8) = 11$.

\subsection{Leray number of the disjoint union of graphs}

The following result will help us in constructing more counterexamples to Conjecture \ref{mainconj_collapsibility_version}:

\begin{theorem}
\label{prop:union}
Let $G$ be the disjoint union of the graphs $G_1,\ldots,G_m$.
For $i\in[m]$, let $t_i= \alpha(G_i)$ and let $\ell_i=L(I_{t_i}(G_i))$. Let $t=\sum_{i=1}^m t_i = \alpha(G)$ and $\ell=L(I_t(G))$.
Then,
\[
    \ell= \sum_{i=1}^m \ell_i + m -1.
\]
\end{theorem}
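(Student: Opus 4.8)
The plan is to induct on $m$, with all the work in the two-block case. Split $G = G_1 \sqcup G'$ where $G' = G_2 \sqcup \cdots \sqcup G_m$, and write $V_1 = V(G_1)$, $V' = V(G')$, $t' = \alpha(G') = \sum_{i\ge 2} t_i$, $X = I_t(G)$, $X_1 = I_{t_1}(G_1)$, $X' = I_{t'}(G')$. The first step is the elementary observation that the independence number is additive over disjoint unions: for $U = U_1 \cup U'$ with $U_1 \subseteq V_1$ and $U' \subseteq V'$ we have $\alpha(G[U]) = \alpha(G_1[U_1]) + \alpha(G'[U'])$, and since the summands are bounded by $t_1$ and $t'$, the inequality $\alpha(G[U]) < t_1 + t'$ holds if and only if $\alpha(G_1[U_1]) < t_1$ or $\alpha(G'[U']) < t'$. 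This yields the decomposition
\[
X = A \cup B, \qquad A = X_1 \ast 2^{V'}, \qquad B = 2^{V_1} \ast X',
\]
and a face lies in both pieces exactly when each block lies in the corresponding complex, so $A \cap B = X_1 \ast X'$. The same identities survive restriction to any $W = W_1 \cup W'$: one has $X[W] = (X_1[W_1] \ast 2^{W'}) \cup (2^{W_1} \ast X'[W'])$ with intersection $X_1[W_1] \ast X'[W']$.

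Second, I would feed this into the Mayer--Vietoris sequence. Whenever $W_1 \neq \emptyset$ and $W' \neq \emptyset$, both $X_1[W_1] \ast 2^{W'}$ and $2^{W_1} \ast X'[W']$ are cones (over any vertex of $W'$, respectively $W_1$), hence contractible, so the sequence collapses to isomorphisms
\[
\tilde H_i(X[W]) \cong \tilde H_{i-1}\big(X_1[W_1] \ast X'[W']\big)
\]
for all $i$. Combining with the K\"unneth formula (Theorem \ref{thm:kunneth}) gives
\[
\tilde H_i(X[W]) \cong \bigoplus_{a + b = i - 2} \tilde H_a(X_1[W_1]) \otimes \tilde H_b(X'[W']).
\]
If instead $W_1 = \emptyset$ or $W' = \emptyset$, then $X[W]$ is a full simplex and carries no reduced homology, so such $W$ are irrelevant to the Leray number.

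Third, I would extract the Leray number. Writing $M(Y) = \max\{i : \tilde H_i(Y[U]) \neq 0 \text{ for some } U\}$, the definition of $L$ gives $L(Y) = M(Y) + 1$. The displayed isomorphism shows that the top nonvanishing reduced homology over induced subcomplexes of $X$ is realized by choosing $W_1$ and $W'$ attaining $M(X_1) = \ell_1 - 1$ and $M(X') = \ell' - 1$ (with $\ell' = L(X')$), and that no larger dimension occurs; hence $M(X) = M(X_1) + M(X') + 2$, that is, $\ell = \ell_1 + \ell' + 1$. Induction on $m$ (the base case $m=1$ being trivial) then combines with $\ell' = \sum_{i\ge 2}\ell_i + (m-1) - 1$ to give $\ell = \sum_{i=1}^m \ell_i + m - 1$.

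I expect the main obstacle to be the careful passage from the homology computation to the exact Leray-number identity. One must check that the maxima $M(X_1)$ and $M(X')$ are \emph{simultaneously} realizable by induced subcomplexes on nonempty vertex blocks, and one must track the reduced-homology conventions in dimension $-1$ (which genuinely occur, for instance when some $t_i = 1$ and $I_{t_i}(G_i) = \{\emptyset\} = S^{-1}$), so that the two $-1$ shifts coming from K\"unneth and the $+1$ shift from each Mayer--Vietoris step accumulate to precisely $+(m-1)$ over the $m-1$ inductive steps. By contrast, the union decomposition and the contractibility of the two cones are routine.
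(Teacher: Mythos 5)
Your argument is correct, and it takes a genuinely different route from the paper. The paper works on the Alexander-dual side: since $t_i=\alpha(G_i)$ exactly, $U\notin I_t(G)$ iff $U\cap V_i\notin I_{t_i}(G_i)$ for all $i$, whence $D(I_t(G))=D(I_{t_1}(G_1))\ast\cdots\ast D(I_{t_m}(G_m))$; it applies K\"unneth to this join of duals and translates back via Theorem \ref{thm:alexander}, the two dualizations producing the index shift $k-2m+2$ in Proposition \ref{prop:union_homology}, and then runs the same two-directional extraction of $\ell$ that you do. You instead cover $X$ by the two cones $X_1\ast 2^{V'}$ and $2^{V_1}\ast X'$ (using the exactness of $t_i=\alpha(G_i)$ at the same spot, to see that $\alpha(G[U])<t$ iff one block falls below its threshold) and run reduced Mayer--Vietoris plus K\"unneth on the intersection $X_1\ast X'$, the shift $+2$ per step accumulating over the induction on $m$. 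Your version buys an isomorphism that holds uniformly for \emph{every} induced subcomplex $X[W]$ with both blocks nonempty, so you avoid the paper's repeated verification that $\alpha$ of the relevant induced subgraphs equals $t_i$; the paper's version buys a one-shot closed formula for all $m$ and reuses machinery (Alexander duality, K\"unneth) already deployed for Proposition \ref{claim:10,2}. Note also that your Mayer--Vietoris step is legitimate even when the intersection realizes to the empty space, provided you run it at the level of augmented simplicial chains (both cover pieces contain $\emptyset$).

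The loose end you flag closes exactly as you suspect, and it is worth recording the one fact needed: since $t_i=\alpha(G_i)$, any maximum independent set $A$ of $G_i$ is a missing face of $X_i$, so $X_i[A]$ is the boundary of a $(t_i-1)$-dimensional simplex and $\ell_i\geq t_i-1$. Hence either $\ell_i\geq 1$, in which case any $W_i$ with $\tilde H_{\ell_i-1}(X_i[W_i])\neq 0$ is automatically nonempty (a complex with nonvanishing homology in dimension $\geq 0$ has a vertex), or $\ell_i=0$, which forces $t_i=1$ and $X_i=\{\emptyset\}$, and then any singleton $W_i=\{v\}$ with $v\in V_i$ works, since $\{v\}\notin I_1(G_i)$ gives $X_i[W_i]=\{\emptyset\}$ and $\tilde H_{-1}\neq 0$. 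So $M(X_1)$ and $M(X')$ are always simultaneously realizable on nonempty blocks, the tensor product of nonzero $\mathbb{R}$-vector spaces is nonzero (coefficients in a field matter here), and the upper bound on $M(X)$ needs only $M(\cdot)\geq -1$, which holds via $U=\emptyset$. Amusingly, the paper quietly faces the same edge case: its step ``$I_{t_i}(G_i')$ is not the complete complex, since it has non-trivial homology'' fails literally for $\{\emptyset\}=2^{\emptyset}$ when $\ell_i=0$, and is repaired by the same observation.
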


The proof relies on the following result.

\begin{proposition}\label{prop:union_homology}
Let $G$ be the disjoint union of the graphs $G_1,\ldots,G_m$.
For $i\in[m]$, let $t_i= \alpha(G_i)$. Let $t=\sum_{i=1}^m t_i = \alpha(G)$. Then,
$
\tilde{H}_k(I_t(G))=0
$
if and only if for every choice of integers $k_1,\ldots, k_m$ satisfying $\sum_{i=1}^m k_i = k-2m+2$, 
$
\tilde{H}_{k_i}(I_{t_i}(G_i))=0
$
for all $i\in[m]$.
	
\end{proposition}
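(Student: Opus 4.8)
The plan is to compute $\tilde{H}_*(I_t(G))$ by realizing $I_t(G)$ as a union of acyclic subcomplexes whose only essential intersection is the join $X_1 \ast \cdots \ast X_m$, where $X_i = I_{t_i}(G_i)$, and then to feed the result into the Künneth formula for joins (Theorem \ref{thm:kunneth}). To set up the decomposition, write $V_i$ for the vertex set of $G_i$ and, for $U \subset V$, put $U_i = U \cap V_i$, so that $U = U_1 \cupdot \cdots \cupdot U_m$. Since $G$ is a disjoint union, the independence number is additive: $\alpha(G[U]) = \sum_{i=1}^m \alpha(G_i[U_i])$. As $\alpha(G_i[U_i]) \leq \alpha(G_i) = t_i$ for each $i$, the inequality $\alpha(G[U]) < t$ holds exactly when $\alpha(G_i[U_i]) < t_i$ for at least one $i$; that is, $U \in I_t(G)$ if and only if $U_i \in X_i$ for some $i$. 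Consequently
\[
I_t(G) = \bigcup_{i=1}^m Y_i, \qquad Y_i = 2^{V_1} \ast \cdots \ast 2^{V_{i-1}} \ast X_i \ast 2^{V_{i+1}} \ast \cdots \ast 2^{V_m},
\]
and for every nonempty $S \subset [m]$ the intersection $\bigcap_{i \in S} Y_i$ is obtained from $2^V = 2^{V_1} \ast \cdots \ast 2^{V_m}$ by replacing the factor $2^{V_i}$ with $X_i$ for each $i \in S$.

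The crucial observation is that whenever $S \neq [m]$ this intersection retains at least one full-simplex factor $2^{V_j}$, so it is a cone and hence acyclic; only $\bigcap_{i=1}^m Y_i = X_1 \ast \cdots \ast X_m$ can carry homology. Using this, I would prove by induction on $m$ that
\[
\tilde{H}_k(I_t(G)) \cong \tilde{H}_{k-m+1}(X_1 \ast \cdots \ast X_m)
\]
for all $k$ (the base case $m=1$ being trivial). For the inductive step, group $I_t(G) = A \cup Y_m$ with $A = Y_1 \cup \cdots \cup Y_{m-1}$. Both $A$ and $Y_m$ are cones, since each is a join with a nonempty full simplex (namely $2^{V_m}$ and $2^{V_1} \ast \cdots \ast 2^{V_{m-1}}$ respectively), hence contractible, so the Mayer--Vietoris sequence collapses to $\tilde{H}_k(I_t(G)) \cong \tilde{H}_{k-1}(A \cap Y_m)$. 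One then checks that $A \cap Y_m = X_m \ast I'$, where $I' = I_{t_1 + \cdots + t_{m-1}}(G_1 \cupdot \cdots \cupdot G_{m-1})$; applying the induction hypothesis to $I'$ together with the two-factor case of Theorem \ref{thm:kunneth} yields the displayed degree shift. The bookkeeping of this shift (equivalently, the fact that $I_t(G)$ is homotopy equivalent to the $(m-1)$-fold suspension of $X_1 \ast \cdots \ast X_m$) is the step I expect to require the most care, together with the verification that all proper intersections are genuinely acyclic.

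Finally, I would substitute the isomorphism into Theorem \ref{thm:kunneth}: with $i = k-m+1$ and $m$ join factors, the index constraint $\sum_j k_j = i - m + 1$ becomes $\sum_{j=1}^m k_j = k - 2m + 2$, giving
\[
\tilde{H}_k(I_t(G)) \cong \bigoplus_{k_1 + \cdots + k_m = k - 2m + 2} \tilde{H}_{k_1}(X_1) \otimes \cdots \otimes \tilde{H}_{k_m}(X_m).
\]
Since the coefficients lie in the field $\Rea$, this direct sum is zero if and only if each summand is zero, and a tensor product of vector spaces over a field vanishes if and only if one of its factors vanishes. Hence $\tilde{H}_k(I_t(G)) = 0$ precisely when, for every tuple $(k_1, \ldots, k_m)$ with $\sum_{i=1}^m k_i = k - 2m + 2$, at least one of the groups $\tilde{H}_{k_i}(I_{t_i}(G_i))$ is trivial, which is the desired vanishing criterion.
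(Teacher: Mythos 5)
Your proposal is correct, but it takes a genuinely different route from the paper. Writing $X_i=I_{t_i}(G_i)$, the paper works entirely on the dual side: it observes that $U\notin I_t(G)$ if and only if $U\cap V_i\notin X_i$ for all $i$, so that $D(I_t(G))=D(X_1)\ast\cdots\ast D(X_m)$, applies the K\"unneth formula (Theorem \ref{thm:kunneth}) to this join of duals, and translates back through Alexander duality (Theorem \ref{thm:alexander}) via the substitutions $k=N-j-3$, $k_i=N_i-j_i-3$, arriving at exactly the direct-sum formula you end with, $\tilde{H}_k(I_t(G))\cong\bigoplus_{k_1+\cdots+k_m=k-2m+2}\tilde{H}_{k_1}(X_1)\otimes\cdots\otimes\tilde{H}_{k_m}(X_m)$. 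You instead stay on the primal side, and your steps check out: the covering $I_t(G)=\bigcup_i Y_i$ is valid because $\alpha(G[U])=\sum_i\alpha(G_i[U\cap V_i])$ with each summand at most $t_i$; the union $A=Y_1\cup\cdots\cup Y_{m-1}$ is indeed a cone since it equals $I_{t_1+\cdots+t_{m-1}}(G_1\cup\cdots\cup G_{m-1})\ast 2^{V_m}$; the identification $A\cap Y_m=X_m\ast I'$ holds; and the index bookkeeping works (the Mayer--Vietoris/suspension shift of $m-1$ plus the K\"unneth shift of $m-1$ gives $k-2m+2$). The duality argument buys brevity --- the join structure of the dual is an immediate set-theoretic identity requiring no acyclicity checks --- while your argument buys a more elementary, self-contained proof and a stronger structural fact (that $I_t(G)$ has the homology of an $(m-1)$-fold suspension of $X_1\ast\cdots\ast X_m$); it does implicitly require each $V_i\neq\emptyset$ so that the cone apexes exist, a nondegeneracy also implicit in the paper's use of Alexander duality. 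One further point in your favor: your closing criterion --- for every admissible tuple, \emph{at least one} factor $\tilde{H}_{k_i}(X_i)$ vanishes --- is the correct reading of the tensor-product condition over a field; the proposition's literal phrase ``for all $i\in[m]$'' is a slip (with unrestricted integer tuples it would force every $X_i$ to be acyclic in all dimensions), and your version is the one actually used later in the proof of Theorem \ref{prop:union}.
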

\begin{proof}
For all $i\in[m]$, let $V_i$ be the vertex set of $G_i$, and let $V=\bigcup_{i=1}^m V_i$ be the vertex set of $G$. Let $N_i=|V_i|$ for all $i\in[m]$, and $N=|V|=\sum_{i=1}^m N_i$.

A set $U\subset V$ contains an independent set of size $t$ in $G$ if and only if $U\cap V_i$ contains an independent set of size $t_i$ in $G_i$ for all $i\in[m]$. That is, $U\notin I_t(G)$ if and only if $U\cap V_i\notin I_{t_i}(G_i)$ for all $i\in[m]$.
Equivalently, a set $W\subset V$ belongs to $D(I_t(G))$ if and only if $W\cap V_i\in D(I_{t_i}(G_i))$ for all $i\in[m]$. Thus, we have
\[
	D(I_t(G))= D(I_{t_1}(G_1))\ast \cdots \ast D(I_{t_m}(G_m)).
\]
Note that for every $i\in [m]$, $V_i\notin I_{t_i}(G_i)$ (since $G_i$ contains an independent set of size $t_i=\alpha(G_i)$). Similarly, $V\notin I_t(G)$. So, by Alexander duality (Theorem \ref{thm:alexander}), we have
\[
\tilde{H}_{j}(D(I_{t_i}(G_i)))= \tilde{H}_{N_i-j-3}(I_{t_i}(G_i))
\]
for all $i\in[m]$ and $-1\leq j\leq |V_i|-2$, and
\[
\tilde{H}_{j}(D(I_{t}(G)))= \tilde{H}_{N-j-3}(I_{t}(G))
\]
for all $-1\leq j\leq |V|-2$.

Therefore, by Theorem \ref{thm:kunneth}, we obtain
\begin{align*}
&\tilde{H}_{N-j-3}(I_{t}(G))
=\tilde{H}_{j}(D(I_{t}(G)))
\\
&=
\bigoplus_{j_1+\cdots+j_m=j-m+1} \tilde{H}_{j_1}(D(I_{t_1}(G_1)))\otimes \cdots \otimes \tilde{H}_{j_m}(D(I_{t_m}(G_m)))
\\
&=
\bigoplus_{j_1+\cdots+j_m=j-m+1} \tilde{H}_{N_1-j_1-3}(I_{t_1}(G_1))\otimes \cdots \otimes \tilde{H}_{N_m-j_m-3}(I_{t_m}(G_m)).
\end{align*}
Setting $k=N-j-3$ and $k_i=N_i-j_i-3$ for all $i\in[m]$, we obtain
\[
\tilde{H}_{k}(I_{t}(G))
=
\bigoplus_{k_1+\cdots+k_m=k-2m+2} \tilde{H}_{k_1}(I_{t_1}(G_1))\otimes \cdots \otimes \tilde{H}_{k_m}(I_{t_m}(G_m)).
\]
In particular, $
\tilde{H}_k(I_t(G))=0
$
if and only if for every choice of $k_1,\ldots, k_m$ satisfying $\sum_{i=1}^m k_i = k-2m+2$, 
$
\tilde{H}_{k_i}(I_{t_i}(G_i))=0
$
for all $i\in[m]$.

\end{proof}

\begin{proof}[Proof of Theorem \ref{prop:union}]
	
For all $i\in[m]$, let $V_i$ be the vertex set of $G_i$, and let $V=\bigcup_{i=1}^m V_i$ be the vertex set of $G$.
	
Since $L(I_t(G))=\ell$, there exists a subset $U\subset V$ such that
\[
	\tilde{H}_{\ell-1}(I_t(G[U]))\neq 0.	
\]
Let $G'=G[U]$ and $G'_i=G_i[U\cap V_i]$ for all $i\in[m]$. Note that $I_t(G')$ is not the complete complex, since it has non-trivial homology; hence, $\alpha(G')=t$. Since $G'$ is the disjoint union of the graphs $G'_1,\ldots, G'_m$, we must have $\alpha(G'_i)=t_i$ for all $i\in[m]$. By Proposition \ref{prop:union_homology}, there exists $k_1,\ldots, k_m$ satisfying $\sum_{i=1}^m k_i = \ell-2m+1$ such that
\[
	\tilde{H}_{k_i}(I_{t_i}(G'_i))\neq 0.
\]
In particular, $\ell_i=L(I_{t_i}(G_i))\geq k_i+1$. Summing over all $i\in[m]$, we obtain
\[
	\sum_{i=1}^m \ell_i \geq \sum_{i=1}^m k_i + m = \ell-m+1.
\]

Now, let $i\in [m]$. Since $\ell_i=L(I_{t_i}(G_i))$, there exists a subset $U_i\subset V_i$ such that
\[
	\tilde{H}_{\ell_i-1}(I_{t_i}(G_i[U_i]))\neq 0.
\]
Let $G'_i=G_i[U_i]$. Note that $I_{t_i}(G'_i)$ is not the complete complex, since it has non-trivial homology. Therefore, $\alpha(G'_i)=t_i$.
Let $U=U_1\cup\cdots\cup U_m$, and let $G'=G[U]$. Then, $G'$ is the disjoint union of $G'_1,\ldots,G'_m$. By Proposition \ref{prop:union_homology}, we have
\[
	\tilde{H}_{\sum_{i=1}^m (\ell_i-1)+2m-2}(I_t(G'))=\tilde{H}_{\sum_{i=1}^m \ell_i+m-2}(I_t(G'))\neq 0.
\]
Thus, $\ell=L(I_t(G))\geq \sum_{i=1}^m \ell_i +m -1$.

\end{proof}

\begin{corollary}\label{cor:dodecahedral}
	Let $G_k$ be the union of $k$ disjoint copies of the dodecahedral graph. Then,
\[
L(I_{8k}(G_k))\geq 17k-1.
\]
\end{corollary}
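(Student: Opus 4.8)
The plan is to obtain this corollary as an immediate consequence of the two main results of this subsection, namely Theorem \ref{prop:union} and Proposition \ref{claim:10,2}, with essentially no additional work. Let $G$ denote the dodecahedral graph. As recorded just before Proposition \ref{claim:10,2}, we have $\alpha(G)=8$. Since $G_k$ is the disjoint union of $k$ copies of $G$, its independence number is $\alpha(G_k)=\sum_{i=1}^k \alpha(G)=8k$, so that $I_{8k}(G_k)=I_t(G_k)$ with $t=\alpha(G_k)$, which is exactly the regime in which Theorem \ref{prop:union} applies.

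Concretely, I would apply Theorem \ref{prop:union} with $m=k$, with each constituent graph $G_i$ equal to $G$, and with $t_i=\alpha(G_i)=8$ and $\ell_i=L(I_{t_i}(G_i))=L(I_8(G))$. The theorem then yields the exact identity
\[
L(I_{8k}(G_k)) = \sum_{i=1}^k L(I_8(G)) + k - 1 = k\cdot L(I_8(G)) + k - 1.
\]
At this point the only remaining ingredient is a lower bound on $L(I_8(G))$. This is supplied directly by Proposition \ref{claim:10,2}: since $\tilde{H}_{15}(I_8(G))\neq 0$ and $I_8(G)$ is an induced subcomplex of itself, $I_8(G)$ fails to be $15$-Leray, whence $L(I_8(G))\geq 16$.

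Substituting this bound into the displayed identity gives
\[
L(I_{8k}(G_k)) \geq 16k + k - 1 = 17k - 1,
\]
which is the desired inequality. I do not anticipate any real obstacle here: the genuine content lives in Theorem \ref{prop:union} (the Leray number of a disjoint union behaves additively up to the $+m-1$ correction term) and in the homology computation of Proposition \ref{claim:10,2}, both already established. The only points to state carefully are that the index $t$ in Theorem \ref{prop:union} is forced to equal $\alpha(G_k)=8k$, so that $I_t(G_k)=I_{8k}(G_k)$, and that we are free to replace the exact value $\ell_i=L(I_8(G))$ by its lower bound $16$ since the theorem's formula is monotone in each $\ell_i$.
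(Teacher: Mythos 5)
Your proposal is correct and follows exactly the paper's own argument: apply Theorem \ref{prop:union} with $m=k$ copies of the dodecahedral graph (each with $t_i=\alpha(G_i)=8$) to get $L(I_{8k}(G_k))=\sum_{i=1}^k L(I_8(G_i))+k-1$, then invoke Proposition \ref{claim:10,2} for $L(I_8(G_i))\geq 16$, yielding $17k-1$. Your added remarks (that $t=\alpha(G_k)=8k$ is forced and that the exact identity is monotone in each $\ell_i$) are correct bookkeeping that the paper leaves implicit.
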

\begin{proof}
Let $H_1,\ldots, H_k$ be $k$ disjoint copies of the dodecahedral graph. Then, by Propositions \ref{claim:10,2} and \ref{prop:union}, we obtain
\begin{multline*}
	L(I_{8k}(G_k))=L(I_{8k}(H_1\cup H_2\cup \cdots \cup H_k))
	\\= \sum_{i=1}^k L(I_8(H_i))+k-1 \geq 16k+k-1 =17k-1.
\end{multline*}
\end{proof}

Note that the graphs $G_k$ are $3$-regular, and
\[
\frac{L(I_{8k}(G_k))}{8k-1} \geq \frac{17k-1}{8k-1}>  2\frac{1}{8}> 2.
\]
Thus, the complexes $I_{8k}(G_k)$ do not satisfy the bound in Conjecture \ref{mainconj_collapsibility_version}.

Note that the graphs $G_k$ are \emph{not} counterexamples for Conjecture~\ref{mainconj}.
This can be shown by the following observation.
\begin{proposition}\label{prop:disjoint_union_f}
	Let $G$ be the disjoint union of two graphs $G_1$ and $G_2$ with $\alpha(G_1)=t_1$ and $\alpha(G_2)=t_2$. Then,
	\[
		f_G(t_1+t_2)\leq \max \{ f_{G_1}(t_1), f_{G_2}(t_2)+t_1\}.
	\]
\end{proposition}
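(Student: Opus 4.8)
The plan is to prove the bound directly from the definition: I would show that any collection of $M := \max\{f_{G_1}(t_1),\, f_{G_2}(t_2)+t_1\}$ independent sets of size $n := t_1+t_2$ in $G$ admits a rainbow independent set of size $n$, which gives $f_G(n)\le M$. Write $V_1,V_2$ for the vertex sets of $G_1,G_2$. The crucial first observation is a rigidity fact: since there are no edges between $V_1$ and $V_2$, any independent set $J$ in $G$ splits as $J=(J\cap V_1)\cup(J\cap V_2)$ with $J\cap V_i$ independent in $G_i$, so $|J\cap V_1|\le t_1$ and $|J\cap V_2|\le t_2$. If moreover $|J|=n=t_1+t_2$, these two inequalities are forced to be equalities. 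Hence every size-$n$ independent set of $G$ is the disjoint union of a maximum independent set of $G_1$ and a maximum independent set of $G_2$.

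Given the sets $J_1,\ldots,J_M$, I would therefore set $A_i=J_i\cap V_1$ and $B_i=J_i\cap V_2$, so that each $A_i$ is an independent set of size $t_1$ in $G_1$ and each $B_i$ is an independent set of size $t_2$ in $G_2$. Since $M\ge f_{G_1}(t_1)$, the family $\{A_1,\ldots,A_M\}$ has a rainbow independent set of size $t_1$ in $G_1$; that is, there are indices $i_1<\cdots<i_{t_1}$ and elements $a_{i_\ell}\in A_{i_\ell}$ whose union is independent of size $t_1$ in $G_1$. This consumes exactly $t_1$ of the $M$ indices, leaving $M-t_1\ge f_{G_2}(t_2)$ indices available. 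Applying the definition of $f_{G_2}(t_2)$ to the corresponding sets $\{B_j:\, j\in[M]\setminus\{i_1,\ldots,i_{t_1}\}\}$ yields a rainbow independent set of size $t_2$ in $G_2$, supported on indices disjoint from the $i_\ell$. Taking the union of the two rainbow sets produces $n$ elements, one from each of $n$ distinct sets $J_j$; it is independent in $G$ because each half is independent in its own component and no edges cross between $V_1$ and $V_2$, and its elements are distinct since $V_1\cap V_2=\emptyset$. This is the desired rainbow independent set of size $n$.

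The argument is essentially careful index bookkeeping rather than a deep obstacle, and the only genuinely load-bearing point is why the second application of $f$ is legitimate: after extracting the $G_1$-rainbow set we must still retain enough sets to invoke $f_{G_2}(t_2)$. This is exactly where the additive term $t_1$ in the bound enters — the $t_1$ indices spent on the $G_1$-part must be subtracted, so we need $M-t_1\ge f_{G_2}(t_2)$, i.e.\ $M\ge f_{G_2}(t_2)+t_1$. I would take care to state the rigidity observation cleanly (the forced equalities $|J\cap V_i|=t_i$), since it is what guarantees that every size-$n$ independent set genuinely decomposes into maximum independent sets of the two components, and hence that the $A_i$ and $B_i$ have exactly the prescribed sizes $t_1$ and $t_2$ needed to feed into $f_{G_1}$ and $f_{G_2}$ respectively.
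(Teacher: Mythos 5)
Your proof is correct and takes essentially the same approach as the paper: both rest on the rigidity observation that every independent set of size $t_1+t_2=\alpha(G)$ must meet $V_1$ in exactly $t_1$ vertices and $V_2$ in exactly $t_2$ vertices, then apply $f_{G_1}(t_1)$ to the projections onto $V_1$ and $f_{G_2}(t_2)$ to the projections of the remaining $M-t_1$ sets onto $V_2$, and take the union of the two rainbow sets. The only cosmetic difference is bookkeeping: the paper relabels so that the $G_1$-rainbow set occupies the last $t_1$ indices, whereas you carry arbitrary index sets through the argument.
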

\begin{proof}
Let $V_1$ and $V_2$ denote the vertex sets of $G_1$ and $G_2$ respectively.
Let $t= \max \{ f_{G_1}(t_1), f_{G_2}(t_2)+t_1\}$.

Let $\mathcal{A}=\{A_1,\ldots, A_t\}$ be a family of independent sets of size $t_1+t_2$ in $G$.
Note that any independent set of size $t_1+t_2=\alpha(G)$ in $G$ has $t_1$ vertices in $V_1$ and $t_2$ vertices in $V_2$.

Thus, $A_1\cap V_1, A_2\cap V_1,\ldots, A_t\cap V_1$ is a family of  $t\geq f_{G_1}(t_1)$ independent sets of size $t_1$ in $G_1$. Hence, it contains a rainbow independent set $R_1$ of size $t_1$. Without loss of generality, we may assume that $R_1= \{a_{t-t_1+1},\ldots,a_{t}\}$, where $a_i \in A_i$ for all $i\in\{t-t_1+1,\ldots,t\}$.

The family $A_1\cap V_2, A_2\cap V_2,\ldots,A_{t-t_1}\cap V_2$ is a family of  $t-t_1\geq f_{G_2}(t_2)$ independent sets of size $t_2$ in $G_2$; therefore, it contains a rainbow independent set $R_2$ of size $t_2$.

Then, the set $R_1\cup R_2$ is a rainbow independent set of size $t_1+t_2$ in $G$ with respect to $\mathcal{A}$, as wanted.
\end{proof}

Applying Proposition \ref{prop:disjoint_union_f} repeatedly, we obtain that $f_{G_k}(8k) \leq 8k+3 < 16k-1$.

\section{Open problems}\label{sec:open}

We showed that the bound in Conjecture \ref{mainconj_collapsibility_version} holds in some special cases, but not in general. It would be interesting to decide for which values of $\Delta$ and $n$ the inequality holds. Alternatively, one could try to characterize the graphs satisfying the bound for all values of $n$.

A weaker result, which may hold for general bounded degree graphs, is the following:
\begin{conjecture}\label{conjecture:link}
Let $G=(V,E)$ be a graph with maximum degree at most $\Delta$, and let $n\geq 1$ be an integer. Let $A$ be an independent set of size $n-1$ in $G$. Then,
\[
    C(\lk(I_n(G),A))\leq \left\lfloor\frac{(n-1)\Delta}{2}\right\rfloor.
\]
\end{conjecture}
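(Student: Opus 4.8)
The plan is to follow the proof of Proposition \ref{prop:clawfree} as closely as possible, isolate precisely where claw-freeness is used, and then try to recover the bound in the general case by exploiting the extra ``degree slack'' created by vertices of high $A$-degree. First I would reduce, exactly as in Proposition \ref{prop:clawfree}, to the case $V = A \cup N_G(A)$, since any vertex outside $A \cup N_G(A)$ extends $A$ to an independent set of size $n$ and hence does not lie in $\lk(I_n(G),A)$. For $w \in N_G(A)$ write $d_A(w) = |N_G(w) \cap A|$, and set $B = \{w : d_A(w) \ge 2\}$ and $U = N_G(A) \setminus B = \{w : d_A(w)=1\}$. The crucial counting inequality survives without claw-freeness: double counting the edges between $A$ and $N_G(A)$ gives $|U| + 2|B| \le \sum_{w} d_A(w) = \sum_{v \in A} |N_G(v)| \le (n-1)\Delta$, so that $|U| \le (n-1)\Delta - 2|B|$, now with surplus $\sum_{w \in B}(d_A(w) - 2) \ge 0$.

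Second, I would peel off $B$ using Lemma \ref{lemma:induction_step2}, applied with $B$ in the role of the order-carrying set. For each admissible partition $(B_1, B_2)$ of $B$, this reduces the problem to bounding $C(X)$ for $X = \lk(I_n(G[V \setminus B_1]), A \cup B_2)$, a complex whose vertex set is contained in $U$, against the target $\lfloor (n-1)\Delta/2 \rfloor - |B_2|$. In the claw-free setting Lemma \ref{lemma:isflag} guarantees that $X$ is flag, whence Proposition \ref{lemma:vertexset_bound} (with $d=1$) gives $C(X) \le \lfloor |U|/2 \rfloor \le \lfloor (n-1)\Delta/2 \rfloor - |B|$, and the peeling closes the induction.

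The hard part is that, without claw-freeness, $X$ need not be flag: Lemma \ref{lemma:isflag} relies on every vertex of $V \setminus A$ having at most two neighbors in $A$, and a vertex of $A$-degree $\ge 3$ can produce missing faces of $X$ of size larger than $2$ (the counting in the proof of Lemma \ref{lemma:isflag} only yields $2k-2 \le (\text{edges from } \tau \text{ to } A)$, and this no longer forces $k \le 2$ once high-degree vertices contribute to the independent set witnessing $\tau$). Directly replacing the flag bound by Proposition \ref{lemma:vertexset_bound} for missing faces of dimension $d \ge 2$ is too weak, since $\lfloor d|U|/(d+1)\rfloor$ exceeds $(n-1)\Delta/2$ in general. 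The point that must be made to work is that precisely the vertices forcing large missing faces are those of high $A$-degree, and each such vertex contributes surplus $d_A(w)-2 \ge 1$ to the degree budget; the goal is to show this surplus exactly compensates the loss of flagness.

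Concretely, I would attempt a secondary induction that removes the high-$A$-degree vertices one at a time before invoking the flag machinery. Given $w$ with $d_A(w) \ge 3$, Lemma \ref{lemma:tancer} gives $C(\lk(I_n(G),A)) \le \max\{C(\lk(I_n(G \setminus w), A)),\, C(\lk(I_n(G), A \cup \{w\})) + 1\}$; the first term is handled by the outer induction on $|V|$ (note $G \setminus w$ still has maximum degree at most $\Delta$), so everything reduces to bounding $C(\lk(I_n(G), A \cup \{w\}))$ by $\lfloor(n-1)\Delta/2\rfloor - 1$. Here $A \cup \{w\}$ is no longer independent, and the main obstacle is that this link does not collapse into a single instance of smaller $n$: an independent set of size $n$ in $A \cup \{w\} \cup \sigma$ either avoids $w$ (recovering the condition defining $\lk(I_n(G),A)$) or uses $w$ together with an independent set of size $n-1$ in $(A \setminus N_G(w)) \cup (\sigma \setminus N_G(w))$. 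Making the surplus $d_A(w)-2$ rigorously pay for the extra ``$+1$'' across this two-branch structure --- ideally via a common refinement of the orders used in Lemmas \ref{lemma:induction_step2} and \ref{lemma:tancer} --- is the step where a genuinely new idea seems to be required, and is the reason the statement remains a conjecture.
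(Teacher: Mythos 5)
The statement you were asked to prove is not proved anywhere in the paper: it is stated verbatim as Conjecture~\ref{conjecture:link} in Section~\ref{sec:open} and left open, with only the claw-free special case established (Proposition~\ref{prop:clawfree}). So there is no proof of the paper's to compare against, and your proposal --- by its own explicit admission in the final paragraph --- is not a proof either. That said, your diagnosis of the situation is accurate and matches the paper's own structure. The reduction to $V=A\cup N_G(A)$, the double-counting inequality $|U|+2|B|\leq\sum_{v\in A}|N_G(v)|\leq(n-1)\Delta$ (which indeed survives without claw-freeness, with surplus $\sum_{w\in B}(d_A(w)-2)$), and the peeling of $B$ via Lemma~\ref{lemma:induction_step2} are all correct and faithfully reproduce the proof of Proposition~\ref{prop:clawfree}. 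You also locate the obstruction precisely: Lemma~\ref{lemma:isflag} is the unique place where claw-freeness enters, through the hypothesis that every vertex outside $A$ has at most two neighbors in $A$, and your observation that the counting $2k-2\leq k$ in its proof breaks once vertices of $A$-degree at least $3$ can appear in the witnessing independent set is exactly right. Likewise, your remark that substituting Proposition~\ref{lemma:vertexset_bound} with $d\geq 2$ is too lossy (since $\lfloor d|U|/(d+1)\rfloor$ exceeds $\lfloor|U|/2\rfloor$) correctly rules out the naive repair.

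The genuine gap is the one you name yourself: the secondary induction that peels off a vertex $w$ with $d_A(w)\geq 3$ via Lemma~\ref{lemma:tancer} reduces everything to showing $C(\lk(I_n(G),A\cup\{w\}))\leq\lfloor(n-1)\Delta/2\rfloor-1$, and you give no argument for this bound. The two-branch description of that link (independent sets avoiding $w$ versus those using $w$ together with $n-1$ independent vertices avoiding $N_G(w)$) is correct but is only a restatement of the difficulty, not a mechanism by which the surplus $d_A(w)-2$ pays for the extra ``$+1$''; note in particular that $A\cup\{w\}$ is no longer independent, so the inductive hypothesis (which is formulated for independent $A$ of size $n-1$) does not even apply to this link, and some strengthened statement would be needed to close the loop. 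One further point worth keeping in mind for any future attempt: the dodecahedral counterexamples of Section~\ref{sec:lowerbound} show that $C(I_n(G))$ itself can exceed the analogous bound for $\Delta=3$, so any proof of Conjecture~\ref{conjecture:link} must genuinely exploit the presence of the size-$(n-1)$ independent set $A$ in the link, rather than deducing the bound from global collapsibility of $I_n(G)$; your proposal is at least consistent with this constraint, which is a point in its favor as a program, even though the decisive idea is still missing.
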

For the subclass of claw-free graphs, this is proved in Proposition \ref{prop:clawfree}. Conjecture \ref{conjecture:link} would imply the bound $f_G(n)\leq\left\lfloor\left(\frac{\Delta}{2}+1\right)(n-1)\right\rfloor+1$ (by the same argument as the one used to prove Theorem \ref{maincor}), settling Conjecture \ref{mainconj} in the case of even $\Delta$.

Another possible direction is to focus on the family of claw-free bounded degree graphs. We showed in Theorem \ref{maincor} that Conjecture \ref{mainconj} holds for graphs in this family when $\Delta$ is even. In the case of odd $\Delta$, although we obtain good upper bounds for $f_G(n)$, the question remains unsettled. It would also be interesting to prove the corresponding tight upper bound on the collapsibility number of $I_n(G)$, at least for the case of even $\Delta$.

We know, by Proposition \ref{claim:10,2}, that Conjecture \ref{mainconj_collapsibility_version} does not hold for graphs with maximum degree at most $3$. The following problem arises:
\begin{problem}
	Find the smallest positive integer $g(n)$ such that the following holds: for every graph $G$ with maximum degree at most $3$, \[C(I_n(G)) \leq g(n).\]
\end{problem}
By Theorem \ref{cor:bounded degree} and Proposition \ref{prop:extremalex} we have $2(n-1)\leq g(n)\leq 3(n-1)$ for all $n\geq 1$, and, by Corollary \ref{cor:dodecahedral}, $g(8k)\geq 17k-1$ for all $k\geq 1$. Improving either the upper or lower bounds for $g(n)$ may be of interest.

\section*{Acknowledgment}
The authors thank Jinha Kim for her insightful comments during the early stages of this research.

\bibliographystyle{abbrv}
\bibliography{biblio}

\begin{thebibliography}{10}

\bibitem{adamaszek2014extremal}
M.~Adamaszek.
\newblock Extremal problems related to betti numbers of flag complexes.
\newblock {\em Discrete Appl. Math.}, 173:8--15, 2014.

\bibitem{aharoni2009rainbow}
R.~Aharoni and E.~Berger.
\newblock Rainbow matchings in $ r $-partite $ r $-graphs.
\newblock {\em Electron. J. Combin.}, 16(1):R119, 2009.

\bibitem{aharoni2016large}
R.~Aharoni, E.~Berger, M.~Chudnovsky, D.~Howard, and P.~Seymour.
\newblock Large rainbow matchings in general graphs.
\newblock {\em European J. Combin.}, 79:222 -- 227, 2019.

\bibitem{ABKK}
R.~Aharoni, J.~Briggs, J.~Kim, and M.~Kim.
\newblock Rainbow independent sets in certain classes of graphs.
\newblock preprint, \url{https://arxiv.org/abs/1909.13143}, 2019.

\bibitem{aharoni2018fractional}
R.~Aharoni, R.~Holzman, and Z.~Jiang.
\newblock Rainbow fractional matchings.
\newblock {\em Combinatorica}, 2019.

\bibitem{barat2017rainbow}
J.~Bar{\'a}t, A.~Gy{\'a}rf{\'a}s, and G.~N. S{\'a}rk{\"o}zy.
\newblock Rainbow matchings in bipartite multigraphs.
\newblock {\em Period. Math. Hungar.}, 74(1):108--111, 2017.

\bibitem{bjorner1995topological}
A.~Bj{\"o}rner.
\newblock Topological methods.
\newblock In R.~Graham, M.~Gr{\"o}tschel, and L.~Lov{\'a}sz, editors, {\em
  Handbook of combinatorics}, chapter~34, pages 1819--1872. North-Holland,
  Amsterdam, 1994.

\bibitem{bjorner2009note}
A.~Bj{\"o}rner and M.~Tancer.
\newblock Note: Combinatorial {A}lexander duality-- a short and elementary
  proof.
\newblock {\em Discrete Comput. Geom.}, 42(4):586, 2009.

\bibitem{briggs2019choice}
J.~Briggs and M.~Kim.
\newblock Choice functions in the intersection of matroids.
\newblock {\em Electron. J. Combin.}, 26(4):P4.26, 2019.

\bibitem{brooks1941colouring}
R.~L. Brooks.
\newblock On colouring the nodes of a network.
\newblock {\em Proc. Cambridge Philos. Soc.}, 37:194--197, 1941.

\bibitem{KM2005}
G.~Kalai and R.~Meshulam.
\newblock A topological colorful {H}elly theorem.
\newblock {\em Adv. Math.}, 191(2):305--311, 2005.

\bibitem{khmel}
I.~Khmelnitsky.
\newblock $d$-collapsibility and its applications.
\newblock Master's thesis, Technion, Haifa, 2018.

\bibitem{lekkeikerker1962representation}
C.~G. Lekkerkerker and J.~C. Boland.
\newblock Representation of a finite graph by a set of intervals on the real
  line.
\newblock {\em Fund. Math.}, 51(1):45--64, 1962.

\bibitem{tancer2011strong}
M.~Tancer.
\newblock Strong $d$-collapsibility.
\newblock {\em Contrib. Discrete Math.}, 6(2):32--35, 2011.

\bibitem{watkins1969theorem}
M.~E. Watkins.
\newblock A theorem on {T}ait colorings with an application to the generalized
  {P}etersen graphs.
\newblock {\em J. Combin. Theory}, 6(2):152--164, 1969.

\bibitem{Weg75}
G.~Wegner.
\newblock $d$-collapsing and nerves of families of convex sets.
\newblock {\em Arch. Math.}, 26(1):317--321, 1975.

\end{thebibliography}

\end{document}